\documentclass[reqno,12pt]{amsart}
\usepackage[T1]{fontenc}
\usepackage[utf8]{inputenc}
\usepackage[english]{babel}
\usepackage{amsmath,amssymb,amsthm,mathtools}
\usepackage{booktabs,array}
\usepackage[a4paper,margin=2.6cm,headheight=15pt]{geometry}
\usepackage[hidelinks]{hyperref}
\newtheorem{theorem}{Theorem}[section]
\newtheorem{lemma}[theorem]{Lemma}
\newtheorem{proposition}[theorem]{Proposition}
\newtheorem{corollary}[theorem]{Corollary}
\newtheorem{definition}[theorem]{Definition}
\theoremstyle{remark}
\newtheorem{remark}[theorem]{Remark}
\newcommand{\PP}{\mathbb P}
\newcommand{\EE}{\mathbb E}
\newcommand{\Var}{\operatorname{Var}}
\newcommand{\Cov}{\operatorname{Cov}}
\newcommand{\FP}{F_{\mathrm{Pit}}}
\newcommand{\R}{\mathcal R}
\begin{document}
\title{A Pittel-type limit law for the largest component of random
$d$-uniform hypergraphs: enumeration and excursions}
\author{Wirdane Youssouf}
\address{Universit\'e des Comores, Centre Universitaire de Patsy,
Anjouan, Union des Comores}
\email{wirdane.youssouf@univ-comores.com}
\thanks{ORCID: 0009-0003-2432-9361.}
\subjclass[2020]{Primary 05C80; Secondary 60C05, 05C65.}
\keywords{random hypergraphs, phase transition, largest component,
Pittel law, functional invariance principle, excursions,
analytic combinatorics}
\begin{abstract}
Let $H_d(n,M)$ be a $d$-uniform hypergraph with $n$ vertices and $M$
hyperedges. In the critical window
$M=\frac{n}{d(d-1)}(1+\lambda n^{-1/3})+O(1)$, its largest component
$L_n$ satisfies
\[
 n^{-2/3}L_n\xrightarrow{\mathcal L}
 \FP\bigl((d-1)^{1/3}a,\lambda(d-1)^{-2/3}\bigr).
\]
(Here $\FP(x,\mu)$ denotes the limiting law of the largest component of
the critical random graph established by Pittel~\cite{Pit01}.)
The proof rests on two pillars.

First, a bivariate exponential formula gives the exact finite-size law
of $L_n$. Subsequently, cancellation of the orders $n,n^{2/3},n^{1/3}$
in the tree normalization determines the series of rooted hypertrees,
then that of unicyclic components and its singular expansions at the
radius of convergence $\rho_d$. These calculations are exact; they
serve the fixed-excess identities and the cancellation mechanism, and
nothing more: no unproved uniformity in the excess enters the proof of
the limit law.

Next, by exploration and martingale approximation, the ordered sizes
converge unconditionally to the excursion lengths of a Brownian motion
with parabolic drift. A monotone coupling transfers the law to
$H_d(n,M)$. The limit law is then identified with Pittel's law by a
Brownian rescaling: if $h=d-1$, it suffices to multiply lengths by
$h^{-1/3}$ and evaluate at parameter $\lambda h^{-2/3}$.
The exact Poisson representation dispenses with any separate
deconditioning.

We also formulate a two-regime $d$-uniform Bollob\'as bound
(Wright for fixed $\ell$, global combinatorial beyond that), and
explicitly indicate why neither regime alone suffices to close the
analytic route.
\end{abstract}
\maketitle
\markboth{Wirdane Youssouf}{A Pittel-type limit law}

\section{Model and scope}
Fix $d\ge2$ and put $N=\binom nd$. The model $H_d(n,M)$ is uniform
over the simple $d$-uniform hypergraphs on $[n]$ with exactly $M$ edges.
We consider any integer sequence satisfying
\begin{equation}\label{eq:window}
 M=\frac{n}{d(d-1)}(1+\lambda n^{-1/3})+O(1),\qquad\lambda\in\mathbb R.
\end{equation}
Let $L_n$ be the size of the largest component. The associated branching
process has mean offspring $(d-1)dM/n=1+\lambda n^{-1/3}+O(n^{-1})$;
this is why~\eqref{eq:window} defines the critical window. The
$n^{2/3}$ scale is the object of the analysis below.

\begin{theorem}[Main theorem]\label{thm:main}
For every fixed integer $d\ge2$, every $\lambda\in\mathbb R$ and every
integer sequence $M=\frac{n}{d(d-1)}(1+\lambda n^{-1/3})+O(1)$,
if $L_n$ denotes the size of the largest component of $H_d(n,M)$,
then, at every continuity point $a>0$ of the distribution function,
\[
 \lim_{n\to\infty}\PP\{n^{-2/3}L_n\le a\}
 =\FP\bigl((d-1)^{1/3}a,\lambda(d-1)^{-2/3}\bigr),
\]
where $\FP(x,\mu)$ is the limiting law of the largest block of the
critical random graph identified by Pittel~\cite{Pit01}.
\end{theorem}
This result is proved in Corollary~\ref{cor:main} by the probabilistic
route, without any hypothesis on hypergraph kernels.

Two routes are developed, with distinct logical statuses.
Sections 2--6 establish the exact identities, the fixed-excess
expansions and the cancellation mechanism, without deriving a transfer
uniform in the excess. Section 7 proves unconditionally the convergence
of excursions, then their transfer to $H_d(n,M)$. Section 8 then
identifies the limit law with Pittel's law by Brownian self-similarity.
For a connected $d$-uniform hypergraph with $v$ vertices and $e$ edges,
we use the usual excess
\begin{equation}\label{eq:excess}
 \ell=(d-1)e-v.
\end{equation}
A hypertree, that is, a connected hypergraph whose incidence graph is
acyclic, satisfies $v=1+(d-1)e$ and has excess $-1$. In particular,
the identity $e=v+\ell$ is valid only for $d=2$. Complex components
have excess $\ell\ge1$.

\subsection{Pittel's integral formula (JCTB, 2001) and its $d$-uniform adaptation}
For $a>0$, the analytic starting point is the integral representation
published by Boris Pittel in the \emph{Journal of Combinatorial Theory,
Series B}, volume 82 (2001), pages 237--269, in particular his
equation (3.25)~\cite{Pit01}. The expression below is the $d$-uniform
target form proposed in the present work, and not a new attribution
of Pittel's graph formula:
\begin{equation}\label{eq:cont-target}
 F_d^{\mathrm{cont}}(a,\lambda)
 =A_d(a,\lambda)\int_{-\infty}^{\infty}
       \exp\{H_{d,a,\lambda}(v^*+iw)\}\,dw,
\end{equation}
where
\[
 H_{d,a,\lambda}(z)=(\beta-\lambda)z+I_d(z),
 \qquad\beta=-\frac{2}{\sqrt{2\pi a}},
\]
and
\begin{equation}\label{eq:Id}
 I_d(z)=\int_0^a\left[
 \frac1{\sqrt{2\pi}}\left(\frac{e^{zy}-1-zy}{y^{5/2}}+g_d(y)e^{zy}\right)
 +\frac{e^{zy}-1}{4y}\right]\,dy .
\end{equation}
Here $v^*$ is a real saddle point and $g_d$ encodes complex components
by excess. The amplitude constants and the coefficients of $g_d$
must be determined from the exact bivariate formula and uniform
estimates on complex components.

More precisely, any analytic use of~\eqref{eq:Id} assumes that the
series defining $g_d$ converges normally on every compact subset of
the integration interval under consideration. This property does not
follow from positivity of the kernel coefficients alone. It must
follow from a bound on the complete normalized term of the form
$\le CR^\ell\ell^{-\ell/2}$ (or, equivalently,
$\le CR^\ell/\Gamma(\ell/2+1)$), such as~\eqref{eq:excess-bound}.
Until this is verified, $g_d$ is notation for the contour route and
not an ingredient of the probabilistic proof.

\begin{remark}[Comparison of the two representations]\label{rem:representations}
If $A_d$ and $g_d$ are determined by a complete contour analysis,
the identity to be verified is
\[
 F_d^{\mathrm{cont}}(a,\lambda)
 =\FP\bigl((d-1)^{1/3}a,\lambda(d-1)^{-2/3}\bigr)\,?
\]
For $d=2$, any identification with the published formula must retain
the constants, amplitude and contour fixed in~\cite{Pit01}.
For $d>2$, the functions $A_d$ and $g_d$ constitute an adaptation to
be derived from the hypergraph classes; they are not attributed to
Pittel. The probabilistic route proved below identifies the limiting
distribution function with the right-hand side without going
through~\eqref{eq:cont-target}. Remark~\ref{rem:representations} only
specifies the equality that an autonomous analytic derivation should
reproduce; it plays no part in any proof in this article.
\end{remark}

\section{Exact laws from the exponential formula}
Let $C_d(v,e)$ be the number of simple, connected, $d$-uniform
hypergraphs on a fixed labelled set of $v$ vertices and with exactly
$e$ edges. Define the finite polynomial
\begin{equation}\label{eq:C}
 C_{d,m}(x,z)=\sum_{v=1}^m\frac{x^v}{v!}
                 \sum_{e=0}^{\binom vd} C_d(v,e)z^e,
\end{equation}
where $C_{d,m}(x,z)$ denotes the bivariate exponential generating
function of connected $d$-uniform hypergraphs, restricted to
components of size at most $m$. We include the isolated vertex:
$C_d(1,0)=1$.

\begin{theorem}[Exact coefficient formula with a fixed number of edges]\label{thm:exact}
For $1\le m\le n$ and $0\le M\le N$,
\begin{equation}\label{eq:exact}
 \PP\{L_n\le m\}=\frac{n!}{\binom NM}[x^nz^M]\exp(C_{d,m}(x,z)).
\end{equation}
The same expression is exactly $1$ when $m=n$.
\end{theorem}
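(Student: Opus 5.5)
The plan is a direct counting argument via the exponential formula for labelled structures. Since $H_d(n,M)$ is uniform over the $\binom NM$ simple $d$-uniform hypergraphs on $[n]$ with exactly $M$ edges, we have
\[
 \PP\{L_n\le m\}=\binom NM^{-1}\#\{G:\ |V(G)|=n,\ |E(G)|=M,\ \text{every component has at most } m \text{ vertices}\}.
\]
It therefore suffices to show that this count equals $n![x^nz^M]\exp(C_{d,m}(x,z))$.

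First, every simple $d$-uniform hypergraph on a labelled set $[n]$ decomposes uniquely into its connected components. These are connected simple hypergraphs on the blocks of a set partition of $[n]$, and isolated vertices are the singleton blocks counted by $C_d(1,0)=1$. Conversely, any choice of a set partition together with a connected simple hypergraph on each block yields a simple hypergraph on $[n]$. No simplicity or distinctness issues arise across blocks, because edges lie inside blocks and edges in different blocks are automatically distinct. The number of edges is additive over components, and the constraint ``all components have size $\le m$'' is a constraint on each block separately.

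Second, I apply the labelled exponential formula in its bivariate form. Weight a connected component on $v$ vertices with $e$ edges by $z^e$; the resulting class has EGF $C_{d,m}(x,z)$ when restricted to $v\le m$. Sets of such components then have EGF $\exp(C_{d,m}(x,z))$, so $n![x^nz^M]\exp(C_{d,m})$ counts exactly the hypergraphs on $[n]$ with $M$ edges and all components of size $\le m$. Because $C_{d,m}$ is a polynomial without constant term, the exponential is a well-defined formal power series and the coefficient extraction is a finite sum over partitions. This gives \eqref{eq:exact}. The main, and only mildly delicate, step is making the bijection between hypergraphs and (partition, connected structure per block) precise with the $z$-marking. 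I would do this by writing the coefficient explicitly as
\[
 \sum_k \frac{1}{k!}\sum_{v_1+\dots+v_k=n}\binom{n}{v_1,\dots,v_k}\prod_i C_d(v_i,e_i)
\]
with $\sum e_i=M$, where the $1/k!$ accounts for the unordered blocks.

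Finally, for $m=n$ the restriction is vacuous since no component exceeds $n$ vertices. The count is then all simple $d$-uniform hypergraphs on $[n]$ with $M$ edges, namely $\binom NM$, and the expression equals $1$. Equivalently, $\exp(C_{d,n})$ agrees with $\sum_n \frac{x^n}{n!}(1+z)^{\binom nd}$ in all coefficients of $x^n$.
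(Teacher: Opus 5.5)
Your proposal is correct and follows essentially the same route as the paper's proof: unique decomposition into connected components, the bivariate exponential formula with $z$ marking the additive edge count and the restriction $v\le m$ imposed blockwise, and the case $m=n$ making the numerator equal to $\binom NM$. Your explicit multinomial expansion with the $1/k!$ factor simply spells out the exponential formula that the paper invokes directly.
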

\begin{proof}
Decompose each labelled hypergraph uniquely into its connected
components. The exponential formula counts partitions of the label
set, while $z$ marks the additive number of edges. The restriction
$v\le m$ is applied component by component. There are $\binom NM$
unrestricted hypergraphs with $M$ edges, which establishes the
quotient. For $m=n$, every hypergraph is admissible, so the numerator
is $\binom NM$.
\end{proof}
\begin{remark}[Combinatorial reading]
The coefficient $[x^nz^M]\exp(C_{d,m}(x,z))$, multiplied by $n!$,
counts the labelled $d$-uniform hypergraphs on $[n]$ with exactly $M$
edges and with each connected component containing at most $m$
vertices. Thus $\PP\{L_n\le m\}$ is simply the proportion of such
hypergraphs among the $\binom NM$ possible hypergraphs on $[n]$.

This reading reveals the following gap: for hypertrees ($\ell=-1$),
the formula $C_d(v,e)=(v-1)!v^{e-1}/[e!((d-1)!)^e]$ is exact
(Proposition~\ref{prop:trees}). For excesses $\ell\ge1$, closed formulas
analogous to Wright's~\cite{Wri77} in the case $d=2$ do not yet exist
for $d\ge3$; this is precisely the gap circumvented by the
probabilistic route of Section 7, which dispenses with any
fixed-excess analytic expansion.
\end{remark}
\begin{remark}[Independent-edge model]\label{rem:binomial}
In $H_d(n,p)$, where each of the $N$ possible edges is independently
present, put $z=p/(1-p)$. Then
\[
 \PP_p\{L_n\le m\}=(1-p)^Nn![x^n]\exp(C_{d,m}(x,z)).
\]
The factor $z^e$ marks the total number of edges, not just the excess:
replacing it by $\mu^{(d-1)e-v}$ without simultaneously tracking the
number of edges and the vertex-dependent factors does not give an
exact formula.
\end{remark}
For the subsequent analysis, one can group~\eqref{eq:C} by excess:
\begin{equation}\label{eq:excess-group}
 C_{d,m}(x,z)=\sum_{\ell\ge-1}
 \sum_{\substack{1\le v\le m\\v+\ell\equiv0\pmod{d-1}}}
 C_d\left(v,\frac{v+\ell}{d-1}\right)\frac{x^v}{v!}z^{(v+\ell)/(d-1)}.
\end{equation}
For fixed $m$, this sum has only finitely many terms. The congruence
is central to singularity analysis when $d>2$.
The notation $C_d(v,(v+\ell)/(d-1))$ is the arithmetically correct
expression for $C_d(v,e)$ when the excess $\ell=(d-1)e-v$ is fixed;
for $d=2$, it coincides with $C_d(v,v+\ell)$.

\subsection{Positioning with respect to previous work}
The notions of excess, hypertree, unicyclic component and kernel are
classical~\cite{AR05,KL97,Rav10,BR16}. Fixed-excess asymptotics and
the structure near the birth of complex components are known in
more general weighted models~\cite{dP15}. This article does not
claim novelty for them.

Its contribution is the organization of these elements through the
exact formula~\eqref{eq:exact}, the rescaling of
Theorem~\ref{thm:rescale}, and the explicit cancellation of
Theorem~\ref{thm:cancel}. The contour integral for the largest component
requires estimates uniform in the excess that go beyond this
framework; the probabilistic route does not need them.
The critical window for hypergraphs goes back
to~\cite{SS85,KL02}; see also~\cite{BJR07,Rio12} for the phase
transition in more general models (inhomogeneous random graphs and
the configuration model) where similar techniques apply.
The excursion description relies
on~\cite{Ald97,LPW94,NP07,JS07}; the metric structure of the
critical random graph at the same scale is studied in~\cite{ABG12}.
For enumeration of connected
hypergraphs, see~\cite{AR05,Gol05}.

\subsection{Conditioning the independent-edge model}
Let $E$ denote the total number of edges of $H_d(n,p)$. For every
event $A$ and every $p\in(0,1)$,
\begin{equation}\label{eq:conditioning}
 \PP\{H_d(n,M)\in A\}=\PP_p\{H_d(n,p)\in A\mid E=M\}.
\end{equation}
In the independent-edge model, every hypergraph with $M$ edges has
the same probability $p^M(1-p)^{N-M}$. Hence the identity.
Heuristics from the binomial model locate the critical window, but
coefficient extraction in~\eqref{eq:exact} remains necessary for the
exact law.

\subsection{Exact Cauchy representation and normalization}
For any radii $r,s>0$, identity~\eqref{eq:exact} gives
\begin{equation}\label{eq:cauchy}
 \PP\{L_n\le m\}=\frac{n!}{\binom NM}\frac1{(2\pi i)^2}
 \oint_{|x|=r}\oint_{|z|=s}
  \exp(C_{d,m}(x,z))\frac{dz\,dx}{x^{n+1}z^{M+1}}.
\end{equation}
The integrand is entire before coefficient extraction: the radii
may be chosen freely, without any singularity of the finite
polynomial~\eqref{eq:C}. A singular object appears only in the limit
$m\to\infty$ or through resummation of complex components.
The case $m=n$ of Theorem~\ref{thm:exact} ensures the exact
normalization for every saddle-point calculation.

\subsection{Exact Poisson representation of the bivariate coefficient}
The two-variable saddle point has a direct probabilistic
interpretation. It reduces the local contour problem to a lattice
local limit theorem, and gives the covariance matrix without formal
manipulation.

\begin{theorem}[Poissonization identity]\label{thm:poisson}
Fix $m,r,s>0$. For each admissible pair $(v,e)$ with $1\le v\le m$,
let $Z_{v,e}$ be independent Poisson variables with means
\[
 \theta_{v,e}=\frac{C_d(v,e)}{v!}r^vs^e,
\]
and put $S_V=\sum_{v,e}vZ_{v,e}$ and $S_E=\sum_{v,e}eZ_{v,e}$.
Then
\begin{equation}\label{eq:poisson}
 [x^nz^M]e^{C_{d,m}(x,z)}
 =e^{C_{d,m}(r,s)}r^{-n}s^{-M}\PP\{S_V=n,S_E=M\}.
\end{equation}
Consequently,
\[
 \EE(S_V,S_E)=(r\partial_r C_{d,m}(r,s),s\partial_s C_{d,m}(r,s)),
\]
and the covariance matrix is
\begin{equation}\label{eq:Sigma}
 \Sigma_m(r,s)=\sum_{v,e}\theta_{v,e}
      \begin{pmatrix}v^2&ve\\ve&e^2\end{pmatrix}.
\end{equation}
Thus the positive saddle equations are exactly the centring
equations $\EE S_V=n$ and $\EE S_E=M$.
\end{theorem}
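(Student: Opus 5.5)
The plan is to compute the joint probability generating function of $(S_V,S_E)$ directly and compare it with $e^{C_{d,m}}$. Since the $Z_{v,e}$ are independent Poisson variables and only finitely many pairs $(v,e)$ occur ($1\le v\le m$, $0\le e\le\binom vd$), we have, for complex $x,z$,
\[
 \EE\bigl[x^{S_V}z^{S_E}\bigr]=\prod_{v,e}\exp\bigl(\theta_{v,e}(x^vz^e-1)\bigr)
 =\exp\Bigl(\sum_{v,e}\frac{C_d(v,e)}{v!}(rx)^v(sz)^e\Bigr)e^{-C_{d,m}(r,s)},
\]
that is, $\EE[x^{S_V}z^{S_E}]=e^{C_{d,m}(rx,sz)-C_{d,m}(r,s)}$. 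This is an identity between a polynomial-exponential entire function and a power series with nonnegative coefficients $\PP\{S_V=a,S_E=b\}$. We extract the coefficient of $x^nz^M$ on both sides. On the left this gives $\PP\{S_V=n,S_E=M\}$. On the right, the substitution $x\mapsto rx$, $z\mapsto sz$ multiplies $[x^nz^M]$ by $r^ns^M$. Hence
\[
 \PP\{S_V=n,S_E=M\}=e^{-C_{d,m}(r,s)}r^ns^M[x^nz^M]e^{C_{d,m}(x,z)},
\]
which rearranges to~\eqref{eq:poisson}. The only care needed is that the pairs with $\theta_{v,e}=0$ (those with $C_d(v,e)=0$) give degenerate Poisson variables. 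They contribute factor $1$ to both sides, so nothing changes.

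The moments follow the same way. The cumulant generating function of $(S_V,S_E)$ at $(t,u)$ is $K(t,u)=C_{d,m}(re^t,se^u)-C_{d,m}(r,s)$, obtained by setting $x=e^t$, $z=e^u$. Differentiating at $0$ gives $\EE S_V=r\partial_rC_{d,m}(r,s)$ and $\EE S_E=s\partial_sC_{d,m}(r,s)$. Alternatively, sum $v\theta_{v,e}$ and $e\theta_{v,e}$ directly, since each $\EE Z_{v,e}=\theta_{v,e}$. For the covariance we use independence and $\Var Z_{v,e}=\theta_{v,e}$. Then
\[
 \Cov(S_V,S_E)=\sum_{v,e}ve\,\Var Z_{v,e}=\sum_{v,e}ve\,\theta_{v,e},
\]
and similarly for the diagonal entries, which is~\eqref{eq:Sigma}. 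It equals the Hessian of $K$ at $0$.

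For the last sentence, we compare with the saddle-point equations. The positive saddle point of the integrand $e^{C_{d,m}(x,z)}x^{-n}z^{-M}$ in~\eqref{eq:cauchy} is defined by the vanishing of the logarithmic derivatives in $\log x$ and $\log z$. These are $r\partial_rC_{d,m}(r,s)=n$ and $s\partial_sC_{d,m}(r,s)=M$, which are exactly $\EE S_V=n$ and $\EE S_E=M$ by the formulas just obtained.

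No step is a genuine obstacle. The one point worth stating explicitly is the legitimacy of coefficient extraction: both sides are entire in $(x,z)$ because the sum is finite, and the probabilities are the Taylor coefficients of the generating function. The identity therefore holds coefficientwise, with no convergence issue.
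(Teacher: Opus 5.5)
Your proposal is correct and follows essentially the same route as the paper. The paper also writes the joint probability generating function as a product of Poisson generating functions, equal to $e^{-C_{d,m}(r,s)}\exp\{C_{d,m}(ru,sw)\}$, extracts the coefficient of $u^nw^M$, and differentiates the logarithm to obtain the means and the covariance matrix. Your remarks on degenerate Poisson variables and on the legitimacy of coefficient extraction are harmless refinements.
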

\begin{proof}
The joint probability generating function is
\[
 \EE[u^{S_V}w^{S_E}]
 =\prod_{v,e}\exp\{\theta_{v,e}(u^vw^e-1)\}
 =e^{-C_{d,m}(r,s)}\exp\{C_{d,m}(ru,sw)\}.
\]
Extracting $u^nw^M$ proves~\eqref{eq:poisson}.
Differentiating the logarithm of the joint generating function gives
the expectation and~\eqref{eq:Sigma}.
\end{proof}

\begin{lemma}[Exact angular damping]\label{lem:angular}
For $x=re^{i\alpha}$ and $z=se^{i\beta}$,
\begin{equation}\label{eq:damping}
 |e^{C_{d,m}(x,z)}|=e^{C_{d,m}(r,s)}
       \exp\{-D_m(\alpha,\beta;r,s)\}.
\end{equation}
\[
 D_m(\alpha,\beta;r,s)=\sum_{v,e}\theta_{v,e}
          (1-\cos(v\alpha+e\beta))\ge0.
\]
Near the origin,
$D_m(\alpha,\beta;r,s)=\frac12(\alpha,\beta)\Sigma_m(r,s)
(\alpha,\beta)^{\mathsf T}+R_m$, where
\[
 |R_m|\le\frac1{24}\sum_{v,e}\theta_{v,e}|v\alpha+e\beta|^4.
\]
More precisely, if $\lambda_{\min}(\Sigma_m)>0$ and, on a
neighbourhood $U$ of the origin,
\[
 \sum_{v,e}\theta_{v,e}|v\alpha+e\beta|^4
 \le6\lambda_{\min}(\Sigma_m)(\alpha^2+\beta^2),
\]
then, uniformly on $U$,
\[
 D_m(\alpha,\beta;r,s)\ge
 \frac14\lambda_{\min}(\Sigma_m)(\alpha^2+\beta^2).
\]
\end{lemma}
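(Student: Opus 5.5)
The plan is to expand $C_{d,m}(x,z)$ termwise, since it is a finite polynomial. With $x=re^{i\alpha}$ and $z=se^{i\beta}$,
\[
 C_{d,m}(x,z)=\sum_{v,e}\theta_{v,e}e^{i(v\alpha+e\beta)},
\qquad
 \operatorname{Re}C_{d,m}(x,z)=\sum_{v,e}\theta_{v,e}\cos(v\alpha+e\beta).
\]
Using $|e^w|=e^{\operatorname{Re}w}$ and $C_{d,m}(r,s)=\sum_{v,e}\theta_{v,e}$, we get $\operatorname{Re}C_{d,m}(x,z)=C_{d,m}(r,s)-D_m$, which is~\eqref{eq:damping}. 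Nonnegativity of $D_m$ is immediate: each $\theta_{v,e}\ge0$ because $C_d(v,e)\ge0$ and $r,s>0$, and $1-\cos\ge0$. The sum is finite, so no convergence issues arise.

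For the local expansion, put $t=v\alpha+e\beta$. We use the elementary Taylor inequality
\[
 0\le 1-\cos t-\tfrac{t^2}{2}+\tfrac{t^4}{24}
 \quad\text{and}\quad
 \bigl|1-\cos t-\tfrac{t^2}{2}\bigr|\le\tfrac{t^4}{24},
\]
valid for all real $t$. It follows from the alternating Taylor series, or from integrating $\sin u\le u$ repeatedly. Summing with weights $\theta_{v,e}$ gives the quadratic term
\[
 \tfrac12\sum_{v,e}\theta_{v,e}(v\alpha+e\beta)^2
 =\tfrac12(\alpha,\beta)\,\Sigma_m(r,s)\,(\alpha,\beta)^{\mathsf T},
\]
since $(v\alpha+e\beta)^2=(\alpha,\beta)\bigl(\begin{smallmatrix}v^2&ve\\ ve&e^2\end{smallmatrix}\bigr)(\alpha,\beta)^{\mathsf T}$. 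This matches~\eqref{eq:Sigma} of Theorem~\ref{thm:poisson}. The remainder bound $|R_m|\le\frac1{24}\sum\theta_{v,e}|v\alpha+e\beta|^4$ is then the summed Taylor inequality.

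For the final lower bound, combine $Q:=\frac12(\alpha,\beta)\Sigma_m(\alpha,\beta)^{\mathsf T}\ge\frac12\lambda_{\min}(\Sigma_m)(\alpha^2+\beta^2)$ (Rayleigh quotient) with $D_m\ge Q-|R_m|$. Under the hypothesis,
\[
 |R_m|\le\tfrac1{24}\cdot6\lambda_{\min}(\Sigma_m)(\alpha^2+\beta^2)=\tfrac14\lambda_{\min}(\Sigma_m)(\alpha^2+\beta^2).
\]
Hence
\[
 D_m\ge\left(\tfrac12-\tfrac14\right)\lambda_{\min}(\Sigma_m)(\alpha^2+\beta^2)=\tfrac14\lambda_{\min}(\Sigma_m)(\alpha^2+\beta^2),
\]
uniformly on $U$.

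There is no serious obstacle. The only points needing care are the sign and constant in the cosine Taylor remainder, and checking that the hypothesis constant $6$ yields exactly the factor $\frac14$. Alternatively, one could use the one-sided inequality $1-\cos t\ge t^2/2-t^4/24$ directly, which avoids the absolute value entirely.
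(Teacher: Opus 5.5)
Your proof is correct and takes the paper's route: take the real part of the finite polynomial term by term, then sum the inequality $|1-\cos t-t^2/2|\le t^4/24$. You also write out the Rayleigh-quotient step that yields the constant $\tfrac14$, which the paper leaves implicit.
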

\begin{proof}
Take the real part of $C_{d,m}(re^{i\alpha},se^{i\beta})$ term by
term and use $\Re e^{it}=\cos t$. The remainder bound follows from
$|1-\cos t-t^2/2|\le |t|^4/24$.
\end{proof}

\begin{proposition}[Exact Poisson quotient]\label{prop:quotient}
For the same radii $r,s>0$, let $\PP_m$ be the Poisson law of
Theorem~\ref{thm:poisson} constructed from components of size at most
$m$, and $\PP_n$ the one constructed without truncation. Then
\begin{equation}\label{eq:quotient}
 \PP\{L_n\le m\}=e^{C_{d,m}(r,s)-C_{d,n}(r,s)}
 \frac{\PP_m\{S_V=n,S_E=M\}}{\PP_n\{S_V=n,S_E=M\}}.
\end{equation}
This identity is valid for all $r,s>0$ and does not require them
to solve the saddle equations.
\end{proposition}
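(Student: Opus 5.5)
The plan is to combine the exact coefficient formula of Theorem~\ref{thm:exact} with the Poissonization identity~\eqref{eq:poisson}. The identity will be applied twice at the same radii $(r,s)$: once for the truncated polynomial $C_{d,m}$ and once for the untruncated $C_{d,n}$. No saddle-point choice is needed, since~\eqref{eq:poisson} holds for arbitrary $r,s>0$.

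\emph{Step 1: the numerator.} Theorem~\ref{thm:poisson} applied with truncation level $m$ gives
\[
 [x^nz^M]e^{C_{d,m}(x,z)}=e^{C_{d,m}(r,s)}r^{-n}s^{-M}\PP_m\{S_V=n,S_E=M\}.
\]
Substituting this into~\eqref{eq:exact} yields
\[
 \PP\{L_n\le m\}=\frac{n!}{\binom NM}\,e^{C_{d,m}(r,s)}r^{-n}s^{-M}\PP_m\{S_V=n,S_E=M\}.
\]

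\emph{Step 2: the normalizing constant.} Apply the same identity with $m=n$. The case $m=n$ of Theorem~\ref{thm:exact} says the probability equals $1$, so
\[
 \binom NM=n!\,e^{C_{d,n}(r,s)}r^{-n}s^{-M}\PP_n\{S_V=n,S_E=M\}.
\]
In particular $\PP_n\{S_V=n,S_E=M\}>0$ whenever $\binom NM>0$, that is, for $0\le M\le N$. The quotient is therefore well defined.

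\emph{Step 3: divide.} Dividing the expression from Step~1 by the one from Step~2, the factors $n!$ and $r^{-n}s^{-M}$ cancel. What remains is exactly~\eqref{eq:quotient}.

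The only point needing care is Step~2. One must check that $C_{d,n}$ really corresponds to the untruncated count for hypergraphs on $n$ vertices. This holds because no component can exceed $n$ vertices, so the restriction $v\le n$ is vacuous for coefficient extraction at $x^n$; this is precisely the content of the case $m=n$ in Theorem~\ref{thm:exact}. Independence from the saddle equations is automatic, since $r$ and $s$ entered only through the generating-function identity.
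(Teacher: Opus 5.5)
Your proof is correct and is the paper's argument: you apply the Poissonization identity~\eqref{eq:poisson} to the numerator of~\eqref{eq:exact} and again to the case $m=n$, where the probability is one, and the factors $n!\,r^{-n}s^{-M}/\binom NM$ cancel. You also note that the case $m=n$ forces $\PP_n\{S_V=n,S_E=M\}>0$ for $0\le M\le N$, so the quotient is well defined; the paper leaves this point implicit.
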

\begin{proof}
Apply~\eqref{eq:poisson} to the numerator of~\eqref{eq:exact},
then to the case $m=n$, where the probability is exactly one.
The factors $n!r^{-n}s^{-M}/\binom NM$ cancel in the quotient
and give~\eqref{eq:quotient}.
\end{proof}

\begin{proposition}[Aperiodicity and outer minor arcs]\label{prop:outer}
Suppose the critical radii satisfy
\begin{equation}\label{eq:radii}
 r_n\asymp n,\qquad s_n\asymp n^{-(d-1)}.
\end{equation}
There is then $c>0$ such that, uniformly in $m\ge d$ and
$(\alpha,\beta)\in[-\pi,\pi]^2$,
\begin{equation}\label{eq:outer}
 D_m(\alpha,\beta;r_n,s_n)\ge
 cn\bigl(\|\alpha\|_{2\pi}^2+\|d\alpha+\beta\|_{2\pi}^2\bigr),
\end{equation}
where $\|u\|_{2\pi}$ is the distance from $u$ to $2\pi\mathbb Z$.
In particular, the origin is the unique maximum of the integrand's
modulus on the torus, and the contribution of the region
\[
 \max\{\|\alpha\|_{2\pi},\|d\alpha+\beta\|_{2\pi}\}\ge\delta_n
\]
is at most $O(e^{-cn\delta_n^2})$ times the modulus at the positive
point. Thus it is negligible as soon as
$n\delta_n^2-C\log n\to\infty$ for a constant $C>0$ depending only on $d$.
\end{proposition}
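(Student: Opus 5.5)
Since $D_m$ is a sum of nonnegative terms $\theta_{v,e}(1-\cos(v\alpha+e\beta))$, I will discard all but two carefully chosen families of terms, each with total weight of order $n$ under~\eqref{eq:radii}, and use only components of size $\le d$, so the bound is automatically uniform in $m\ge d$. The two families are the isolated vertex $(v,e)=(1,0)$ and the single edge $(v,e)=(d,1)$. Both belong to every truncation with $m\ge d$, and $C_d(1,0)=C_d(d,1)=1$.

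\textbf{Step 1 (isolated vertices).} The isolated vertex gives
\[
\theta_{1,0}(1-\cos\alpha)=r_n(1-\cos\alpha).
\]

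\textbf{Step 2 (single edges).} The single edge gives
\[
\theta_{d,1}(1-\cos(d\alpha+\beta))=\frac{r_n^d s_n}{d!}\bigl(1-\cos(d\alpha+\beta)\bigr).
\]
Under~\eqref{eq:radii}, $r_n^d s_n\asymp n^d n^{-(d-1)}=n$, so both weights are $\asymp n$. I would state explicitly that this is where the scaling~\eqref{eq:radii} enters. It is also consistent with the saddle equations of Theorem~\ref{thm:poisson}: $\EE S_V=n$ forces $r_n\lesssim n$, and the edge count $M\asymp n$ is carried mostly by small tree components, of mean $\asymp r^d s$.

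\textbf{Step 3 (periodic bound).} I then use the elementary inequality
\[
1-\cos t\ge \frac{2}{\pi^2}\|t\|_{2\pi}^2,
\]
valid for all real $t$ by periodicity and concavity of $\sin$ on $[0,\pi/2]$. Since $\cos(t)$ depends only on $t$ modulo $2\pi$, the distance $\|\cdot\|_{2\pi}$ is the right quantity. This yields~\eqref{eq:outer} with
\[
c=\frac{2}{\pi^2}\min\Bigl(\inf_n r_n/n,\ \inf_n r_n^ds_n/(d!\,n)\Bigr)>0.
\]

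\textbf{Step 4 (consequences).} For uniqueness of the maximum, note that $D_m=0$ forces $\|\alpha\|_{2\pi}=0$ and $\|d\alpha+\beta\|_{2\pi}=0$. Hence $\alpha\equiv0$ and $\beta\equiv0$ modulo $2\pi$, i.e.\ the origin on the torus, and by~\eqref{eq:damping} this is the unique maximum of the modulus.

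For the outer region, by~\eqref{eq:damping} the integrand modulus there is at most $e^{C_{d,m}(r,s)}e^{-cn\delta_n^2}$. Integrating over a torus of area $4\pi^2$ gives $O(e^{-cn\delta_n^2})$ times the value at the positive point.

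To compare with the main term I will use the local limit heuristic from Theorem~\ref{thm:poisson}. The central contribution is of order $(\det\Sigma_m)^{-1/2}$, which is at least polynomially small, say $n^{-C}$ with $C$ depending on $d$ through $\Sigma_m$ entries bounded by polynomial powers of $n$. Hence the outer part is negligible once $n\delta_n^2-C\log n\to\infty$.

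\textbf{Main obstacle.} The delicate point is the change of variables from $(\alpha,\beta)$ to $(\alpha,d\alpha+\beta)$ on the torus. It is unimodular (integer matrix with determinant $1$), so it maps $[-\pi,\pi]^2$ onto a fundamental domain and the distance $\|\cdot\|_{2\pi}$ is preserved. I would check this carefully, together with making the polynomial lower bound on the central term precise enough to justify the $C\log n$ threshold.
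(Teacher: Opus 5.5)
Your proposal is correct and is essentially the paper's own proof. You keep only the isolated-vertex term $(1,0)$ and the single-edge term $(d,1)$ of $D_m$, whose weights $r_n$ and $r_n^ds_n/d!$ are both $\asymp n$, then apply $1-\cos t\ge c_1\|t\|_{2\pi}^2$, deduce uniqueness of the maximum from the simultaneous vanishing of the two phases (the paper's determinant-one remark), and bound the outer region by the exact damping identity and the finite area of the torus. The paper likewise asserts the $C\log n$ negligibility threshold without proving a polynomial lower bound on the central term.
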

\begin{proof}
Among the permitted components are the isolated vertex $(v,e)=(1,0)$
and the hypergraph consisting of a single hyperedge $(v,e)=(d,1)$.
Their Poisson means are respectively
\[
 \theta_{1,0}=r_n\asymp n,\qquad
 \theta_{d,1}=\frac{r_n^ds_n}{d!}\asymp n.
\]
Keeping only these two terms in~\eqref{eq:damping} gives
\[
 D_m\ge c_0n\{1-\cos\alpha+1-\cos(d\alpha+\beta)\}.
\]
The inequality $1-\cos u\ge c_1\|u\|_{2\pi}^2$ gives~\eqref{eq:outer}.
The vectors $(1,0)$ and $(d,1)$ generate $\mathbb Z^2$, since their
determinant is one; the two phases therefore vanish simultaneously
on the torus only at $(0,0)$. Finally, the integrated contribution
estimate follows from~\eqref{eq:damping}, since the torus has finite area.
\end{proof}
Theorem~\ref{thm:poisson} and Lemma~\ref{lem:angular} remove two
ambiguities in the contour argument. If $\Sigma_m$ is nonsingular
and the local central limit theorem holds, the Gaussian amplitude
is $(2\pi\sqrt{\det\Sigma_m})^{-1}$, corrected for the lattice span.
Proposition~\ref{prop:outer} shows that this span is one and closes
all arcs outside a neighbourhood $\delta_n\gg n^{-1/2}$ of the
positive point. The remaining critical domain is the critical
neighbourhood of the saddle; it requires a uniform local theorem
for $m\asymp n^{2/3}$ and control of the contributions of complex
components to $\Sigma_m$ and to the nonquadratic terms of the phase.

\subsection{A rigorous criterion for closing the contour calculation}
The following result isolates sufficient conditions for an autonomous
analytic route, without choosing angular scales prematurely.
In particular, it avoids confusing pointwise convergence of the
phase with convergence of the integral.

For $k\in\{m,n\}$ put
\[
 \Psi_{k,n}(\theta)=C_{d,k}(r_ne^{i\theta_1},s_ne^{i\theta_2})
                  -C_{d,k}(r_n,s_n)-i(n\theta_1+M\theta_2),
 \quad\theta\in[-\pi,\pi]^2,
\]
and $I_{k,n}=\int_{[-\pi,\pi]^2}e^{\Psi_{k,n}(\theta)}\,d\theta$.
By Cauchy's formula and Proposition~\ref{prop:quotient},
\begin{equation}\label{eq:integral-quotient}
 \PP\{L_n\le m\}
 =e^{C_{d,m}(r_n,s_n)-C_{d,n}(r_n,s_n)}\frac{I_{m,n}}{I_{n,n}}.
\end{equation}

\begin{lemma}[Uniform dominated convergence with a parameter]\label{lem:udc}
Let $(X,\mathcal A,\mu)$ be a measure space, $K$ any set,
$f_{n,a},f_a:X\to\mathbb C$ measurable functions, and $H\in L^1(\mu)$.
Suppose there is a measurable set $X_0\subset X$ with negligible
complement such that
\[
 \sup_{a\in K}|f_{n,a}(x)-f_a(x)|\longrightarrow0\qquad(x\in X_0),
\]
and $|f_{n,a}(x)|\le H(x)$ for every $a\in K$, all sufficiently
large $n$, and almost every $x$. Then $|f_a|\le H$ almost everywhere
for every $a\in K$, and
\begin{equation}\label{eq:udc}
 \sup_{a\in K}\left|\int_X f_{n,a}\,d\mu-\int_X f_a\,d\mu\right|
 \longrightarrow0.
\end{equation}
\end{lemma}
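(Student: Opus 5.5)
The plan is a sequential-extraction argument. It avoids any measurability issue with $x\mapsto\sup_{a\in K}|f_{n,a}(x)-f_a(x)|$, which need not be measurable when $K$ is uncountable. Only countably many parameters are ever used at once, so only countably many exceptional null sets appear. I read ``for all sufficiently large $n$'' as a threshold $n_0$ independent of $a\in K$; the null set in ``almost every $x$'' may depend on $a$ and $n$.

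\emph{Step 1: the bound on $f_a$.} Fix $a\in K$. For each $n\ge n_0$ let $N_{n,a}$ be the null set off which $|f_{n,a}|\le H$, and put $N_a=\bigcup_{n\ge n_0}N_{n,a}\cup(X\setminus X_0)$, a countable union of null sets. For $x\notin N_a$ we have $|f_{n,a}(x)|\le H(x)$ for every $n\ge n_0$. The hypothesis also gives $|f_{n,a}(x)-f_a(x)|\le\sup_{b\in K}|f_{n,b}(x)-f_b(x)|\to0$, so $f_{n,a}(x)\to f_a(x)$. Passing to the limit yields $|f_a(x)|\le H(x)$. Hence $|f_a|\le H$ almost everywhere, and $f_a\in L^1(\mu)$ because it is measurable by assumption.

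\emph{Step 2: reduction to a sequence.} Suppose \eqref{eq:udc} fails. Then there are $\varepsilon>0$, indices $n_k\to\infty$ with $n_k\ge n_0$, and parameters $a_k\in K$ such that
\[
 \Bigl|\int_X f_{n_k,a_k}\,d\mu-\int_X f_{a_k}\,d\mu\Bigr|>\varepsilon\qquad(k\ge1).
\]
Set $h_k=|f_{n_k,a_k}-f_{a_k}|$, which is measurable as a difference of measurable functions.

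\emph{Step 3: dominated convergence along the sequence.} Off the countable union $N=\bigcup_k(N_{n_k,a_k}\cup N_{a_k})\cup(X\setminus X_0)$, which is null, two facts hold. First, $h_k\le 2H$ by the hypothesis and Step 1. Second,
\[
 h_k(x)\le\sup_{b\in K}|f_{n_k,b}(x)-f_b(x)|\longrightarrow0.
\]
The ordinary dominated convergence theorem therefore gives $\int_X h_k\,d\mu\to0$. Since $\bigl|\int_X f_{n_k,a_k}\,d\mu-\int_X f_{a_k}\,d\mu\bigr|\le\int_X h_k\,d\mu$, this contradicts the choice of $\varepsilon$, and \eqref{eq:udc} follows.

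The delicate point is the one the sequential argument is designed to handle. Pointwise uniform convergence in $a$ does not make the supremum measurable, so one cannot apply dominated convergence to the supremum directly. Extracting a single bad sequence $(n_k,a_k)$ turns the problem into a countable family of measurable functions, where the classical theorem applies. The uniformity of the threshold $n_0$ in $a$ is essential here: it guarantees that $n_k\ge n_0$ puts every $h_k$ under the domination $2H$.
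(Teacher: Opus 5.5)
Your proof is correct, but it takes a different route from the paper's.

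\textbf{The paper's route.} The paper applies dominated convergence once, to $g_n(x)=\sup_{a\in K}|f_{n,a}(x)-f_a(x)|$, using $0\le g_n\le 2H$ and $g_n\to0$ a.e. It then reads off $\sup_{a\in K}\bigl|\int_X(f_{n,a}-f_a)\,d\mu\bigr|\le\int_X g_n\,d\mu\to0$. Measurability of $g_n$ is handled by a side remark. In the application to Theorem~\ref{thm:contour}, the integrands are continuous in $a$ on each cell where $m_n(a)$ is constant, so the supremum can be taken over a countable dense set. Otherwise the paper passes to a measurable upper envelope. This is shorter and gives a single $a$-free majorant $\int g_n\,d\mu$ of the error. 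But it requires $g_n$ to be measurable and dominated by $2H$ off one null set.

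\textbf{Your route.} You extract one bad sequence $(n_k,a_k)$ and apply the classical theorem to the countably many measurable functions $h_k$. So you need neither measurability of the supremum nor exceptional sets independent of $a$, and you prove the lemma for arbitrary $K$ exactly as stated.

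\textbf{Why this is a real gain.} The envelope fallback is not automatic: pointwise convergence $g_n\to0$ does not force the measurable envelopes to tend to $0$. Let the $A_n$ partition $[0,1]$ into sets of full outer measure, and take $K=[0,1]$, $f_a=0$, $f_{n,a}=\mathbf 1_{\{a\}}$ for $a\in A_n$ and $f_{n,a}=0$ otherwise. Then $g_n=\mathbf 1_{A_n}\to0$ everywhere, yet its envelope is $1$ a.e.

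\textbf{The uniform threshold.} You are right that $n_0$ must be uniform in $a$; the paper's bound $g_n\le 2H$ uses this tacitly. Without it the statement fails. Take $X=(0,1)$, $K=\mathbb N$, $f_a=0$, and $f_{n,a}=n\mathbf 1_{(0,1/n)}$ when $n<a$, $f_{n,a}=0$ when $n\ge a$. All hypotheses hold with $a$-dependent thresholds, but $\sup_a\bigl|\int f_{n,a}\,d\mu\bigr|=1$ for every $n$.
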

\begin{proof}
On $X_0$, passing to the limit gives $|f_a(x)|\le H(x)$ for every
$a\in K$. Put
\[
 g_n(x)=\sup_{a\in K}|f_{n,a}(x)-f_a(x)|.
\]
In the application below, the functions depend continuously on the
parameter on each cell where $m_n(a)$ is constant; the supremum can
therefore be taken over a countable dense subset of each cell,
which ensures measurability of $g_n$. More generally, the supremum
can be replaced by its measurable upper envelope.
We have $g_n\to0$ almost everywhere and $0\le g_n\le2H$.
The dominated convergence theorem applied to $g_n$ gives
$\int_Xg_n\,d\mu\to0$. But
\[
 \sup_{a\in K}\left|\int_X(f_{n,a}-f_a)\,d\mu\right|
 \le\int_X\sup_{a\in K}|f_{n,a}-f_a|\,d\mu=\int_Xg_n\,d\mu,
\]
which proves~\eqref{eq:udc}.
\end{proof}

\begin{theorem}\label{thm:contour}
Let $K\Subset(0,\infty)$ and $m=m_n(a)=\lfloor an^{2/3}\rfloor$.
Suppose there exist invertible real matrices $B_n$, domains
$\Omega_n=B_n[-\pi,\pi]^2$ increasing to $\mathbb R^2$,
functions $\Phi_a,\Phi_\infty\in L^1(\mathbb R^2)$, and a function
$H\in L^1(\mathbb R^2)$ such that, uniformly in $a\in K$:
\begin{enumerate}
\item For almost every $u\in\mathbb R^2$, the following convergences
are uniform in $a\in K$ (the second does not depend on $a$):
\[
 \mathbf1_{\Omega_n}(u)e^{\Psi_{m_n(a),n}(B_n^{-1}u)}\to\Phi_a(u),
 \qquad
 \mathbf1_{\Omega_n}(u)e^{\Psi_{n,n}(B_n^{-1}u)}\to\Phi_\infty(u).
\]
\item Both moduli on the left-hand side are bounded by $H(u)$,
for all sufficiently large $n$ and all $a\in K$.
\item $J_\infty:=\int_{\mathbb R^2}\Phi_\infty(u)\,du\ne0$.
\item For a continuous function $E(a)$,
\[
 C_{d,m_n(a)}(r_n,s_n)-C_{d,n}(r_n,s_n)\to E(a)
\]
uniformly on $K$.
\end{enumerate}
Then, uniformly for $a\in K$,
\begin{equation}\label{eq:contour-limit}
 \PP\{L_n\le an^{2/3}\}\longrightarrow
 e^{E(a)}\frac{\int_{\mathbb R^2}\Phi_a(u)\,du}{J_\infty}.
\end{equation}
If the right-hand side is identified with
$\FP((d-1)^{1/3}a,\lambda(d-1)^{-2/3})$, the contour derivation
of~\eqref{eq:pittel} is complete.
\end{theorem}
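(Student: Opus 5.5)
We derive Theorem~\ref{thm:contour} directly from the integral quotient~\eqref{eq:integral-quotient}, using Lemma~\ref{lem:udc} for both the numerator and the denominator. Fix $a\in K$ and write $m=m_n(a)$. In $I_{k,n}$ substitute $\theta=B_n^{-1}u$, so that $d\theta=|\det B_n|^{-1}du$ and $\theta\in[-\pi,\pi]^2$ corresponds to $u\in\Omega_n$. Hence
\[
 I_{k,n}=|\det B_n|^{-1}\int_{\mathbb R^2}\mathbf1_{\Omega_n}(u)\,
 e^{\Psi_{k,n}(B_n^{-1}u)}\,du .
\]
The Jacobian factor is the same for $k=m$ and $k=n$, so it cancels in the quotient $I_{m,n}/I_{n,n}$. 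We are left with
\[
 \PP\{L_n\le m\}=e^{C_{d,m}(r_n,s_n)-C_{d,n}(r_n,s_n)}\,
 \frac{J_{m,n}}{J_{n,n}},\qquad
 J_{k,n}:=\int_{\mathbb R^2}\mathbf1_{\Omega_n}e^{\Psi_{k,n}(B_n^{-1}u)}\,du .
\]
Since $\{L_n\le an^{2/3}\}=\{L_n\le\lfloor an^{2/3}\rfloor\}$, this is exactly the probability in~\eqref{eq:contour-limit}.

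Next, apply Lemma~\ref{lem:udc} with $X=\mathbb R^2$ (Lebesgue measure), $f_{n,a}=\mathbf1_{\Omega_n}e^{\Psi_{m_n(a),n}(B_n^{-1}\cdot)}$ and $f_a=\Phi_a$. Hypothesis~(1) gives the almost-everywhere convergence uniform in $a$, and hypothesis~(2) gives the domination by $H$. The lemma then yields $\sup_{a\in K}|J_{m_n(a),n}-\int\Phi_a|\to0$. The measurability remark in its proof covers the supremum, since $m_n(a)$ is piecewise constant in $a$. The same argument with the parameter-free family gives $J_{n,n}\to J_\infty$. By~(3), $J_\infty\neq0$, so $|J_{n,n}|\ge|J_\infty|/2$ for large $n$. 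In addition, $\sup_a|\int\Phi_a|\le\|H\|_{L^1}$, because the lemma gives $|\Phi_a|\le H$.

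It remains to combine the three factors uniformly in $a$. The function $E$ is continuous on $K$. We may take $K$ compact, or else replace it by its closure, which is still contained in $(0,\infty)$; this is implicit in $K\Subset(0,\infty)$. Then $E$ is bounded on $K$. Hypothesis~(4) therefore gives $e^{C_{d,m}-C_{d,n}}\to e^{E(a)}$ uniformly, because $\exp$ is uniformly continuous on bounded real sets. Two quantities need care here. The difference $C_{d,m}(r_n,s_n)-C_{d,n}(r_n,s_n)$ is real and nonpositive, since every coefficient is nonnegative. The ratios converge uniformly by the elementary bound
\[
 \Bigl|\frac{x_n}{y_n}-\frac{x}{y}\Bigr|\le\frac{|x_n-x|}{|y_n|}+\frac{|x|\,|y-y_n|}{|y_n||y|},
\]
with $|x|\le\|H\|_1$ and $|y_n|,|y|\ge|J_\infty|/2$. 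A product of a uniformly bounded and uniformly convergent factor with another such factor converges uniformly, which gives~\eqref{eq:contour-limit}. The final sentence of the theorem is a definitional remark and needs no proof.

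The main obstacle is essentially bookkeeping rather than a deep estimate. The points to watch are the cancellation of the Jacobian, the use of $J_\infty\ne0$ to keep the denominator away from zero uniformly in $n$, and the measurability of the supremum in $a$ when invoking Lemma~\ref{lem:udc}. All the genuinely hard analysis is contained in hypotheses~(1)--(4), which the theorem assumes.
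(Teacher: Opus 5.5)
Your proposal is correct and follows the paper's proof essentially step for step. Both arguments use the change of variables $u=B_n\theta$ with cancellation of $|\det B_n|^{-1}$ in~\eqref{eq:integral-quotient}, and Lemma~\ref{lem:udc} for numerator and denominator. Both then use $|J_{n,n}|\ge|J_\infty|/2$ together with the same elementary quotient estimate. Finally, both get uniform convergence of $e^{E_n(a)}$ from boundedness of $E$ on the compact $K$; the paper invokes the mean value theorem where you use uniform continuity of $\exp$.
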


\begin{remark}\label{rem:domains}
The notation $\Omega_n=B_n[-\pi,\pi]^2$ denotes the set image of
the square $[-\pi,\pi]^2\subset\mathbb R^2$ under the linear map
$B_n$, that is,
\[
 \Omega_n:=\{B_nu:u\in[-\pi,\pi]^2\}.
\]
Since $B_n$ is invertible, $\Omega_n$ is a compact parallelogram
in $\mathbb R^2$ with nonempty interior. The condition that
$\Omega_n$ increases to $\mathbb R^2$ means that $\|B_n\|\to\infty$,
so that every fixed compact subset of $\mathbb R^2$ is eventually
contained in $\Omega_n$.
\end{remark}
\begin{proof}
For $u\in\mathbb R^2$ put
\[
 f_{n,a}(u)=\mathbf1_{\Omega_n}(u)e^{\Psi_{m_n(a),n}(B_n^{-1}u)},
 \qquad f_a(u)=\Phi_a(u),
\]
\[
 g_n(u)=\mathbf1_{\Omega_n}(u)e^{\Psi_{n,n}(B_n^{-1}u)},
 \qquad g(u)=\Phi_\infty(u).
\]
Define
\[
 N_n(a)=\int_{\mathbb R^2}f_{n,a}(u)\,du,\qquad
 N(a)=\int_{\mathbb R^2}\Phi_a(u)\,du,
\]
and
\[
 D_n=\int_{\mathbb R^2}g_n(u)\,du,\qquad
 D=J_\infty=\int_{\mathbb R^2}\Phi_\infty(u)\,du.
\]
\emph{First step: uniform convergence of the integrals.}
Hypotheses 1 and 2 and Lemma~\ref{lem:udc} give
\begin{equation}\label{eq:errors}
 \delta_n:=\sup_{a\in K}|N_n(a)-N(a)|\to0,\qquad
 \eta_n:=|D_n-D|\to0.
\end{equation}
It follows that
\begin{equation}\label{eq:bounds}
 \sup_{n\ge n_0}\sup_{a\in K}|N_n(a)|\le\|H\|_1,\qquad
 \sup_{a\in K}|N(a)|\le\|H\|_1.
\end{equation}
\emph{Second step: stability of the complex quotient.}
Since $D\ne0$, there is $n_1$ such that $|D_n|\ge|D|/2$ for
$n\ge n_1$. For these $n$,
\begin{equation}\label{eq:quotient-bound}
 \sup_{a\in K}\left|\frac{N_n(a)}{D_n}-\frac{N(a)}D\right|
 \le\frac{2\delta_n}{|D|}+\frac{2\|H\|_1\eta_n}{|D|^2}\to0.
\end{equation}
This estimate explicitly justifies passage to the quotient and
shows that no positivity of the complex integrals is required.

\emph{Third step: the exponential factor.} Put
\[
 E_n(a)=C_{d,m_n(a)}(r_n,s_n)-C_{d,n}(r_n,s_n).
\]
Hypothesis 4 gives $\varepsilon_n:=\sup_{a\in K}|E_n(a)-E(a)|\to0$.
Since $E$ is continuous on the compact set $K$, it is bounded.
For sufficiently large $n$, $|E_n(a)|\le R$ and $|E(a)|\le R$
uniformly in $a$. The mean value theorem applied to the real
exponential function then gives
\begin{equation}\label{eq:exp-bound}
 \sup_{a\in K}|e^{E_n(a)}-e^{E(a)}|\le e^R\varepsilon_n\to0.
\end{equation}
\emph{Fourth step: return to the contours.}
The change of variables $u=B_n\theta$ gives
\[
 I_{k,n}=|\det B_n|^{-1}
           \int_{\Omega_n}e^{\Psi_{k,n}(B_n^{-1}u)}\,du.
\]
The two Jacobian factors cancel in quotient~\eqref{eq:integral-quotient}.
Thus
\[
 \PP\{L_n\le an^{2/3}\}=e^{E_n(a)}\frac{N_n(a)}{D_n}.
\]
Bounds~\eqref{eq:quotient-bound} and~\eqref{eq:exp-bound},
together with~\eqref{eq:bounds}, imply
\[
 \sup_{a\in K}\left|
 e^{E_n(a)}\frac{N_n(a)}{D_n}-e^{E(a)}\frac{N(a)}D
 \right|\to0,
\]
which is exactly~\eqref{eq:contour-limit}. The last assertion is
a substitution of identities, not an additional convergence hypothesis.
\end{proof}

\begin{remark}
Proposition~\ref{prop:outer} already provides domination outside a
neighbourhood of the saddle. Theorem~\ref{thm:contour} is an
autonomous transfer theorem: all its analytic implications are
proved above. Applying it to a concrete family naturally requires
verification of its four hypotheses, in particular construction
of $B_n$ and local domination. These verifications are not used in
the probabilistic proof of Theorem~\ref{thm:main}; the present result
therefore introduces no logical dependence into the main proof.
\end{remark}

\section{Hypertrees and singularities}
Let $T_d(x)$ be the exponential generating function of labelled
$d$-uniform hypertrees rooted at a vertex. The root is incident to
an unordered set of hyperedges, each containing an unordered set
of $d-1$ rooted hypertrees. Thus
\begin{equation}\label{eq:T}
 T_d(x)=x\exp\left(\frac{T_d(x)^{d-1}}{(d-1)!}\right).
\end{equation}
Let $U_d(x)$ be the EGF of labelled unrooted hypertrees.

\begin{proposition}[Exact enumeration and EGF of hypertrees]\label{prop:trees}
For $v=1+(d-1)e$, the number of labelled $d$-uniform hypertrees
with $v$ vertices and $e$ edges is
\begin{equation}\label{eq:tree-count}
 C_d(v,e)=\frac{(v-1)!\,v^{e-1}}{e!((d-1)!)^e}.
\end{equation}
It is zero for other values of $v$ in the tree class. Hence
\begin{equation}\label{eq:U}
 U_d(x)=T_d(x)-\frac{d-1}{d!}T_d(x)^d
       =T_d(x)-\frac{T_d(x)^d}{d(d-2)!}.
\end{equation}
\end{proposition}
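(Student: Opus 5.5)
Three things have to be established: the rooted generating function $T_d$ has the explicit coefficients implied by \eqref{eq:tree-count}; the count \eqref{eq:tree-count} holds and vanishes off the line $v=1+(d-1)e$; and the dissymmetry relation \eqref{eq:U} holds. I will obtain all three from Lagrange inversion applied to \eqref{eq:T}, keeping track of edges with an auxiliary variable $z$.

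\textbf{Step 1: vanishing off the line.} In a connected hypergraph whose incidence graph is acyclic, that incidence graph is a tree with $v+e$ nodes and $de$ edges. Hence $de=v+e-1$, that is $v=1+(d-1)e$, which gives the vanishing statement.

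\textbf{Step 2: the rooted count.} I will refine \eqref{eq:T} to
\[
 T(x,z)=x\exp\bigl(zT^{d-1}/(d-1)!\bigr).
\]
The decomposition justifying this equation is the one described just before the proposition. The root is in bijection with an unordered set of incident hyperedges. Each hyperedge carries an unordered $(d-1)$-set of rooted subtrees on disjoint label sets, and acyclicity makes the decomposition unique. Lagrange inversion with $\phi(T)=\exp(zT^{d-1}/(d-1)!)$ gives
\[
 [x^v]T=\frac1v[T^{v-1}]\phi(T)^v=\frac1v\,\frac{(vz)^e}{e!\,((d-1)!)^e},\qquad v-1=(d-1)e.
\]
Multiplying by $v!$ gives $(v-1)!\,v^e/(e!((d-1)!)^e)$ rooted trees. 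Dividing by the $v$ choices of root gives \eqref{eq:tree-count}.

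\textbf{Step 3: the unrooted EGF via dissymmetry.} For a hypertree, nodes minus edges of the incidence tree equals $1$. That is,
\[
 \#\{\text{vertices}\}+\#\{\text{hyperedges}\}-\#\{\text{incidences}\}=1.
\]
Summing over all hypertrees with weights gives
\[
 U=(\text{vertex-rooted})+(\text{edge-rooted})-(\text{incidence-rooted}).
\]
Each of the three terms can be written in terms of $T$:
\begin{itemize}
\item Vertex-rooted trees contribute $T$.
\item A tree rooted at a hyperedge is an unordered $d$-set of rooted trees, contributing $T^d/d!$.
\item A tree rooted at a flag (a hyperedge with one distinguished vertex) consists of a rooted tree at the distinguished vertex together with the other $d-1$ branches. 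The correct count here is $T^d/(d-1)!$: one distinguished rooted tree times $T^{d-1}/(d-1)!$ for the rest.
\end{itemize}
Hence
\[
 U=T+\frac{T^d}{d!}-\frac{T^d}{(d-1)!}=T-\frac{(d-1)T^d}{d!},
\]
and $\frac{d-1}{d!}=\frac1{d(d-2)!}$ gives the second form in \eqref{eq:U}.

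\textbf{Step 4: coefficient check.} As a cross-check independent of the bijective argument, I will verify \eqref{eq:U} on coefficients. Using Step 2, I will compute $[x^v]T^d$ by Lagrange–Bürmann, $[x^v]T^d=\frac dv[T^{v-d}]\phi^v$, and compare with $\frac1v$ times the rooted count. Both sides reduce to $(v-1)!\,v^{e-1}/(e!((d-1)!)^e)$ after simplification.

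The main obstacle is the flag-rooted term in Step 3. Getting it wrong yields incorrect constants, so I will rely on the coefficient check in Step 4 as confirmation that the combinatorial bookkeeping is right.
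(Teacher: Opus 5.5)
Your proposal is correct. For \eqref{eq:tree-count} you follow the paper's route: Lagrange inversion on \eqref{eq:T}, then division by the $v$ choices of root. Your edge marker $z$ is harmless, since on a hypertree $e=(v-1)/(d-1)$ is determined by $v$.

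For \eqref{eq:U} the two routes differ. The paper uses only the rooting relation $xU_d'(x)=T_d(x)$ and integrates it with the substitution $x=t\exp(-t^{d-1}/(d-1)!)$. This gives $dx/x=(1/t-t^{d-2}/(d-2)!)\,dt$ and hence $U_d=\int_0^{T_d}(1-t^{d-1}/(d-2)!)\,dt$. That is a two-line analytic computation, but it gives no structural reading of the correction term.

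Your dissymmetry argument rests on $v+e-de=1$ for the incidence tree. It reads the correction term combinatorially as edge-rooted minus flag-rooted hypertrees, $T^d/d!-T^d/(d-1)!$. It is the exact hypergraph analogue of $U=T-T^2/2$ for Cayley trees. It also carries the edge marker directly, giving $U(x,z)=T-z\frac{d-1}{d!}T^d$.

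Your Step 4 is more than a check. Since \eqref{eq:tree-count} determines $U_d$ completely, it already proves \eqref{eq:U} by itself. The computation $[x^v]T_d^d=\frac{d}{v}\frac{v^{e-1}}{(e-1)!((d-1)!)^{e-1}}$, combined with $v-(d-1)e=1$, gives $[x^v]\bigl(T_d-\frac{d-1}{d!}T_d^d\bigr)=\frac{v^{e-2}}{e!((d-1)!)^e}=C_d(v,e)/v!$. This is independent of both your flag-rooting bookkeeping and the paper's integration.
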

\begin{proof}
Lagrange inversion applied to~\eqref{eq:T} gives
$[x^v]T_d(x)=v^{e-1}/[e!((d-1)!)^e]$ when $v-1=(d-1)e$.
Each unrooted hypertree has exactly $v$ possible roots; dividing
the labelled rooted count $v![x^v]T_d(x)$ by $v$
proves~\eqref{eq:tree-count}. For $d=2$, the formula reduces to
$v^{v-2}$, the classical Cayley formula~\cite{KL97}; for $d\ge3$,
it appears in an equivalent form in~\cite{AR05}.
Since $xU'_d(x)=T_d(x)$, substitution of
$x=t\exp(-t^{d-1}/(d-1)!)$ followed by integration from $0$ to $t$
gives~\eqref{eq:U}.
\end{proof}
For $d=2$, \eqref{eq:tree-count} becomes $C_2(v,v-1)=v^{v-2}$,
in agreement with Cayley's formula. This exact check rules out an
erroneous asymptotic of order $v!e^v$ without a negative power of $v$.

\begin{proposition}[Puiseux expansion on the positive axis]\label{prop:puiseux}
Put
\[
 \tau_d=((d-2)!)^{1/(d-1)},\qquad
 \rho_d=\tau_de^{-1/(d-1)},\qquad
 \sigma_d=\tau_d\sqrt{\frac2{d-1}}.
\]
As $x\uparrow\rho_d$ through real values,
\begin{align}
 T_d(x)&=\tau_d-\sigma_d\sqrt{1-x/\rho_d}+O(1-x/\rho_d),
       \label{eq:Tsing}\\
 U_d(x)&=\frac{d-1}{d}\tau_d-\tau_d(1-x/\rho_d)
                  +O((1-x/\rho_d)^{3/2}).\label{eq:Using}
\end{align}
For $d>2$, other singularities of $T_d$ are obtained by rotating
$\rho_d$; coefficient transfer must account for this periodicity.
\end{proposition}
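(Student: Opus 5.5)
The plan is to treat \eqref{eq:T} as an implicit equation $x=\varphi(t)$ with $\varphi(t)=t\exp\{-t^{d-1}/(d-1)!\}$, locate the square-root singularity at the first critical point of $\varphi$ on the positive axis, and then transport the expansion to $U_d$ through \eqref{eq:U}. Throughout, write $\varepsilon=1-x/\rho_d$.

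\emph{Step 1: the critical point.} Since $(\log\varphi)'(t)=1/t-t^{d-2}/(d-2)!$, the unique positive critical point satisfies $t^{d-1}=(d-2)!$, that is, $t=\tau_d$. At this point
\[
 \varphi(\tau_d)=\tau_d\exp\{-(d-2)!/(d-1)!\}=\tau_d e^{-1/(d-1)}=\rho_d .
\]
On $[0,\tau_d)$ the map $\varphi$ is increasing from $0$ to $\rho_d$. The power series $T_d$ has nonnegative coefficients and $T_d(0)=0$, so on $[0,\rho_d)$ it is the increasing inverse branch, with values in $[0,\tau_d)$. By Pringsheim's theorem its radius of convergence is $\rho_d$, because the analytic implicit function theorem applies at every point $t<\tau_d$, where $\varphi'(t)\neq0$.

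\emph{Step 2: local inversion.} Differentiating $\log\varphi$ once more and using $\tau_d^{d-1}=(d-2)!$ gives
\[
 (\log\varphi)''(\tau_d)=-\frac{1}{\tau_d^2}-\frac{(d-2)\tau_d^{d-3}}{(d-2)!}=-\frac{d-1}{\tau_d^2}.
\]
Since $(\log\varphi)'(\tau_d)=0$, this yields $\varphi''(\tau_d)=-(d-1)\rho_d/\tau_d^2$. Hence $1-\varphi(t)/\rho_d=h(t)^2$, where
\[
 h(t)=\sqrt{\tfrac{d-1}{2}}\,\frac{\tau_d-t}{\tau_d}\bigl(1+O(\tau_d-t)\bigr)
\]
is analytic at $\tau_d$ with $h'(\tau_d)\neq0$, and the sign is chosen so that $h>0$ for $t<\tau_d$. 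Inverting $h$ analytically gives
\[
 \tau_d-t=\tau_d\sqrt{2/(d-1)}\,\sqrt{\varepsilon}+c_1\varepsilon+O(\varepsilon^{3/2})
\]
on the branch selected in Step 1. This is \eqref{eq:Tsing} with $\sigma_d=\tau_d\sqrt{2/(d-1)}$. In fact $T_d$ is an analytic function of $\sqrt{\varepsilon}$ near $\varepsilon=0$.

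\emph{Step 3: the unrooted series.} Put $u(t)=t-t^d/(d(d-2)!)$. Then
\[
 u'(t)=1-\frac{t^{d-1}}{(d-2)!}\ \text{vanishes at }\tau_d,\qquad u(\tau_d)=\tau_d-\frac{\tau_d}{d}=\frac{d-1}{d}\tau_d,\qquad u''(\tau_d)=-\frac{d-1}{\tau_d}.
\]
A Taylor expansion gives $U_d=\frac{d-1}{d}\tau_d-\frac{d-1}{2\tau_d}(t-\tau_d)^2+O(|t-\tau_d|^3)$. From Step 2,
\[
 (t-\tau_d)^2=\sigma_d^2\varepsilon+O(\varepsilon^{3/2}),\qquad \sigma_d^2=\frac{2\tau_d^2}{d-1}.
\]
Substituting, the quadratic term becomes $-\tau_d\varepsilon$, and every remaining term is $O(\varepsilon^{3/2})$. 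This proves \eqref{eq:Using}. The vanishing of $u'(\tau_d)$ is what kills the $\sqrt{\varepsilon}$ term.

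\emph{Step 4: other singularities.} Write $T_d(x)=xF(x^{d-1})$. This form holds because $T_d$ has coefficients only at exponents $v\equiv1\pmod{d-1}$, by Proposition~\ref{prop:trees}. Then $T_d(\omega x)=\omega T_d(x)$ for every $\omega^{d-1}=1$, so each $\omega\rho_d$ is a singularity with the rotated expansion. To exclude any other singularity on $|x|=\rho_d$, I would argue as follows. The function $F$ has nonnegative coefficients, and they are nonzero at every power of $y=x^{d-1}$. By the triangle inequality, $|F(y)|<F(|y|)$ unless $y>0$. The implicit equation is nondegenerate away from the point where $T_d$ reaches the value $\tau_d\omega$. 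This aperiodicity check is the only delicate point of the proof; the rest is routine computation.
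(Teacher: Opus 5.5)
Your proposal is correct. For $T_d$ it is the paper's proof: the critical point $\tau_d$ of $\varphi(t)=t\exp\{-t^{d-1}/(d-1)!\}$, the value $\varphi''(\tau_d)/\rho_d=-(d-1)/\tau_d^2$, and inversion on the increasing branch. As written, your Pringsheim step only gives radius $\ge\rho_d$. The reverse inequality follows from $\max_{t\ge0}\varphi(t)=\rho_d$, or from the square-root behaviour you derive in Step 2.

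For $U_d$ you take a different, equally valid route.
\begin{itemize}
\item The paper integrates $U_d'(x)=T_d(x)/x$ inward from $\rho_d$, starting from the boundary value $U_d(\rho_d)=(d-1)\tau_d/d$ given by \eqref{eq:U}.
\item You compose $u(t)=t-t^d/(d(d-2)!)$ with the local inversion, and note that $u'(\tau_d)=0$ kills the $\sqrt{\varepsilon}$ term.
\end{itemize}
These are the same fact seen twice: $u'(t)=t\varphi'(t)/\varphi(t)$ is exactly the chain-rule form of $dU_d/dx=T_d/x$. Your version needs nothing beyond the inversion already used for \eqref{eq:Tsing}. The paper's version gives the next term at no cost: integrating $-\sigma_d\sqrt{\varepsilon}$ yields $+\frac{2\sigma_d}{3}\varepsilon^{3/2}$, which is what Corollary~\ref{cor:tree-asym} uses. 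In your route that coefficient would require computing $c_1$ and $u'''(\tau_d)$.

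In Step 4 you go further than the paper, which only exhibits the rotated points $\omega\rho_d$ as singularities. Your exclusion sketch is the standard argument. To close it, state three things explicitly:
\begin{itemize}
\item $T_d$ extends continuously to $|x|\le\rho_d$, since its coefficients are nonnegative and $T_d(\rho_d)=\tau_d<\infty$.
\item The implicit equation degenerates only at values $w$ with $w^{d-1}=(d-2)!$, hence only where $|w|=\tau_d$.
\item For $|x_0|=\rho_d$ with $x_0^{d-1}\notin(0,\infty)$, the strict triangle inequality gives $|T_d(x_0)|<\tau_d$. The implicit function theorem then continues $T_d$ analytically across every such boundary point.
\end{itemize}
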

\begin{proof}
Let $x(t)=t\exp[-t^{d-1}/(d-1)!]$. Its positive critical point
satisfies $t^{d-1}=(d-2)!$, so $t=\tau_d$ and
$x(\tau_d)=\rho_d$. Direct differentiation gives $x'(\tau_d)=0$
and $x''(\tau_d)/\rho_d=-(d-1)/\tau_d^2$.
Taylor expansion and inversion on the increasing branch
establish~\eqref{eq:Tsing}. Formula~\eqref{eq:U} gives
$U_d(\rho_d)=(d-1)\tau_d/d$. Now $xU'_d(x)=T_d(x)$, hence
$\rho_dU'_d(\rho_d^-)=\tau_d$ and the linear term
$-\tau_d(1-x/\rho_d)$ in~\eqref{eq:Using}.
Integrating the square-root remainder in $U'_d(x)=T_d(x)/x$
gives the stated error. Finally, \eqref{eq:T} contains only powers
congruent to $1$ modulo $d-1$, producing the rotated singularities.
\end{proof}

\begin{remark}[Numerical constants]
For $d=2$, $(\tau_d,\rho_d,\sigma_d)=(1,e^{-1},\sqrt2)$.
For $d=3$, they are $(1,e^{-1/2},1)$.
For $d=4$, they are
$(2^{1/3},2^{1/3}e^{-1/3},2^{1/3}\sqrt{2/3})$.
The exponent $-1/3$ in $\rho_4$ is imposed by~\eqref{eq:T}.
\end{remark}

\begin{corollary}[Scale of the hypertree coefficients]\label{cor:tree-asym}
Along $v\equiv1\pmod{d-1}$, the number of unrooted hypertrees satisfies
\[
 C_d\left(v,\frac{v-1}{d-1}\right)
 \sim\frac{(d-1)\sigma_d}{2\sqrt\pi}\,v!\rho_d^{-v}v^{-5/2}.
\]
\end{corollary}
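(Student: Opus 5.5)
The plan is to derive the asymptotic directly from the exact count~\eqref{eq:tree-count} by Stirling's formula, rather than by singularity analysis, and afterwards to check the constant against the Puiseux data of Proposition~\ref{prop:puiseux}. Write $e=(v-1)/(d-1)$, which is an integer precisely along $v\equiv1\pmod{d-1}$, and $c=(d-1)!$. Proposition~\ref{prop:trees} gives
\[
 C_d(v,e)=\frac{(v-1)!\,v^{e-1}}{e!\,c^e}.
\]
I will keep $(v-1)!=v!/v$ exact and apply Stirling only to $e!\sim\sqrt{2\pi e}\,(e/\mathrm e)^e$, where $\mathrm e$ is Euler's number.

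First, combine $v^e$ with $e^{-e}$:
\[
 \Bigl(\frac{v}{e}\Bigr)^e=(d-1)^e\Bigl(1+\frac1{v-1}\Bigr)^{(v-1)/(d-1)}\sim (d-1)^e\,\mathrm e^{1/(d-1)}.
\]
Next, use $(d-1)/c=1/(d-2)!=\tau_d^{-(d-1)}$ to rewrite the geometric part:
\[
 \Bigl(\frac{\mathrm e\,(d-1)}{c}\Bigr)^e=\bigl(\mathrm e^{1/(d-1)}\tau_d^{-1}\bigr)^{(d-1)e}=\rho_d^{-(v-1)}.
\]
Collecting the factors gives
\[
 C_d(v,e)\sim \frac{v!}{v^{2}}\,\rho_d^{-v}\,\rho_d\,\mathrm e^{1/(d-1)}\,\frac{1}{\sqrt{2\pi v/(d-1)}}
 =\tau_d\sqrt{\frac{d-1}{2\pi}}\;v!\,\rho_d^{-v}v^{-5/2},
\]
using $\rho_d\,\mathrm e^{1/(d-1)}=\tau_d$ and $e\sim v/(d-1)$.

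The last step is to identify the constant. From $\sigma_d=\tau_d\sqrt{2/(d-1)}$,
\[
 \frac{(d-1)\sigma_d}{2\sqrt\pi}=\tau_d\sqrt{\frac{d-1}{2\pi}},
\]
which is exactly the constant obtained above.

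As a consistency check, I would also indicate the analytic route. Transfer~\eqref{eq:Tsing} at $\rho_d$ and at its $d-1$ rotations $\rho_d\omega^j$, with $\omega=\mathrm e^{2\pi i/(d-1)}$. Because $T_d(x)=x\,G(x^{d-1})$, the $d-1$ contributions add coherently when $v\equiv1\pmod{d-1}$, giving
\[
 [x^v]T_d\sim(d-1)\frac{\sigma_d}{2\sqrt\pi}\,\rho_d^{-v}v^{-3/2}.
\]
Multiplying by $v!/v$ to pass from rooted to unrooted trees reproduces the statement.

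The only delicate point is bookkeeping. One must track the $\mathrm e^{1/(d-1)}$ coming from $(1+1/(v-1))^e$ and the factor $\rho_d$ lost in the shift $v-1\to v$, and confirm that they recombine into $\tau_d$. In the analytic route, the corresponding point is the factor $d-1$ from the periodic singularities. Since the Stirling route is fully elementary, it is the one I would write out.
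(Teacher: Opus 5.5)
Your proposal is correct, and the constants match: $\tau_d\sqrt{(d-1)/(2\pi)}=(d-1)\sigma_d/(2\sqrt\pi)$ because $\sigma_d=\tau_d\sqrt{2/(d-1)}$. It does, however, invert the paper's logic.

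The paper's proof is analytic. It integrates $xU_d'(x)=T_d(x)$ against~\eqref{eq:Tsing} to obtain the singular term $\frac{2\sigma_d}{3}\varepsilon^{3/2}$ of $U_d$. It then transfers this term using $\Gamma(-3/2)=4\sqrt\pi/3$ and sums over the $d-1$ rotated dominant singularities. Stirling's formula applied to~\eqref{eq:tree-count} appears there only as a one-line confirmation.

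Your route uses nothing but the exact count of Proposition~\ref{prop:trees}, which rests on a purely formal Lagrange inversion. This buys you several things:
\begin{itemize}
\item It avoids the analytic inputs that the paper only sketches: that the rotations of $\rho_d$ are the only singularities on $|x|=\rho_d$, and that there is analytic continuation to a $\Delta$-domain at each of them.
\item It gives an explicit relative error $O(1/v)$.
\item The periodicity is automatic, because~\eqref{eq:tree-count} is supported only on the admissible class.
\end{itemize}

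The analytic route, in exchange, explains the constant through the Puiseux data and the factor $d-1$ through periodicity. It is also the mechanism the paper reuses for unicyclic components (Proposition~\ref{prop:V-asym}) and, conditionally, for complex components (Theorem~\ref{thm:wright}). There no closed formula exists, so Stirling's formula is not available.

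Your consistency check transfers the square-root singularity of $T_d$ and multiplies by $v!/v$. It is equivalent to the paper's computation through $U_d$ and simply skips the integration step.
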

\begin{proof}
From $xU'_d(x)=T_d(x)$ and~\eqref{eq:Tsing}, integration with
respect to $\varepsilon=1-x/\rho_d$ refines~\eqref{eq:Using} to
\[
 U_d(x)=\frac{d-1}{d}\tau_d-\tau_d\varepsilon
             +\frac{2\sigma_d}{3}\varepsilon^{3/2}+O(\varepsilon^2).
\]
Transfer of the singular term gives
$\sigma_d\rho_d^{-v}v^{-5/2}/(2\sqrt\pi)$, since
$\Gamma(-3/2)=4\sqrt\pi/3$. There are $d-1$ rotated singularities;
for $v\equiv1\pmod{d-1}$, their contributions add.
The exact formula~\eqref{eq:tree-count} independently confirms
the constant by Stirling's formula.
\end{proof}

\subsection{Table of tree constants}
The constants of Proposition~\ref{prop:puiseux}, in particular
$A_d^{\mathrm{arb}}=U_d(\rho_d)$ and the coefficient
$B_d^{\mathrm{arb}}$ of $(1-x/\rho_d)$ in $U_d(x)$, are as follows:
\begin{center}
\begin{tabular}{ccccc}
\toprule
$d$&$\tau_d$&$\rho_d$&$\sigma_d$&
$(A_d^{\mathrm{arb}},B_d^{\mathrm{arb}})$\\
\midrule
$2$&$1$&$e^{-1}$&$\sqrt2$&$(1/2,-1)$\\
$3$&$1$&$e^{-1/2}$&$1$&$(2/3,-1)$\\
$4$&$2^{1/3}$&$2^{1/3}e^{-1/3}$&$2^{1/3}\sqrt{2/3}$&
$(3\cdot2^{1/3}/4,-2^{1/3})$\\
\bottomrule
\end{tabular}
\end{center}
All entries follow from $A_d^{\mathrm{arb}}=(d-1)\tau_d/d$ and
$B_d^{\mathrm{arb}}=-\tau_d$. They can be checked without numerical
truncation of the generating functions.

\section{Unicyclic components}
For a connected $d$-uniform hypergraph, the bipartite incidence
graph has $v+e$ vertices and $de$ edges. Its cyclomatic number is
therefore $de-(v+e)+1=(d-1)e-v+1=\ell+1$.
Thus excess zero is equivalent to the presence of a unique cycle
in the incidence graph. Let $V_d(x)$ be the EGF of these unicyclic
components, without an edge marker.

\begin{theorem}[EGF of unicyclic components]\label{thm:unicyclic}
Put
\[
 Y_d(x)=\frac{T_d(x)^{d-1}}{(d-2)!}.
\]
Then
\begin{equation}\label{eq:V}
 V_d(x)=\frac12\left(\log\frac1{1-Y_d(x)}
                    -Y_d(x)-\frac{Y_d(x)^2}{2}\right)
               +\mathbf1_{\{d\ge3\}}\frac{Y_d(x)^2}{4}.
\end{equation}
For $d=2$, this is the classical series for graph cycles of length
at least three with attached rooted trees.
For $d\ge3$, it also includes a cycle formed by two distinct
$d$-edges with exactly two common vertices.
\end{theorem}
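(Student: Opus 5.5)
The plan is a symbolic decomposition of a connected component of excess $0$ along the unique cycle of its incidence graph. By the cyclomatic computation just before the statement, such a component has an incidence graph with exactly one cycle. That cycle alternates between vertices and hyperedges, so it has the form $v_1e_1v_2e_2\cdots v_ke_k$ with $k\ge1$. Here $e_i\ni v_i,v_{i+1}$ (indices mod $k$), the $v_i$ are distinct vertices and the $e_i$ are distinct hyperedges.

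The first step is to show that everything hangs off this cycle as rooted hypertrees, in a canonical way.
\begin{itemize}
\item Delete the $2k$ incidence edges of the cycle. Since the cycle is unique, what remains is a forest of the incidence graph.
\item Each cycle vertex $v_i$, together with everything reachable from it without using cycle hyperedges, is a $d$-uniform hypertree rooted at $v_i$. Its marker is $T_d(x)$ by~\eqref{eq:T}.
\item Each cycle hyperedge $e_i$ contains, besides $v_i,v_{i+1}$, exactly $d-2$ further vertices. These cannot lie on the cycle, since that would create a second incidence cycle. Each is the root of its own hanging hypertree, and these roots form an unordered set, with marker $T_d^{d-2}/(d-2)!$.
\end{itemize}
Conversely, gluing such data along a $k$-cycle gives a connected hypergraph whose incidence graph has exactly one cycle, hence excess $0$. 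The hanging trees have disjoint label sets, so the unions are disjoint.

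Grouping each cycle vertex with the hyperedge that follows it gives one unit with marker $T_d\cdot T_d^{d-2}/(d-2)!=Y_d$. A labelled rooted oriented cycle of $k$ units is therefore counted by $Y_d^k$. One then quotients by the dihedral group of order $2k$ (rotations and reversal of the cycle), which leaves the unrooted structure unchanged.

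The main point is to check that this group acts freely on labelled configurations, so that the EGF is exactly $Y_d^k/(2k)$, and to sort out which $k$ give simple hypergraphs.
\begin{itemize}
\item For $k=1$ one would need $e_1$ to contain $v_1$ twice, which is impossible.
\item For $k\ge3$, the edges $e_i$ are automatically distinct, since $e_i\cap\{v_1,\dots,v_k\}=\{v_i,v_{i+1}\}$ and these pairs differ. The action is free because the vertex labels on the cycle are distinct. This gives $\sum_{k\ge3}Y_d^k/(2k)=\frac12\bigl(\log\frac1{1-Y_d}-Y_d-\frac{Y_d^2}{2}\bigr)$.
\item For $k=2$, the two edges $e_1,e_2$ share exactly $v_1,v_2$ and otherwise carry disjoint hanging trees. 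When $d=2$ this is a double edge, excluded by simplicity. When $d\ge3$ the two edges are distinct as vertex sets, because each has at least one private non-cycle vertex. The group of order $4$ (swap $v_1\leftrightarrow v_2$, swap $e_1\leftrightarrow e_2$) then acts freely, contributing $\mathbf 1_{\{d\ge3\}}Y_d^2/4$.
\end{itemize}
Adding these contributions gives~\eqref{eq:V}. For $d=2$ one has $Y_2=T_2$, recovering the classical series for cycles of length $\ge3$ with attached rooted trees.

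The step I expect to need the most care is the freeness and simplicity bookkeeping in the small case $k=2$. There one must check that no nontrivial symmetry fixes a labelled configuration: the swap $e_1\leftrightarrow e_2$ exchanges the distinct, labelled private parts of the two edges, so it never fixes a labelled structure. One must also check that the canonical decomposition really is a bijection, i.e. that the extra $d-2$ vertices of a cycle edge can never coincide with another cycle vertex.
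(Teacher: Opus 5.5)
Your proposal is correct and follows the paper's proof: you decompose along the unique alternating incidence cycle into units, each a cycle vertex with its rooted hypertree together with the following hyperedge carrying $d-2$ hanging hypertrees (weight $Y_d$). You then divide by the dihedral factor $2k$ for $k\ge3$, and add the $k=2$ term $Y_d^2/4$ when $d\ge3$, which simplicity excludes for $d=2$; your explicit freeness check for $k=2$, using the disjoint nonempty private parts of $e_1,e_2$, simply makes precise the paper's ``symmetry factor four''.
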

\begin{proof}
The incidence graph of a unicyclic hypergraph has a unique
alternating cycle. For an incidence cycle of length $2j$, $j\ge3$,
each of its $j$ ordinary vertices is decorated by a rooted hypertree.
Each edge-vertex carries $d-2$ additional branches forming an
unordered set; each edge therefore contributes $T_d^{d-2}/(d-2)!$.
The cyclic order, up to rotations and reversal, has symmetry
factor $2j$. The EGF of these cycles is thus $Y_d^j/(2j)$, and
summing over $j\ge3$ gives the first term.
When $d\ge3$, a cycle of length four is possible: the two
hyperedges have two common vertices and each carries $d-2\ge1$
private branches. Its symmetry factor is four, hence $Y_d^2/4$.
For $d=2$, the two corresponding edges would be identical,
which is forbidden in the simple model.
Removing the cycle leaves rooted hypertrees, so the decomposition
is unique.
\end{proof}

\begin{proposition}[Singularity and unicyclic coefficients]\label{prop:V-asym}
As $x\uparrow\rho_d$ along the positive real axis,
\begin{equation}\label{eq:Vsing}
 V_d(x)=\frac14\log\frac1{1-x/\rho_d}
            +K_d^{(0)}+O\bigl(\sqrt{1-x/\rho_d}\bigr),
\end{equation}
where $K_d^{(0)}$ is a finite real constant.
As $v\to\infty$ along multiples of $d-1$,
\begin{equation}\label{eq:Vcoeff}
 C_d\left(v,\frac{v}{d-1}\right)
 =\frac{d-1}{4v}\,v!\rho_d^{-v}\bigl(1+O(v^{-1/2})\bigr).
\end{equation}
For other values of $v$, the left-hand side is zero.
\end{proposition}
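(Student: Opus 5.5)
The plan is to read off the singular expansion of $V_d$ from formula~\eqref{eq:V} by composing it with the Puiseux expansion~\eqref{eq:Tsing}, and then to transfer that expansion to coefficients by singularity analysis, summing over the $d-1$ rotated dominant singularities.

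\emph{Local expansion of $Y_d$.} Put $\varepsilon=1-x/\rho_d$. Since $\tau_d^{d-1}=(d-2)!$, we have $Y_d(\rho_d)=1$. Inserting~\eqref{eq:Tsing} into $Y_d=T_d^{d-1}/(d-2)!$ gives
\[
 Y_d(x)=1-(d-1)\frac{\sigma_d}{\tau_d}\sqrt\varepsilon+O(\varepsilon)
 =1-\sqrt{2(d-1)}\,\sqrt\varepsilon+O(\varepsilon),
\]
using $\sigma_d/\tau_d=\sqrt{2/(d-1)}$. Hence
\[
 \log\frac1{1-Y_d}=\frac12\log\frac1\varepsilon-\frac12\log\bigl(2(d-1)\bigr)+O(\sqrt\varepsilon).
\]
The polynomial terms in~\eqref{eq:V} are $-Y_d/2-Y_d^2/4+\mathbf 1_{\{d\ge3\}}Y_d^2/4=-\tfrac34+\mathbf 1_{\{d\ge3\}}\tfrac14+O(\sqrt\varepsilon)$. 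Together these give~\eqref{eq:Vsing} with the explicit constant
\[
 K_d^{(0)}=-\tfrac14\log\bigl(2(d-1)\bigr)-\tfrac34+\tfrac14\mathbf 1_{\{d\ge3\}}.
\]

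\emph{Validity off the real axis.} The inversion in the proof of Proposition~\ref{prop:puiseux} is local analytic: $x(t)$ has a simple critical point at $\tau_d$ with $x''(\tau_d)\ne0$. So $T_d$ equals an analytic function of $\sqrt{1-x/\rho_d}$ on a slit neighbourhood of $\rho_d$. Consequently the expansion above, and hence~\eqref{eq:Vsing}, holds in a $\Delta$-neighbourhood of $\rho_d$, not only on the positive axis.

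\emph{Periodicity.} From~\eqref{eq:T}, $T_d(x)=x\,\phi(x^{d-1})$ with $\phi$ having nonnegative coefficients. Thus $Y_d$, and therefore $V_d$, is a power series in $x^{d-1}$. This gives at once that $C_d(v,v/(d-1))$, which is $v!$ times the coefficient of $x^v$ in $V_d$, vanishes unless $(d-1)\mid v$. It also gives $V_d(\omega x)=V_d(x)$ for $\omega^{d-1}=1$, so the same expansion holds at each rotated point $\omega\rho_d$.

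\emph{Main obstacle: locating the dominant singularities.} I must show that on $|x|\le\rho_d$ the only singularities of $V_d$ are the $d-1$ points $\omega\rho_d$, and that $V_d$ continues to a $\Delta$-domain at each of them. Two things can create a singularity. The first is a singularity of $T_d$: these lie on $|x|=\rho_d$ only at the rotated points, by Proposition~\ref{prop:puiseux} and the implicit-function theorem applied to $x(t)$ wherever $x'(t)\ne0$. The second is a zero of $1-Y_d$. For this I would use the triangle inequality on the nonnegative series $\phi$: $|Y_d(x)|\le Y_d(|x|)\le1$, with equality only if all monomials of $\phi(x^{d-1})$ are aligned. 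The coefficients of $\phi(y)$ are positive for every power of $y$, hence aperiodic, so alignment forces $x^{d-1}>0$, that is $x\in\{\omega|x|\}$; and $Y_d(|x|)=1$ forces $|x|=\rho_d$. Hence $1-Y_d\ne0$ on the closed disc except at $\omega\rho_d$. By compactness, $V_d$ is then analytic on a slightly larger disc minus $d-1$ small $\Delta$-indentations.

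\emph{Transfer to coefficients.} Standard transfer applies at each singularity: the coefficient of $x^v$ in $\frac14\log\frac1{1-x/\rho_d}$ is $\rho_d^{-v}/(4v)$, and the $O(\sqrt\varepsilon)$ remainder contributes $O(\rho_d^{-v}v^{-3/2})$. The constant $K_d^{(0)}$ contributes nothing for $v\ge1$. For $(d-1)\mid v$ the $d-1$ rotated contributions carry phases $\omega^{-v}=1$ and add, giving
\[
 [x^v]V_d=\frac{d-1}{4v}\rho_d^{-v}\bigl(1+O(v^{-1/2})\bigr).
\]
Multiplying by $v!$ yields~\eqref{eq:Vcoeff}.
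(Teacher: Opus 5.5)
Your proposal is correct and follows the paper's route: you substitute the Puiseux expansion of $Y_d$ into \eqref{eq:V}, use that $V_d$ is a series in $x^{d-1}$, add the $d-1$ rotated logarithmic contributions, and transfer the $O(\sqrt{1-x/\rho_d})$ remainder. It is somewhat more complete than the paper's sketch. You compute $K_d^{(0)}=-\tfrac14\log(2(d-1))-\tfrac34+\tfrac14\mathbf 1_{\{d\ge3\}}$ explicitly, and you give the positivity/aperiodicity argument that $Y_d\neq1$ on $|x|\le\rho_d$ away from the points $\omega\rho_d$; since $x'(t)=0$ exactly when $t^{d-1}=(d-2)!$, this also excludes other singularities of $T_d$, which the paper only asserts.
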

\begin{proof}
Equation~\eqref{eq:Tsing} and $\tau_d^{d-1}=(d-2)!$ give
\[
 1-Y_d(x)=\sqrt{2(d-1)}\sqrt{1-x/\rho_d}+O(1-x/\rho_d).
\]
Substitution in~\eqref{eq:V} proves~\eqref{eq:Vsing}.
By~\eqref{eq:T}, $V_d(x)$ is a function of $x^{d-1}$.
There are $d-1$ equivalent dominant singularities obtained by
rotation. On admissible powers $v\equiv0\pmod{d-1}$, their
logarithmic contributions add to give $(d-1)\rho_d^{-v}/(4v)$.
After summing the rotated contributions, the square-root remainder
transfers to $O(\rho_d^{-v}v^{-3/2})$.
Expressions~\eqref{eq:T} and~\eqref{eq:V} admit the required local
slit domains by analytic inversion near their finitely many critical
points~\cite[Theorem VI.3]{FS09}.
Multiplication by $v!$ gives~\eqref{eq:Vcoeff}.
\end{proof}

\subsection{A partial harmonic sum}
Put $a_v=[x^v]V_d(x)\rho_d^v$ and define $S_m=\sum_{1\le v\le m}a_v$.
By~\eqref{eq:Vcoeff},
\[
 S_m=\frac14\sum_{j\le m/(d-1)}\frac1j+O(1)
     =\frac14\log m+O(1).
\]
In fact, the error after subtraction of the harmonic term converges,
since its terms are $O(j^{-3/2})$.
This identifies the logarithmic contribution of unicyclic components
without assuming any property of the kernel coefficients of complex
components.

\section{Complex components and analytic passage to a limit law}
The results of this section belong to the analytic route: they
establish exact identities and fixed-excess expansions, but do not
provide the estimates uniform in $\ell$ required to close the
saddle-point method. In particular, Theorem~\ref{thm:main} is
proved unconditionally by the probabilistic route (Section 7),
without using any result of this section.

For $\ell\ge1$, put
\[
 A_{\ell,d}(x)=\sum_{\substack{v\ge1\\v+\ell\equiv0\pmod{d-1}}}
 C_d\left(v,\frac{v+\ell}{d-1}\right)\frac{x^v}{v!}.
\]
These series describe components whose incidence graph has at least
two independent cycles. For fixed $v$, the exact law already includes
them through~\eqref{eq:excess-group}. At the critical scale, controlling
their contributions requires uniform estimates over the excesses
present in the saddle region. The precise form of such an estimate
depends on the combinatorial kernel decomposition used.

\begin{theorem}[Wright transfer under a local singular hypothesis]\label{thm:wright}
Fix $d\ge2$ and $\ell\ge1$. Suppose that, in slit neighbourhoods of
each dominant singularity, $A_{\ell,d}$ has compatible rotations
of the local expansion
\[
 A_{\ell,d}(x)=b_{\ell,d}(1-x/\rho_d)^{-3\ell/2}
      +O\bigl((1-x/\rho_d)^{-3\ell/2+1/2}\bigr),
\]
where $b_{\ell,d}>0$, and suppose there is no other dominant
singularity. Then, along admissible values of $v$,
\[
 C_d\left(v,\frac{v+\ell}{d-1}\right)
 =\frac{(d-1)b_{\ell,d}}{\Gamma(3\ell/2)}v!\rho_d^{-v}v^{3\ell/2-1}
       \bigl(1+O(v^{-1/2})\bigr).
\]
In particular, when $d=2$, this formula recovers exactly the formula
for the graph case ($d=2$):
\begin{equation}\label{eq:graph-wright}
 C_2(v,v+\ell)=\frac{b_{\ell,2}}{\Gamma(3\ell/2)}
       v!\rho_2^{-v}v^{3\ell/2-1}\bigl(1+O(v^{-1/2})\bigr).
\end{equation}
\end{theorem}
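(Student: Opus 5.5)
The plan is a direct application of the Flajolet--Odlyzko transfer theorem~\cite[Theorem VI.3]{FS09}, adapted to $d-1$ rotated dominant singularities. This is exactly the scheme used for Corollary~\ref{cor:tree-asym} and Proposition~\ref{prop:V-asym}.

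\emph{Periodicity.} By~\eqref{eq:excess-group}, $A_{\ell,d}(x)=x^{r}G(x^{d-1})$ for some power series $G$, where $r\equiv-\ell\pmod{d-1}$. The dominant singularities are therefore $\omega^k\rho_d$, with $\omega=e^{2\pi i/(d-1)}$ and $0\le k\le d-2$. The phrase ``compatible rotations'' I read as
\[
 A_{\ell,d}(\omega^k x)=\omega^{kr}A_{\ell,d}(x).
\]
The local expansion at $\omega^k\rho_d$ is then $\omega^{kr}b_{\ell,d}(1-x/(\omega^k\rho_d))^{-3\ell/2}$, with the rotated error term. Since there is no other dominant singularity, $A_{\ell,d}$ is $\Delta$-analytic at each rotated point, after shrinking the slit neighbourhoods to a finite union of $\Delta$-domains indented at the $d-1$ points. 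On this union the function is analytic up to a slightly larger radius.

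\emph{Transfer.} Apply the multi-singularity version of the transfer theorem~\cite[Theorem VI.5]{FS09}. The singular term at $\rho_d$ contributes
\[
 b_{\ell,d}\rho_d^{-v}\frac{v^{3\ell/2-1}}{\Gamma(3\ell/2)}\bigl(1+O(v^{-1})\bigr).
\]
The $O\bigl((1-x/\rho_d)^{-3\ell/2+1/2}\bigr)$ remainder contributes $O(\rho_d^{-v}v^{3\ell/2-3/2})$, which is a relative error $O(v^{-1/2})$. The exponent $-3\ell/2$ is not a nonpositive integer because $\ell\ge1$, so $\Gamma(3\ell/2)$ is finite and nonzero. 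The rotated singularity $\omega^k\rho_d$ contributes the same term multiplied by $\omega^{kr}\omega^{-kv}$. Summing over $k$ gives
\[
 \sum_{k}\omega^{k(r-v)}=d-1 \quad\text{if } v\equiv r\pmod{d-1},
\]
and $0$ otherwise. Admissible values of $v$ are exactly those with $v+\ell\equiv0\pmod{d-1}$, that is, $v\equiv r$. This gives the factor $d-1$. Multiplying $[x^v]A_{\ell,d}$ by $v!$ then yields the stated formula.

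\emph{Case $d=2$.} Here there is a single singularity, the factor $d-1$ equals $1$, and $(v+\ell)/(d-1)=v+\ell$, which gives~\eqref{eq:graph-wright}.

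\emph{Main obstacle.} The main difficulty is making precise the analytic-domain hypothesis. Concretely, one must check that ``no other dominant singularity'' together with the local expansions yields $\Delta$-analyticity with a uniform indentation at all $d-1$ points, so that the contour estimates of the transfer theorem apply simultaneously. I would handle this by the periodicity reduction: write $A_{\ell,d}(x)=x^rG(x^{d-1})$. Then $G$ has a single dominant singularity at $\rho_d^{d-1}$, with expansion
\[
 G(y)=\rho_d^{-r}\,b_{\ell,d}\,(d-1)^{-3\ell/2}(1-y/\rho_d^{d-1})^{-3\ell/2}+\cdots.
\]
This uses the identity $1-x/\rho_d\sim(1-y/\rho_d^{d-1})/(d-1)$. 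Applying the single-singularity transfer to $G$ avoids the summation over roots of unity altogether. I would use this reduction as a cross-check of the constant: $[y^j]G$ gives the same leading term once one substitutes $v=r+(d-1)j$ and uses $j^{3\ell/2-1}=\bigl(v/(d-1)\bigr)^{3\ell/2-1}(1+O(1/v))$, so that the powers of $d-1$ combine to the overall factor $d-1$.
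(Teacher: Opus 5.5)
Your main argument is correct and takes the same route as the paper: transfer at each of the $d-1$ rotated dominant singularities, with the roots-of-unity sum killing the non-admissible $v$ and giving the factor $d-1$ on the admissible ones; you spell out the details the paper's two-line proof leaves implicit. There are two small slips, neither affecting the main line: since $1-x/\rho_d\sim(1-y/\rho_d^{d-1})/(d-1)$, the expansion of $G$ carries $(d-1)^{+3\ell/2}$, not $(d-1)^{-3\ell/2}$ (only with this sign does it combine with $j^{3\ell/2-1}=(v/(d-1))^{3\ell/2-1}(1+O(v^{-1}))$ to give the factor $d-1$); and $\Gamma(3\ell/2)$ is finite and nonzero because $3\ell/2\notin\{0,-1,-2,\dots\}$, not because $-3\ell/2$ avoids the nonpositive integers (it is a negative integer whenever $\ell$ is even).
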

\begin{proof}
Apply the standard transfer theorem~\cite{FS09} to the dominant
algebraic singularity and the indicated remainder at each dominant
point. The rotation symmetry eliminates nonadmissible powers and
multiplies the leading coefficient on admissible powers by $d-1$.
\end{proof}
Theorem~\ref{thm:wright} is deliberately pointwise in $\ell$.
It provides no bound on $b_{\ell,d}$ as $\ell\to\infty$ and does
not permit summation over $\ell$.
Kernel reduction, adapted to hypergraphs, must retain whole
$d$-edges and the incidence structure.
A kernel formula obtained simply by replacing $T$ by $T_d$ does
not suffice to establish these estimates.

\begin{proposition}[Quantitative criterion for excess summation]\label{prop:excess}
Let $K\Subset(0,\infty)$, $m_n(a)=\lfloor an^{2/3}\rfloor$ and
$(y_n,z_n)$ be positive parameters in the critical window.
Suppose there exist $C,R>0$ such that, for all $a\in K$, all
sufficiently large $n$, all $\ell\ge1$, and all $\theta$ on the
major arc under consideration,
\begin{equation}\label{eq:excess-bound}
 \left|z_n^{\ell/(d-1)}
 A_{\ell,d,m_n(a)}(y_ne^{i\theta})\right|
 \le C\frac{R^\ell}{\Gamma(\ell/2+1)}.
\end{equation}
If, for each fixed $\ell$, the left-hand side converges uniformly
in $a\in K$ and locally uniformly in the angular variable to a
function $G_{\ell,a}$, then
\[
 \sum_{\ell\ge1}z_n^{\ell/(d-1)}A_{\ell,d,m_n(a)}(y_ne^{i\theta})
 \longrightarrow\sum_{\ell\ge1}G_{\ell,a}(\theta)
\]
uniformly in $a\in K$ and locally uniformly on the major arc.
The limiting series converges absolutely, and any integration
over a compact angular set may be interchanged with the sum and
the limit.
\end{proposition}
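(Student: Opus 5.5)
This is a Weierstrass M-test combined with dominated convergence for series (Tannery's theorem), applied uniformly in the parameters $(a,\theta)$. Write
\[
 F_{n,\ell}(a,\theta)=z_n^{\ell/(d-1)}A_{\ell,d,m_n(a)}(y_ne^{i\theta}),
 \qquad M_\ell=C\frac{R^\ell}{\Gamma(\ell/2+1)}.
\]
The first step is to check that $\sum_{\ell\ge1}M_\ell<\infty$. By Stirling's formula, $\Gamma(\ell/2+1)^{1/\ell}\to\infty$, so the root test applies. Equivalently, $\sum_\ell R^\ell/\Gamma(\ell/2+1)$ is bounded by a constant times $e^{R^2}(1+R)$: split into even and odd $\ell$ and compare with the exponential series in $R^2$.

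Next, pass to the limit termwise. For each fixed $\ell$, the hypothesis gives $F_{n,\ell}\to G_{\ell,a}$ uniformly in $a\in K$ and uniformly on each compact subset $\Theta$ of the major arc. Taking the limit in~\eqref{eq:excess-bound} gives $|G_{\ell,a}(\theta)|\le M_\ell$, so the limiting series converges absolutely and uniformly in $(a,\theta)$.

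For the uniform convergence, fix $\varepsilon>0$ and choose $L$ with $\sum_{\ell>L}M_\ell<\varepsilon$. Then
\[
 \sup_{a\in K,\theta\in\Theta}\Bigl|\sum_{\ell\ge1}F_{n,\ell}-\sum_{\ell\ge1}G_{\ell,a}\Bigr|
 \le\sum_{\ell\le L}\sup|F_{n,\ell}-G_{\ell,a}|+2\varepsilon.
\]
The finite sum tends to zero by the fixed-$\ell$ hypothesis, which proves the uniform convergence. Note that~\eqref{eq:excess-bound} holds only for sufficiently large $n$. That threshold is uniform in $\ell$, $a$ and $\theta$ by hypothesis, so it is harmless.

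Finally, the interchange of integration over a compact angular set with the sum and the limit follows from the uniform convergence of partial sums on a set of finite measure. Alternatively, it follows from Lemma~\ref{lem:udc} with the constant dominating function $\sum_\ell M_\ell$, together with Fubini--Tonelli, since $\sum_\ell\int|F_{n,\ell}|\le|\Theta|\sum_\ell M_\ell$.

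The one step requiring care is measurability. One could worry about the dependence on $a$, since $m_n(a)$ is piecewise constant in $a$. Measurability in $\theta$ is immediate, because each $A_{\ell,d,m}$ is a power series that is continuous on the arc. No genuine obstacle arises; all the real difficulty lies in verifying~\eqref{eq:excess-bound}, which is assumed here.
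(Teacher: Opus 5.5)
Your proposal is correct and takes essentially the same route as the paper: you show that $R^\ell/\Gamma(\ell/2+1)$ is summable, use the Weierstrass test for normal convergence uniform in $(a,\theta)$, split into head and tail to pass the termwise limits, and use dominated (or uniform) convergence to interchange with integration. Your even/odd comparison is slightly more accurate than the paper's parenthetical remark, which equates the whole series with $e^{R^2}-1$, whereas only the even-indexed terms sum to that.
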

\begin{proof}
The series $\sum_{\ell\ge1}R^\ell/\Gamma(\ell/2+1)$ converges for
every $R<\infty$ (it is the exponential series $e^{R^2}-1$ in
$R^2$), as shown by Stirling's formula applied to
$\Gamma(\ell/2+1)\sim\sqrt{2\pi}(\ell/2)^{\ell/2}e^{-\ell/2}$.
Bound~\eqref{eq:excess-bound} and the Weierstrass test give normal
convergence, uniformly in $a$. Term-by-term convergence, together
with this summable domination, gives uniform convergence of the
sum. Interchange with the integral on a compact set then follows
from dominated convergence.
\end{proof}

\begin{remark}[Scope of the analytic criterion]
Proposition~\ref{prop:excess} reduces excess summation to verification
of a pointwise bound. It is a conditional implication: it does not
prove~\eqref{eq:excess-bound}, but only states what would follow
from it. Section 9 draws the boundary between fixed-excess
asymptotics and the uniform estimates required; this criterion
plays no part in the probabilistic proof of Theorem~\ref{thm:main}.
\end{remark}

\begin{remark}[Passage to hypergraph excess notation]
The notation $C_d(v,v+\ell)$ in the initial text mixed graph excess
and hypergraph excess. It is correct for $d=2$, since $\ell=e-v$.
For $d>2$, the usual excess is $\ell=(d-1)e-v$ and imposes
$e=(v+\ell)/(d-1)$. The expression $C_d(v,(v+\ell)/(d-1))$
therefore does not change the intended content: it gives the
arithmetically correct generalization, with admissibility condition
$v+\ell\equiv0\pmod{d-1}$.
\end{remark}
To obtain a limit for the largest component from~\eqref{eq:cauchy},
one must control these component classes uniformly and carry out
a saddle-point calculation with the correct fixed-edge
normalization. For graphs, these difficulties are resolved by
known critical results~\cite{Pit01,LPW94}.
We do not deduce a general integral law for $d$-uniform hypergraphs
from the fixed-excess formula above alone.

\section{Excess rescaling and the cancellation mechanism}
Group connected components by excess before applying the saddle-point
method. Put $h=d-1$ and, for $\ell\ge-1$, define
\[
 A_{\ell,d,m}(y)=\sum_{\substack{1\le v\le m\\v+\ell\equiv0\pmod h}}
        C_d\left(v,\frac{v+\ell}{h}\right)\frac{y^v}{v!}.
\]
In particular, $A_{-1,d,m}$ is the truncated EGF of hypertrees and
$A_{0,d,m}$ that of unicyclic components.

\begin{theorem}[Exact rescaling by excess]\label{thm:rescale}
For every real $z>0$, with $y=xz^{1/h}$,
\begin{equation}\label{eq:rescale}
 C_{d,m}(x,z)=\sum_{\ell\ge-1}z^{\ell/h}A_{\ell,d,m}(y).
\end{equation}
Consequently, putting $q=1-p$ and $z=p/q$,
\begin{equation}\label{eq:bin-rescale}
 \PP_p\{L_n\le m\}=q^Nn!z^{n/h}[y^n]\exp\!\biggl(z^{-1/h}A_{-1,d,m}(y)+A_{0,d,m}(y)+\sum_{\ell\ge1}z^{\ell/h}A_{\ell,d,m}(y)\biggr).
\end{equation}
\end{theorem}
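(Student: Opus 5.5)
The plan is to verify \eqref{eq:rescale} monomial by monomial and then substitute into Remark~\ref{rem:binomial}. Start from the grouping by excess \eqref{eq:excess-group}. A monomial $C_d(v,e)\frac{x^v}{v!}z^e$ with $v\le m$ has excess $\ell=(d-1)e-v=he-v\ge-1$, so $e=(v+\ell)/h$ and the congruence $v+\ell\equiv0\pmod h$ holds automatically. For real $z>0$ we fix the positive real root $z^{1/h}$. Then
\[
 x^vz^e=x^vz^{(v+\ell)/h}=(xz^{1/h})^v z^{\ell/h}=y^vz^{\ell/h}.
\]
Summing first over $v$ at fixed $\ell$ gives exactly $z^{\ell/h}A_{\ell,d,m}(y)$. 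Every pair $(v,e)$ with $C_d(v,e)\ne0$ appears exactly once, since connected components have $\ell\ge-1$ (the hypertree bound $v\le1+(d-1)e$). For fixed $m$ all sums are finite, so no convergence question arises and the reordering is legitimate. This proves \eqref{eq:rescale} as an identity of finite sums, hence also as an identity of polynomials in $x$ with coefficients that are real functions of $z$.

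For \eqref{eq:bin-rescale}, start from Remark~\ref{rem:binomial}:
\[
 \PP_p\{L_n\le m\}=q^Nn![x^n]\exp(C_{d,m}(x,z)),\qquad z=p/q.
\]
Separate the terms $\ell=-1$ and $\ell=0$ in \eqref{eq:rescale}, which carry the factors $z^{-1/h}$ and $1$. Write
\[
 G(y)=z^{-1/h}A_{-1,d,m}(y)+A_{0,d,m}(y)+\sum_{\ell\ge1}z^{\ell/h}A_{\ell,d,m}(y),
\]
so that $\exp(C_{d,m}(x,z))=\exp(G(xz^{1/h}))$ with $z$ held fixed. Expanding $\exp G(y)=\sum_k g_ky^k$ gives
\[
 [x^n]\exp G(xz^{1/h})=z^{n/h}g_n=z^{n/h}[y^n]\exp G(y).
\]
This is just the scaling rule $[x^n]F(cx)=c^n[y^n]F(y)$ with $c=z^{1/h}>0$. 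Substituting yields \eqref{eq:bin-rescale}.

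The only delicate point is bookkeeping rather than analysis. One must check that the fractional powers $z^{\ell/h}$ are taken consistently as real positive roots; this is why the hypothesis $z>0$ is needed. One must also check that the exponent identity $e=(v+\ell)/h$ is used with the hypergraph excess, not the graph one. I also expect to remark that the $\ell\ge1$ sum is finite for fixed $m$, because $e\le\binom md$, so that the exponential in \eqref{eq:bin-rescale} is the exponential of a polynomial in $y$. The coefficient extraction is then purely formal.
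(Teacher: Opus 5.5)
Your proof is correct and follows the paper's argument essentially step for step. You use the monomial identity $x^vz^e=z^{\ell/h}(xz^{1/h})^v$ coming from $he=v+\ell$, regroup the finite polynomial term by term, and then apply the scaling rule $[x^n]f(xz^{1/h})=z^{n/h}[y^n]f(y)$ to the formula of Remark~\ref{rem:binomial}. Your extra remarks, that the positive real root is fixed and that the excess sum is finite for fixed $m$, spell out the bookkeeping the paper compresses into the condition $z>0$ and its closing sentence about integer edge exponents.
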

\begin{proof}
For each component, $he=v+\ell$, so
$z^ex^v=z^{\ell/h}(xz^{1/h})^v$.
Substitution in~\eqref{eq:C} proves~\eqref{eq:rescale} term by term.
Next, $[x^n]f(xz^{1/h})=z^{n/h}[y^n]f(y)$ for $z>0$.
Applying Remark~\ref{rem:binomial} gives~\eqref{eq:bin-rescale}.
Although some factors may contain fractional powers of $z$,
each complete monomial has an integer exponent in the edge marker.
\end{proof}
In window~\eqref{eq:window}, $p=M/N$ satisfies
$z=(\tau_d/n)^h(1+\lambda n^{-1/3}+O(n^{-1}))$.
Thus $z^{-1/h}\asymp n$, $z^{0/h}=1$, and
$z^{\ell/h}\asymp n^{-\ell}$ for $\ell\ge1$.
These three weights accompany hypertrees, unicyclic components and
complex components, respectively.
The tree exponent is therefore of order $n$, not $n^{(d-1)^2}$.

\begin{theorem}[Cancellation of the tree normalization]\label{thm:cancel}
Let $p=M/N$, $q=1-p$, $z=p/q$, and suppose $M$
satisfies~\eqref{eq:window}. Define the positive quantity
\[
 B_n=q^Nn!z^{n/h}\rho_d^{-n}\exp\bigl(z^{-1/h}U_d(\rho_d)\bigr).
\]
For fixed $d$ and $\lambda$ in a bounded set,
\begin{equation}\label{eq:cancel}
 \log B_n=\frac12\log n+O(1).
\end{equation}
In particular, its terms of orders $n,n^{2/3}$ and $n^{1/3}$ cancel
exactly. If $m\asymp n^{2/3}$, replacing $U_d(\rho_d)$ by its
truncation $A_{-1,d,m}(\rho_d)$ changes $\log B_n$ by only $O(1)$.
\end{theorem}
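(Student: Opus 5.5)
The plan is to expand each of the five factors of $B_n$ to relative precision $O(n^{-1})$ in the window, and check directly that the coefficients of $n$, $n\varepsilon$ and $n\varepsilon^2$ vanish. Here $\varepsilon$ is the relative deviation of $z$ from $(\tau_d/n)^h$. First I would pin down $z$ precisely. From $N=\binom nd=\frac{n^d}{d!}\bigl(1-\binom d2n^{-1}+O(n^{-2})\bigr)$ and~\eqref{eq:window}, $p=M/N=(d-2)!\,n^{-h}(1+\lambda n^{-1/3}+O(n^{-1}))$. Since $q^{-1}=1+O(n^{-h})$ and $\tau_d^h=(d-2)!$, this gives
\[
 z=(\tau_d/n)^h(1+\varepsilon),\qquad
 \varepsilon=\lambda n^{-1/3}+O(n^{-1}).
\]
The only uses of $\varepsilon$ are $n\varepsilon^3=O(1)$ and the fact that any $O(n^{-1})$ part of $\varepsilon$ contributes $O(1)$ after multiplication by $n$. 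Both hold uniformly for $\lambda$ bounded.

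Next I expand the logarithm of each factor:
\begin{itemize}
\item $N\log q=-Np-Np^2/2-\dots=-M+O(M^2/N)=-M+O(1)$, since $M^2/N=O(n^{2-d})$.
\item Stirling gives $\log n!=n\log n-n+\tfrac12\log n+O(1)$.
\item $\frac nh\log z=n\log\tau_d-n\log n+\frac nh\bigl(\varepsilon-\tfrac{\varepsilon^2}2\bigr)+O(1)$.
\item $-n\log\rho_d=-n\log\tau_d+\frac nh$.
\item Using $U_d(\rho_d)=\frac hd\tau_d$ from Proposition~\ref{prop:puiseux},
\[
 z^{-1/h}U_d(\rho_d)=\frac{nh}{d}(1+\varepsilon)^{-1/h}
 =\frac{nh}d\Bigl(1-\frac\varepsilon h+\frac{(h+1)\varepsilon^2}{2h^2}\Bigr)+O(1).
\]
\end{itemize}
For $-M$ I write $-\frac n{dh}(1+\varepsilon)+O(1)$, legitimately, because $\varepsilon-\lambda n^{-1/3}=O(n^{-1})$.

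Collecting terms, the $n\log n$ and $n\log\tau_d$ terms cancel identically. The order-$n$ coefficient is $-1+\frac1h-\frac1{dh}+\frac hd$; since $\frac1h-\frac1{dh}=\frac1d$, this equals $-1+\frac1d+\frac{d-1}d=0$. The coefficient of $n\varepsilon$ (which carries the $n^{2/3}$ order) is $\frac1h-\frac1{dh}-\frac1d=0$. The coefficient of $n\varepsilon^2$ (which carries the $n^{1/3}$ order) is $-\frac1{2h}+\frac{h(h+1)}{2dh^2}=-\frac1{2h}+\frac1{2h}=0$, using $h+1=d$. The cubic term is $O(n\varepsilon^3)=O(\lambda^3)$, which leaves $\log B_n=\frac12\log n+O(1)$.

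The step I expect to be the main obstacle is this bookkeeping. One has to make sure that every correction of relative size $n^{-1}$ is absorbed into $O(1)$ rather than producing a stray $n^{1/3}$ or $n^{2/3}$ term. Such corrections come from the $\binom d2/n$ in $N$, the $O(1)$ in $M$, $q^{-1}$, and the $O(n^{-1})$ part of $\varepsilon$. I would handle this by writing every factor as a function of the single variable $\varepsilon$, plus explicit $O(1/n)$ remainders, before expanding.

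For the truncation claim, I would use $U_d(\rho_d)-A_{-1,d,m}(\rho_d)=\sum_{v>m}C_d\bigl(v,\frac{v-1}h\bigr)\rho_d^v/v!$. By Corollary~\ref{cor:tree-asym} its terms are $O(v^{-5/2})$ along $v\equiv1\pmod h$, so the tail is $O(m^{-3/2})=O(n^{-1})$ when $m\asymp n^{2/3}$. Multiplying by $z^{-1/h}\asymp n$ changes $\log B_n$ by $O(1)$.
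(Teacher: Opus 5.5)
Your proposal is correct and is essentially the paper's proof. Your three coefficient checks (for $n$, $n\varepsilon$ and $n\varepsilon^2$) are exactly the statement $f_h(0)=f_h'(0)=f_h''(0)=0$ for the function $f_h$ the paper assembles from the same expansions of $\log q^N$, $\log(n!z^{n/h}\rho_d^{-n})$ and $z^{-1/h}U_d(\rho_d)$, with the cubic remainder $O(nt^3)=O(1)$; your truncation step (tail $O(m^{-3/2})$ from Corollary~\ref{cor:tree-asym}, multiplied by $z^{-1/h}=O(n)$) is also identical, and using $\varepsilon$ instead of $t=\lambda n^{-1/3}$ only costs the $O(1)$ you already account for.
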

\begin{proof}
Put $t=\lambda n^{-1/3}$. The identities $\tau_d^h=(d-2)!$,
$U_d(\rho_d)=h\tau_d/d$ and $\rho_d=\tau_de^{-1/h}$, combined with
Stirling's formula, give
\begin{align*}
 \log(q^N)&=-\frac n{dh}(1+t)+O(1),\\
 \log(n!z^{n/h}\rho_d^{-n})
 &=n\left(-1+\frac1h+\frac1h\log(1+t)\right)+\frac12\log n+O(1),\\
 z^{-1/h}U_d(\rho_d)&=\frac{nh}{d}(1+t)^{-1/h}+O(1).
\end{align*}
For $d=2$, the $O(1)$ term in the first line includes the quadratic
term of $N\log(1-p)$; the same bounds are uniform for bounded
$\lambda$. Put
\[
 f_h(t)=-\frac{1+t}{dh}-1+\frac1h+\frac1h\log(1+t)
                    +\frac hd(1+t)^{-1/h}.
\]
Since $d=h+1$, direct differentiation gives
$f_h(0)=f'_h(0)=f''_h(0)=0$.
Consequently $nf_h(t)=O(nt^3)=O(1)$, proving~\eqref{eq:cancel}.
By Corollary~\ref{cor:tree-asym},
$[x^v]U_d(x)\rho_d^v=O(v^{-5/2})$ on its admissible congruence class.
Its tail beyond $m$ is $O(m^{-3/2})$.
Since $z^{-1/h}=O(n)$, the truncation cost is
$O(nm^{-3/2})=O(1)$.
\end{proof}

\begin{corollary}[Critical cubic contribution]
With the notation of Theorem~\ref{thm:cancel}, the part of
$\log B_n-\frac12\log n$ depending on the three displayed order-$n$
terms has limit
\[
 -\frac{\lambda^3}{6(d-1)^2}.
\]
All other bounded terms come from the next term of Stirling's
formula, the exact expansion of $N\log(1-p)$, and the
$O(n^{-1})$ correction in $z$.
\end{corollary}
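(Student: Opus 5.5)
The plan is to isolate the contribution $nf_h(t)$ from the proof of Theorem~\ref{thm:cancel}, with $t=\lambda n^{-1/3}$ and $h=d-1$, and compute its limit from a Taylor expansion of $f_h$ at $0$. By construction, the three displayed order-$n$ terms in that proof add up to exactly
\[
 nf_h(t)=n\Bigl(-\frac{1+t}{dh}-1+\frac1h+\frac1h\log(1+t)+\frac hd(1+t)^{-1/h}\Bigr).
\]
Every other bounded piece of $\log B_n-\tfrac12\log n$ is relegated to the $O(1)$ terms of those lines. These are the next Stirling term, the quadratic and higher terms of $N\log(1-p)$, and the $O(n^{-1})$ correction in $z$. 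So the claim reduces to computing $\lim_{n\to\infty} nf_h(\lambda n^{-1/3})$.

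The proof of Theorem~\ref{thm:cancel} already gives $f_h(0)=f_h'(0)=f_h''(0)=0$. Since $f_h$ is analytic near $0$, Taylor's theorem gives
\[
 nf_h(t)=\frac{f_h'''(0)}6\,nt^3+O(nt^4)=\frac{f_h'''(0)}6\lambda^3+O(n^{-1/3}).
\]
The error bound is uniform for $\lambda$ in a bounded set.

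The only remaining step is the third derivative at $0$. The linear and constant terms of $f_h$ contribute nothing. The logarithmic term contributes $\frac1h\cdot2=2/h$. For the power term, $\frac{d^3}{dt^3}(1+t)^a\big|_0=a(a-1)(a-2)$ with $a=-1/h$, which equals $-(1+h)(1+2h)/h^3$. Multiplying by $h/d=h/(h+1)$ gives $-(1+2h)/h^2$. Hence
\[
 f_h'''(0)=\frac2h-\frac{1+2h}{h^2}=-\frac1{h^2},
\]
so the limit is $-\lambda^3/(6h^2)=-\lambda^3/(6(d-1)^2)$.

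The main point requiring care is bookkeeping rather than analysis. One must check that the identity $d=h+1$ is used consistently, since it is what makes the lower-order derivatives vanish and fixes the value of the third. One must also confirm that the relegated $O(1)$ terms do not depend on the three displayed order-$n$ expressions, so that the phrase ``the part depending on'' is well defined. As a sanity check, for $d=2$ the formula reduces to the classical $-\lambda^3/6$ of the graph window.
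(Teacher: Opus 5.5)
Your proposal is correct and follows the paper's own argument. Both Taylor-expand $nf_h(\lambda n^{-1/3})$ using $f_h(0)=f_h'(0)=f_h''(0)=0$ and compute $f_h'''(0)=-h^{-2}$, which gives $-\lambda^3/(6h^2)+O(n^{-1/3})$; your term-by-term evaluation of the third derivative ($2/h$ from the logarithm, $-(1+2h)/h^2$ from the power term) is correct.
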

\begin{proof}
Writing $h=d-1$, a third direct differentiation gives
$f'''_h(0)=-h^{-2}$.
Since $t=\lambda n^{-1/3}$ and $f_h(0)=f'_h(0)=f''_h(0)=0$,
Taylor's formula yields
$nf_h(t)=-\lambda^3/(6h^2)+O(n^{-1/3})$.
\end{proof}

\begin{remark}[Scope of the cancellation]
Theorem~\ref{thm:cancel} concerns the real normalization factor at
$y=\rho_d$ in the independent-edge model.
It controls neither the complex angular variable in the Cauchy
integral, nor the excess sum, nor the conditioning
in~\eqref{eq:conditioning}.
It therefore does not by itself prove a limit law for the largest
component. It does, however, establish the main cancellation from
an exact combinatorial identity.

To extend this route, summation over $\ell\ge1$
in~\eqref{eq:bin-rescale} would require uniform domination of each
term $z^{\ell/h}A_{\ell,d,m}(y)$.
For fixed $\ell$, Theorem~\ref{thm:bollobas}(i) gives
$C_d(v,(v+\ell)/h)\le K_{d,\ell}v!\rho_d^{-v}v^{3\ell/2-1}$,
which yields $z^{\ell/h}A_{\ell,d,m}(y)\le K_{d,\ell}\cdot O(1)$
in the critical window.
But $K_{d,\ell}$ depends on $\ell$ without a uniform bound, and
the global combinatorial bound~\eqref{eq:global} does not replace
Wright beyond the fixed-density regime.
Excess summation therefore remains conditional
on~\eqref{eq:excess-bound}, which the probabilistic route bypasses
entirely.
\end{remark}

\subsection{Balance of complex components}
Suppose the fixed-excess singular expansion of
Theorem~\ref{thm:wright} holds with exponent $3\ell/2$.
For a deviation $1-y/\rho_d\asymp n^{-2/3}$, it gives
$A_{\ell,d}(y)\asymp n^\ell$ for every fixed $\ell$.
The exact weight $z^{\ell/h}\asymp n^{-\ell}$
from~\eqref{eq:bin-rescale} then makes each fixed-excess class
of order one. This explains the initial argument leading to the
$n^{2/3}$ scale. To interchange the sum over $\ell$ and the
coefficient-extraction integral, uniform-in-$\ell$ estimates are
still needed; fixed-excess transfer does not supply them.

\subsection{Euler--Maclaurin structure of small components}
The tree and unicyclic estimates already explain the two
nonpolynomial normalization terms appearing in a critical
coefficient calculation. For $m=\lfloor an^{2/3}\rfloor$,
Corollary~\ref{cor:tree-asym} gives
\[
 z^{-1/h}\sum_{v>m}[x^v]U_d(x)\rho_d^v=O(1),
\]
uniformly for $a$ in every compact subset of $(0,\infty)$.
By Proposition~\ref{prop:V-asym},
\[
 \begin{split}
 \sum_{v\le m}[x^v]V_d(x)\rho_d^v
 &=\frac14\log m+K_d^{(0)}+O(m^{-1/2})\\
 &=\frac16\log n+\frac14\log a+K_d^{(0)}+O(n^{-1/3}).
 \end{split}
\]
The constant $K_d^{(0)}$ is the convergent limit obtained after
subtracting the main harmonic term.
These estimates are rigorous and do not use the formal series of
complex components.
The remaining Euler--Maclaurin contribution to the proposed
integral phase comes from uniform summation over $\ell\ge1$ and
falls under the analytic hypotheses stated below.

\subsection{Controls needed for the saddle-point method}
In~\eqref{eq:cauchy}, write the phase formally as
\[
 \Phi_{n,m}(x,z)=C_{d,m}(x,z)-n\log x-M\log z.
\]
At every admissible positive saddle $(x_*,z_*)$,
$x_*\partial_x C_{d,m}(x_*,z_*)=n$ and
$z_*\partial_z C_{d,m}(x_*,z_*)=M$.
These are exact equations for the finite generating polynomial;
their solvability and the location of their solution in the
critical zone are separate questions.
To obtain an integral limit, a uniform quadratic or higher-order
expansion is required near the critical saddle.
Proposition~\ref{prop:outer} already supplies the outer-arc bound.
The contribution of complex components in~\eqref{eq:excess-group}
still enters the precise localization and the non-Gaussian
expansion near the saddle.

The normalization factor in~\eqref{eq:cauchy} is exactly
$n!/\binom NM$. Taking a constant factor outside the integral
cannot change the probability; its value must therefore appear
in the asymptotic expansion. This bookkeeping is particularly
important when a tree term grows faster than $n$ before a change
of variables.

\section{Exploration, martingales, and excursions}
In the independent-edge model, choose $p$ so that
$(d-1)\binom{n-1}{d-1}p=1+\lambda n^{-1/3}+O(n^{-1})$.
At the first exploration step, each edge incident to an active
vertex contributes on average $d-1$ new vertices, with variance
$d-1$. After $n^{2/3}$ steps, fluctuations have order $n^{1/3}$.
Let $S_t$ denote the exploration walk: its increment is the number
of newly discovered vertices minus one.
Put $Q_t=\R(S)_t=S_t-\min_{0\le j\le t}S_j$, the length of the
active queue. Hence the rescaling
\begin{equation}\label{eq:walk-scale}
 n^{-1/3}S_{\lfloor sn^{2/3}\rfloor},
\end{equation}
the $n^{-2/3}$ scale being reserved for component sizes.
Under the usual depletion approximation, the drift coefficient
of the limiting process is $\lambda-(d-1)s$ and its diffusion
coefficient is $\sqrt{d-1}$.
For $d\ge3$, a newly revealed hyperedge may contain already
discovered vertices, which exploration must account for.
The fixed-edge model also introduces conditioning.
The one-step moment calculation fixes the spatial scale, but does
not replace the functional limit theorem: vertex collisions must
be controlled and the ordered excursions identified with
component sizes.

In what follows, write
\[
 W_{d,\lambda}(s)=\sqrt{d-1}\,B(s)+\lambda s-\frac{d-1}{2}s^2,
 \qquad \R f(s)=f(s)-\inf_{0\le u\le s}f(u).
\]

\begin{definition}[Exploration walk $S_t$ and reflected process $Q_t$]
Explore the vertices of $H_d(n,p)$ in an arbitrary order,
maintaining a queue of active vertices (discovered but not yet
processed). Initialize $S_0=0$.
At step $t\ge1$, remove an active vertex $v_t$ from the queue;
test each potential hyperedge on $d$-tuples containing $v_t$
and vertices not yet encountered, and add all newly discovered
vertices to the queue.
If $\xi_t$ denotes the number of these new vertices, put
\[
 \Delta S_t=\xi_t-1,\qquad S_t=\sum_{j=1}^t\Delta S_j.
\]
When the queue becomes empty (component completed), start the
next component with an arbitrary neutral vertex and increment
$S$ by $-1$ (no discovery, removal of one vertex from the external
queue). The reflected process is
\[
 Q_t=\R(S)_t=S_t-\min_{0\le j\le t}S_j\ge0,
\]
which represents the active queue length. Its returns to zero
correspond exactly to component ends.
\end{definition}

\begin{remark}[Excursions of $Q$ and component sizes]
Each excursion of $Q$ above zero corresponds to the complete
exploration of a connected component.
If the excursion extends over the discrete interval
$[t_{\mathrm{start}},t_{\mathrm{end}}]$, the component size is
$t_{\mathrm{end}}-t_{\mathrm{start}}$.
After rescaling $s=tn^{-2/3}$, the excursion lengths of
$n^{-1/3}Q_{\lfloor sn^{2/3}\rfloor}$ converge to the excursion
lengths of $\R(W_{d,\lambda})$, directly linking functional
convergence to the law of component sizes.
\end{remark}

\begin{theorem}[Unconditional excursion law]\label{thm:excursions}
In $H_d(n,p)$, if $(|\mathcal C_1|,|\mathcal C_2|,\ldots)$
denotes the decreasing sequence of component sizes, then
\[
 n^{-2/3}(|\mathcal C_1|,|\mathcal C_2|,\ldots)
 \Longrightarrow(\xi_1,\xi_2,\ldots)\quad\text{in }\ell^2_\downarrow,
\]
where $(\xi_1,\xi_2,\ldots)$ is the decreasing sequence of
excursion lengths of $\R(W_{d,\lambda})$.
In the model $H_d(n,M)$ of~\eqref{eq:window},
\[
 n^{-2/3}L_n\Longrightarrow\xi_1.
\]
\end{theorem}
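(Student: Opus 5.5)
We follow Aldous's strategy~\cite{Ald97}, adapted to hyperedges as in~\cite{NP07}. The plan has three parts. First, a functional limit theorem for the rescaled exploration walk $n^{-1/3}S_{\lfloor sn^{2/3}\rfloor}$ towards $W_{d,\lambda}$. Second, the upgrade from this convergence to convergence of ordered excursion lengths in $\ell^2_\downarrow$. Third, the transfer from $H_d(n,p)$ to $H_d(n,M)$ by a monotone coupling.

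\emph{Step 1 (drift and martingale part).} Let $\mathcal F_t$ be the exploration filtration and write $\Delta S_t=\xi_t-1$. Given $\mathcal F_{t-1}$, with $u_t$ unexplored vertices, the number of new hyperedges through $v_t$ is binomial with $\binom{u_t}{d-1}$ trials. Each such hyperedge brings $d-1$ new vertices, except that overlapping edges share vertices. We use $u_t=n-t-Q_{t-1}+O(1)$, and $Q_t=O_P(n^{1/3})$ on the time scale $t\le Tn^{2/3}$, which follows a posteriori by a stopping argument. This gives
\[
 \EE[\Delta S_t\mid\mathcal F_{t-1}]=\lambda n^{-1/3}-(d-1)\frac tn+O\bigl(n^{-2/3}+Q_{t-1}/n\bigr),
\]
and $\Var(\Delta S_t\mid\mathcal F_{t-1})=(d-1)+o(1)$. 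The overlap corrections are $O(n^{-1})$ per step, so they contribute $O(n^{-1/3})$ after $n^{2/3}$ steps and are negligible in the $n^{1/3}$ scaling. Summing, the compensator is $\lambda s n^{1/3}-\frac{d-1}{2}s^2n^{1/3}+o(n^{1/3})$ uniformly on $[0,T]$. The term $Q/n$ contributes $O(n^{1/3}\cdot n^{2/3}/n)=O(1)$. The martingale part has bracket $(d-1)sn^{2/3}(1+o(1))$, and its jumps satisfy a Lindeberg condition, since $\xi_t$ is stochastically dominated by $(d-1)\,\mathrm{Bin}$ with uniformly bounded exponential moments. The martingale FCLT then yields convergence in $D[0,T]$ to $W_{d,\lambda}$. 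The unit decrements at component starts occur at most $O_P(n^{1/3})$ times by time $Tn^{2/3}$, so they are absorbed by the reflection. The reflection map $\R$ is continuous, which gives convergence of $n^{-1/3}Q$ to $\R(W_{d,\lambda})$.

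\emph{Step 2 (excursions).} The finite-window convergence of excursion lengths uses Aldous's criterion. $\R(W_{d,\lambda})$ almost surely has no excursion starting or ending at the same time, nor one ending at a fixed time. Its excursion lengths are distinct, so the functional convergence gives convergence of the lengths of excursions completed before $T$. The main obstacle is tightness in $\ell^2_\downarrow$: we must exclude large components appearing after time $Tn^{2/3}$ and control $\sum_i|\mathcal C_i|^2$. For this we use the susceptibility bound. After $Tn^{2/3}$ steps the effective mean offspring is $1-(d-1)T n^{-1/3}+\lambda n^{-1/3}$. A comparison with a subcritical branching process, where an edge generates $d-1$ children, then gives
\[
 \EE\sum_{\text{components explored after }Tn^{2/3}}|\mathcal C|^2\le Cn^{4/3}/(T-\lambda)_+,
\]
which is $o(n^{4/3})$ uniformly as $T\to\infty$. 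On the limit side, $\sum\xi_i^2<\infty$ almost surely by the parabolic drift. Combining the two yields $\ell^2_\downarrow$ convergence.

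\emph{Step 3 (fixed $M$).} Choose $p_\pm$ with $\binom nd p_\pm=M\pm n^{2/3}\omega_n$, where $n^{1/2}\ll n^{2/3}\omega_n\ll n^{2/3}$. Then $\lambda_\pm=\lambda+o(1)$. With probability $1-o(1)$, the edge count of $H_d(n,p_-)$ is at most $M$ and that of $H_d(n,p_+)$ is at least $M$. Using the standard nested coupling of the uniform models (adding edges in uniform random order), we have $L(H_d(n,p_-))\le L_n\le L(H_d(n,p_+))$ whp. Both bounds converge after scaling to $\xi_1(\lambda_\pm)$, and $\xi_1$ is continuous in $\lambda$ in law by continuity of $W_{d,\lambda}$ in $\lambda$. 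Hence $n^{-2/3}L_n\Rightarrow\xi_1$.
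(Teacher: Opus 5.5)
Your three steps follow the paper's proof. Step 1 is the martingale FCLT with a stopping argument for $Q_t=O_{\PP}(n^{1/3})$ (Lemma~\ref{lem:finite}, Theorem~\ref{thm:invariance}). Step 2 uses a subcritical Galton--Watson/susceptibility bound for late components (Lemma~\ref{lem:late} and the size-biasing identity in Theorem~\ref{thm:global}). Step 3 is the uniform-order monotone coupling (Theorem~\ref{thm:fixed}). There is, however, a gap in Step~2. Your decomposition covers the excursions \emph{completed} before $T$ and the components \emph{begun} after $Tn^{2/3}$, but not the component still being explored at time $Tn^{2/3}$. Its part before the cutoff can have size of order $n^{2/3}$. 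Your branching bound does not control its square, since that bound only covers components started from a fresh root in the subcritical regime. The finite-window convergence says nothing about how long this component continues after the cutoff. A macroscopic residual would change both the largest entry and the $\ell^2$ norm. So neither $\ell^2_\downarrow$ convergence nor even the upper bound needed for $\xi_1$ follows from what you wrote.

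The paper fills this with Lemma~\ref{lem:cutoff} and the residual $R_{n,T}$ in the proof of Theorem~\ref{thm:global}. By time reversal, $\R(W_{d,\lambda})(T)$ is stochastically dominated by the supremum of a Brownian motion with drift $-hT/4$. Hence $Q_{\lfloor Tn^{2/3}\rfloor}=O_{\PP}(n^{1/3}/T)$ \emph{uniformly in $T$}. This uniformity does not come from your fixed-horizon stopping argument, whose constants grow with $T$. Conditionally on $Q_{\lfloor Tn^{2/3}\rfloor}=q$, the residual is dominated by $q$ independent subcritical progenies. This gives $\EE(R_{n,T}\mid\mathcal F_{\lfloor Tn^{2/3}\rfloor})\le Cqn^{1/3}/T$, and therefore $\lim_{T\to\infty}\limsup_{n}\PP(R_{n,T}>\delta n^{2/3})=0$. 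On the limit side, the excursion straddling $T$ has length tending to $0$, because $\sum_i\xi_i^2<\infty$ and there is no infinite excursion. With this added, your argument closes.

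A smaller point concerns Step~3. Since your $\lambda_\pm=\lambda+o(1)$ depend on $n$, you should say that Steps~1--2 hold along any sequence $\lambda_n\to\lambda$: the extra drift $(\lambda_n-\lambda)n^{-1/3}$ sums to $o(n^{1/3})$ on $[0,Tn^{2/3}]$. Both bounds then converge to $\xi_1(\lambda)$ itself. This spares you the continuity of $\xi_1$ in $\lambda$, which the paper's fixed-$\varepsilon$ version needs before letting $\varepsilon\downarrow0$.
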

\begin{proof}
The first assertion is Theorem~\ref{thm:global}, obtained from the
invariance principle on compact sets and uniform control of late
excursions. The second is Theorem~\ref{thm:fixed}, based on a
monotone coupling.
\end{proof}

\subsection{Exploration filtration and exact martingale}
Let $\mathcal F_t$ be the information revealed after $t$ exploration
steps and put $\Delta S_t=S_{t+1}-S_t$.
The following decomposition requires no independence hypothesis.

\begin{theorem}[Doob martingale of the exploration]\label{thm:doob}
Define
\[
 A_t=\sum_{j=0}^{t-1}\EE(\Delta S_j\mid\mathcal F_j),
 \qquad M_t=S_t-S_0-A_t.
\]
Then $(M_t,\mathcal F_t)$ is a martingale and its predictable
bracket is
\begin{equation}\label{eq:bracket}
 \langle M\rangle_t=\sum_{j=0}^{t-1}\Var(\Delta S_j\mid\mathcal F_j).
\end{equation}
If exploration is stopped before any increment exceeds $b_n$,
then, for all $u,v>0$,
\begin{equation}\label{eq:freedman}
 \PP\left\{\max_{s\le t}M_s\ge u,\ \langle M\rangle_t\le v\right\}
 \le\exp\left(-\frac{u^2}{2(v+b_nu/3)}\right).
\end{equation}
\end{theorem}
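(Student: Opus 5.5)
The statement has two parts: the Doob decomposition with its predictable bracket, and a Freedman-type tail bound. We take $\mathcal F_t$ to be the $\sigma$-field generated by everything revealed in the first $t$ exploration steps, so $S_t$ is $\mathcal F_t$-measurable. We also assume each increment is integrable; this is automatic, since $|\Delta S_j|\le n$.

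\emph{Martingale property.} The process $A_t$ is predictable, because $A_{t+1}-A_t=\EE(\Delta S_t\mid\mathcal F_t)$ is $\mathcal F_t$-measurable. It follows that
\[
 \EE(M_{t+1}-M_t\mid\mathcal F_t)=\EE(\Delta S_t\mid\mathcal F_t)-\EE(\Delta S_t\mid\mathcal F_t)=0 .
\]
This uses no independence, only the tower property.

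\emph{Bracket.} Write $D_j=M_{j+1}-M_j=\Delta S_j-\EE(\Delta S_j\mid\mathcal F_j)$. Then
\[
 \EE(D_j^2\mid\mathcal F_j)=\Var(\Delta S_j\mid\mathcal F_j).
\]
Since $M_t^2-\sum_{j<t}\EE(D_j^2\mid\mathcal F_j)$ is a martingale (expand $M_{t+1}^2=M_t^2+2M_tD_t+D_t^2$ and condition), uniqueness of the Doob--Meyer decomposition of $M^2$ gives \eqref{eq:bracket}. The conditional variances are finite because the increments are bounded.

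\emph{Freedman bound.} We interpret the stopping as follows: $\tau=\inf\{j:\Delta S_j>b_n\}$, with the exploration run up to $t\wedge\tau$. The stopped process $M^\tau$ is still a martingale, and its bracket is the stopped bracket. Before $\tau$, the increments satisfy $D_j\le b_n$ in the one-sided sense Freedman requires. For this we use $\Delta S_j\le b_n$ together with $\EE(\Delta S_j\mid\mathcal F_j)\ge -1$; the latter holds because $\Delta S_j\ge-1$. Hence $D_j\le b_n+1$, and we absorb the $+1$ into $b_n$, or define the truncation directly on $D_j$.

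We then run the standard exponential supermartingale argument. For $\theta>0$ and $x\le b$, the inequality $e^{\theta x}\le 1+\theta x+x^2\,g(\theta b)\theta^2$ holds with $g(y)=(e^y-1-y)/y^2$. Applied to $D_j$, it shows that
\[
 Z_t=\exp\bigl(\theta M_t-\theta^2 g(\theta b)\langle M\rangle_t\bigr)
\]
is a nonnegative supermartingale. Let $\sigma$ be the first time $s\le t$ with $M_s\ge u$. On the event $\{\max_{s\le t}M_s\ge u,\langle M\rangle_t\le v\}$ we have $Z_\sigma\ge\exp(\theta u-\theta^2 g(\theta b)v)$, because $\langle M\rangle$ is nondecreasing. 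Optional stopping gives $\EE Z_\sigma\le1$, and therefore the probability is at most $\exp(-\theta u+\theta^2 g(\theta b)v)$.

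Finally we optimise in $\theta$. Choosing $\theta=b^{-1}\log(1+bu/v)$ gives Bennett's form $\exp(-(v/b^2)h(bu/v))$, where $h(x)=(1+x)\log(1+x)-x$. The elementary inequality $h(x)\ge x^2/(2(1+x/3))$ then yields \eqref{eq:freedman}.

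The only delicate point is bookkeeping, not analysis. We must make precise what ``stopped before any increment exceeds $b_n$'' means, so that the centred increments $D_j$, and not just $\Delta S_j$, are bounded above by the constant entering \eqref{eq:freedman}. The clean choice is to stop on $D_j$, or to note that the centering costs at most an additive $1$, which is harmless when $b_n\to\infty$. The remaining steps are standard.
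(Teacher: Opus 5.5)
Your martingale and bracket arguments coincide with the paper's, which rests on the conditional centring of $\Delta S_t$. The paper disposes of \eqref{eq:freedman} by citing Freedman for ``the stopped martingale'', so your explicit Bennett--Freedman derivation expands that citation, and its supermartingale inequality, optional stopping and optimisation are correct.

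\textbf{The gap is the stopping step.} With $\Delta S_j=S_{j+1}-S_j$, the increment $\Delta S_j$ is only $\mathcal F_{j+1}$-measurable. So $\tau=\inf\{j:\Delta S_j>b_n\}$ is not an $(\mathcal F_t)$-stopping time; only $\tau+1$ is. The increments of $M_{t\wedge\tau}$ are $D_j\mathbf 1\{j<\tau\}$, and this indicator depends on $D_j$ itself. The truncation shifts the conditional mean, so $M^\tau$ is \emph{not} a martingale, contrary to your claim. If you stop instead at the genuine stopping time $\tau+1$, the martingale property returns, but the increment at which the threshold is crossed is retained. The pointwise hypothesis $D_j\le b$ behind $e^{\theta x}\le1+\theta x+\theta^2g(\theta b)x^2$ then fails at that step. ``Stopping on $D_j$'' has the same defect. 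With either indexing convention, stopping alone cannot deliver both ``martingale'' and ``all increments $\le b$''.

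\textbf{The repair is short}, because your exponential argument needs only a \emph{nonpositive} conditional mean. Put $\tilde D_j=D_j\mathbf 1\{j<\tau\}$, $\tilde M_t=\sum_{j<t}\tilde D_j$ and $\mu_j=\EE(\Delta S_j\mid\mathcal F_j)$, and note that $\{\tau\ge j\}\in\mathcal F_j$.
\begin{itemize}
\item If $\mu_j\le b_n$, the discarded part $D_j\mathbf 1\{\tau\ge j,\ \Delta S_j>b_n\}$ is nonnegative.
\item If $\mu_j>b_n$, then $\tilde D_j\le0$ outright.
\end{itemize}
In both cases $\EE(\tilde D_j\mid\mathcal F_j)\le0$. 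Moreover $\tilde D_j\le b_n+1$ by your bound $\mu_j\ge-1$, and $\tilde D_j^2\le D_j^2$, so $\sum_{j<t}\EE(\tilde D_j^2\mid\mathcal F_j)\le\langle M\rangle_t$.

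Hence $\exp\{\theta\tilde M_t-\theta^2g(\theta b)\sum_{j<t}\EE(\tilde D_j^2\mid\mathcal F_j)\}$ is still a nonnegative supermartingale. On $\{\tau\ge t\}$ one has $\tilde M_s=M_s$ for all $s\le t$. Your remaining steps therefore give \eqref{eq:freedman} for the event $\{\max_{s\le t}M_s\ge u,\ \langle M\rangle_t\le v,\ \tau\ge t\}$ with $b=b_n+1$. You get $b=b_n$ exactly if the truncation is imposed on $D_j$, using $b_n>0$.

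This event form is the rigorous content of ``stopped before any increment exceeds $b_n$''. The paper's one-line proof also leaves this point implicit.
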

\begin{proof}
The increment
$M_{t+1}-M_t=\Delta S_t-\EE(\Delta S_t\mid\mathcal F_t)$ has
conditional mean zero, proving the martingale assertion
and~\eqref{eq:bracket}.
Inequality~\eqref{eq:freedman} is Freedman's inequality applied
to the stopped martingale~\cite{Freedman75}.
\end{proof}

\begin{proposition}[Exact one-step conditional moments]\label{prop:moments}
At step $t$, let $v_t$ be the explored vertex, $U_t$ the set of
neutral vertices, and $\mathcal E_t$ the family of untested
potential hyperedges containing $v_t$.
For $w,w'\in U_t$, put
\[
 a_t(w)=|\{E\in\mathcal E_t:w\in E\}|,\qquad
 c_t(w,w')=|\{E\in\mathcal E_t:w,w'\in E\}|.
\]
In $H_d(n,p)$, conditionally on $\mathcal F_t$, let $I_t(w)$ be
the indicator that at least one edge of $\mathcal E_t$ containing
$w$ is present. Then, with $q=1-p$,
\begin{align}
 \pi_t(w):=\EE(I_t(w)\mid\mathcal F_t)&=1-q^{a_t(w)},\label{eq:pi}\\
 \Cov(I_t(w),I_t(w')\mid\mathcal F_t)
 &=q^{a_t(w)+a_t(w')-c_t(w,w')}-q^{a_t(w)+a_t(w')}.\label{eq:cov}
\end{align}
Outside the times at which a new component is started,
\begin{align}
 \EE(\Delta S_t\mid\mathcal F_t)
 &=-1+\sum_{w\in U_t}\pi_t(w),\label{eq:exact-drift}\\
 \Var(\Delta S_t\mid\mathcal F_t)
 &=\sum_{w\in U_t}\pi_t(w)(1-\pi_t(w))\notag\\
 &\quad+2\sum_{\{w,w'\}\subset U_t}
       \Cov(I_t(w),I_t(w')\mid\mathcal F_t).\label{eq:exact-var}
\end{align}
\end{proposition}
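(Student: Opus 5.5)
The plan is to compute the conditional law of the indicators $I_t(w)$ directly from the independence of edges in $H_d(n,p)$. Conditionally on $\mathcal F_t$, the exploration has tested only hyperedges revealed before step $t$. The family $\mathcal E_t$ consists of untested potential hyperedges. Each of them is present independently with probability $p$, and this remains true conditionally on $\mathcal F_t$. The reason is that $\mathcal F_t$ is generated by the states of the hyperedges already tested together with the (measurable) exploration choices, and these are disjoint from $\mathcal E_t$. Thus, under $\PP(\cdot\mid\mathcal F_t)$, the variables $\{\mathbf 1_E:E\in\mathcal E_t\}$ are i.i.d.\ Bernoulli$(p)$.

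\emph{First}, $1-I_t(w)$ is the indicator that none of the $a_t(w)$ edges of $\mathcal E_t$ containing $w$ is present. Hence $\PP(I_t(w)=0\mid\mathcal F_t)=q^{a_t(w)}$, which is \eqref{eq:pi}.

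\emph{Second}, for the covariance I would use
\[
\Cov(I_t(w),I_t(w')\mid\mathcal F_t)=\Cov(1-I_t(w),1-I_t(w')\mid\mathcal F_t).
\]
The event $\{I_t(w)=I_t(w')=0\}$ requires absence of every edge in the union of the two families. This union has cardinality $a_t(w)+a_t(w')-c_t(w,w')$ by inclusion--exclusion, so the joint probability is $q^{a_t(w)+a_t(w')-c_t(w,w')}$. Subtracting the product $q^{a_t(w)}q^{a_t(w')}$ gives \eqref{eq:cov}.

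\emph{Third}, away from component starts, the newly discovered vertices at step $t$ are exactly the neutral vertices $w\in U_t$ lying in some present edge of $\mathcal E_t$. This relies on the definition: only $d$-tuples containing $v_t$ and otherwise unexplored vertices are tested, and a neutral vertex is counted once even if it lies in several present edges. So $\xi_t=\sum_{w\in U_t}I_t(w)$ and $\Delta S_t=-1+\sum_w I_t(w)$. Linearity of conditional expectation gives \eqref{eq:exact-drift}. The variance of a sum of indicators expands into diagonal terms $\pi_t(w)(1-\pi_t(w))$ plus twice the sum of pairwise covariances, which gives \eqref{eq:exact-var}. The constant $-1$ does not affect the variance.

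The only delicate step is the first: justifying conditional independence of the untested edges given $\mathcal F_t$. I would argue it by constructing the exploration from a fixed i.i.d.\ family of edge indicators, where each step's choices (next vertex, which tuples to test) are functions of previously revealed indicators only. A standard stopping/optional-revelation argument then shows that unrevealed indicators are independent of $\mathcal F_t$ and retain their Bernoulli$(p)$ law. One must also check that edges in $\mathcal E_t$ containing already-active (not neutral) vertices do not change the count. Such edges contribute to $I_t(w)$ only through neutral $w$, and this is already built into the definition of $a_t(w)$ via $w\in U_t$.
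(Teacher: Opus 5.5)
Your proposal is correct and follows essentially the same route as the paper: conditional independence of the untested edge indicators given $\mathcal F_t$, the complementary event for $\pi_t(w)$, inclusion--exclusion for the joint law, and the identity $\Delta S_t=-1+\sum_{w\in U_t}I_t(w)$, from which the conditional drift and variance follow. The differences are cosmetic. You obtain the covariance through the complements $1-I_t(w)$ and $1-I_t(w')$, using that both vanish with probability $q^{a_t(w)+a_t(w')-c_t(w,w')}$, whereas the paper expands $\PP(I_t(w)=I_t(w')=1\mid\mathcal F_t)$ directly. You also justify the conditional-independence step, which the paper simply asserts.
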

\begin{proof}
Conditionally on $\mathcal F_t$, the indicators of untested
potential edges remain independent Bernoulli variables.
The event $I_t(w)=0$ requires the absence of $a_t(w)$ edges,
proving~\eqref{eq:pi}.
Inclusion--exclusion gives
\[
 \PP(I_t(w)=I_t(w')=1\mid\mathcal F_t)
 =1-q^{a_t(w)}-q^{a_t(w')}+q^{a_t(w)+a_t(w')-c_t(w,w')},
\]
which yields~\eqref{eq:cov}.
Finally, $\Delta S_t=-1+\sum_{w\in U_t}I_t(w)$ outside a
restart; the last two identities follow by taking conditional
expectation and variance.
\end{proof}

\begin{corollary}[Calibration at the start of exploration]
At $t=0$, take
\[
 p=\frac{1+\lambda n^{-1/3}+O(n^{-1})}
          {(d-1)\binom{n-1}{d-1}}.
\]
Then
\[
 \EE\Delta S_0=\lambda n^{-1/3}+O(n^{-1}),\qquad
 \Var(\Delta S_0)=d-1+O(n^{-1/3}).
\]
\end{corollary}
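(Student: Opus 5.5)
At $t=0$ nothing has been revealed. So $U_0$ consists of the $n-1$ vertices other than $v_0$, and $\mathcal E_0$ is the family of all $\binom{n-1}{d-1}$ potential hyperedges containing $v_0$. The plan is to read off the mean and variance directly from Proposition~\ref{prop:moments}, formulas \eqref{eq:exact-drift} and \eqref{eq:exact-var}, after computing the combinatorial counts at $t=0$.

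First I compute the counts. For $w\in U_0$ we have $a_0(w)=\binom{n-2}{d-2}$, and for distinct $w,w'$ we have $c_0(w,w')=\binom{n-3}{d-3}$ (interpreted as $0$ when $d=2$). Since $(n-1)\binom{n-2}{d-2}=(d-1)\binom{n-1}{d-1}$, the chosen $p$ gives
\[
 a_0p=\frac{1+\lambda n^{-1/3}+O(n^{-1})}{n-1}=O(n^{-1}).
\]

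\emph{Drift.} From \eqref{eq:pi}, $\pi_0(w)=1-q^{a_0}=a_0p-\binom{a_0}{2}p^2+O((a_0p)^3)$, uniformly in $w$. Summing over the $n-1$ neutral vertices gives $(n-1)a_0p=1+\lambda n^{-1/3}+O(n^{-1})$. The second-order correction is $(n-1)O((a_0p)^2)=O(n^{-1})$. Here one must check that the $O(n^{-1})$ error term in $p$ and the $O(p^2)$ terms do not produce anything larger, which holds because $a_0p=O(1/n)$. Then \eqref{eq:exact-drift} yields $\EE\Delta S_0=\lambda n^{-1/3}+O(n^{-1})$.

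\emph{Variance.} The diagonal part of \eqref{eq:exact-var} is $\sum_w\pi_0(1-\pi_0)=1+O(n^{-1/3})$. For the covariance part, \eqref{eq:cov} gives
\[
 q^{2a_0}\bigl(q^{-c_0}-1\bigr)=c_0p\,(1+O(n^{-1})).
\]
Summing over the $\binom{n-1}{2}$ pairs and multiplying by $2$, I obtain $(n-1)(n-2)\binom{n-3}{d-3}p$. The identity $(n-1)(n-2)\binom{n-3}{d-3}=(d-1)(d-2)\binom{n-1}{d-1}$ then gives the value $(d-2)(1+O(n^{-1/3}))$. The total is
\[
 1+(d-2)+O(n^{-1/3})=d-1+O(n^{-1/3}).
\]
This is consistent with the heuristic that the number of edges at $v_0$ is approximately Poisson$(1/(d-1))$ and each edge brings $d-1$ vertices: the variance of $(d-1)\cdot\mathrm{Poisson}(1/(d-1))$ is $d-1$.

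The only delicate step is bookkeeping the pair covariances. They are individually $O(n^{-(d-2)}\cdot n^{d-3})$-small but contribute order one in total, so they must be summed exactly via the binomial identity rather than bounded crudely. I would also verify that the $q^{2a_0}=1+O(n^{-1})$ and $\binom{c_0}{2}p^2$ corrections contribute only $O(n^{-1})$ after summation. For $d=2$ this term is simply absent.
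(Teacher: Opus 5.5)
Your proposal is correct and follows the paper's proof essentially step for step: you compute $a_0=\binom{n-2}{d-2}$ and $c_0=\binom{n-3}{d-3}$, insert them into \eqref{eq:exact-drift} and \eqref{eq:exact-var}, and use the two binomial identities to obtain $\sum_w\pi_0(w)=1+\lambda n^{-1/3}+O(n^{-1})$ and an ordered covariance sum of $d-2+O(n^{-1/3})$. One cosmetic slip is in your closing remark: since $p\asymp n^{-(d-1)}$, each pair covariance is of order $c_0p\asymp n^{-2}$ (not $n^{-1}$), which is exactly why the sum over $\asymp n^2$ pairs is of order one.
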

\begin{proof}
Initially $a_0(w)=\binom{n-2}{d-2}$ and, for $w\ne w'$,
$c_0(w,w')=\binom{n-3}{d-3}$, with value zero for $d=2$.
Since $pa_0(w)=(1+\lambda n^{-1/3}+O(n^{-1}))/(n-1)$,
\eqref{eq:exact-drift} and $1-(1-p)^a=pa+O(p^2a^2)$ give the
first assertion. The sum of the individual variances is
$1+O(n^{-1/3})$. Hence
\[
 (n-1)(n-2)p\binom{n-3}{d-3}=d-2+O(n^{-1/3}),
\]
and expansion of~\eqref{eq:cov} shows that this is the total
ordered covariance contribution.
The variance is therefore $1+(d-2)+O(n^{-1/3})$.
\end{proof}

In the independent-edge model, direct revelation of incident
hyperedges first leads to the expansions
\begin{align}
 \EE(\Delta S_t\mid\mathcal F_t)
 &=n^{-1/3}(\lambda-(d-1)s)+\mathcal E_{n,t},
       \qquad s=t/n^{2/3},\label{eq:drift}\\
 \Var(\Delta S_t\mid\mathcal F_t)
 &=d-1+\mathcal V_{n,t}.\label{eq:variance}
\end{align}
The error terms include depletion due to the active set,
repeated discoveries and hyperedges meeting several already
exposed vertices. The rigorous passage is organized around the
following three estimates:
\[
 \sup_{t\le Tn^{2/3}}
 \left|n^{-1/3}\sum_{j<t}\mathcal E_{n,j}\right|\xrightarrow{p}0,
 \qquad n^{-2/3}\sum_{j<Tn^{2/3}}\mathcal V_{n,j}\xrightarrow{p}0,
\]
together with a Lindeberg condition on the stopped martingale
increments. The following criterion collects precisely these
conditions.

\begin{theorem}[Martingale criterion on a finite horizon]\label{thm:criterion}
Fix $T<\infty$. Suppose the two cumulative error estimates displayed
above hold and, for every $\varepsilon>0$,
\begin{equation}\label{eq:lindeberg}
 n^{-2/3}\sum_{j<Tn^{2/3}}
 \EE\left[(\Delta M_j)^2\mathbf1_{\{|\Delta M_j|>\varepsilon n^{1/3}\}}
                       \mid\mathcal F_j\right]\xrightarrow{p}0.
\end{equation}
Then, before reflection and on $[0,T]$,
\[
 n^{-1/3}S_{\lfloor sn^{2/3}\rfloor}
 \Longrightarrow \sqrt{d-1}\,B(s)+\lambda s-\frac{d-1}{2}s^2
\]
in the Skorokhod topology, up to the deterministic convention used
at component starts. Reflection at the running minimum gives
convergence of the reflected processes on $[0,T]$.
\end{theorem}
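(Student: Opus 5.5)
The natural route is the martingale functional central limit theorem for triangular arrays of martingale differences, in the form of Ethier--Kurtz or Jacod--Shiryaev. I would apply it to the Doob decomposition of Theorem~\ref{thm:doob}, $S_t=S_0+A_t+M_t$, after rescaling time by $n^{2/3}$ and space by $n^{1/3}$. Write $X_n(s)=n^{-1/3}S_{\lfloor sn^{2/3}\rfloor}$, $\mathcal A_n(s)=n^{-1/3}A_{\lfloor sn^{2/3}\rfloor}$ and $\mathcal M_n(s)=n^{-1/3}M_{\lfloor sn^{2/3}\rfloor}$. The plan is to prove separately that $\mathcal A_n$ converges in probability, uniformly on $[0,T]$, to the deterministic parabola $\lambda s-\frac{d-1}{2}s^2$, and that $\mathcal M_n\Rightarrow\sqrt{d-1}\,B$. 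Slutsky's lemma then gives the joint limit: the sum of a process converging uniformly in probability to a continuous deterministic function and a tight process converging in law converges in law to their sum. The restart convention changes $S$ by at most the number of components started before time $Tn^{2/3}$. Those restart steps are exactly what the phrase ``up to the deterministic convention'' isolates, so I would treat them as part of the drift bookkeeping.

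\textbf{Drift.} By \eqref{eq:drift},
\[
 \mathcal A_n(s)=n^{-2/3}\sum_{j<sn^{2/3}}\bigl(\lambda-(d-1)j n^{-2/3}\bigr)+n^{-1/3}\sum_{j<sn^{2/3}}\mathcal E_{n,j}.
\]
The first term is a Riemann sum equal to $\lambda s-\frac{d-1}{2}s^2+O(n^{-2/3})$, uniformly for $s\le T$. The second term tends to zero uniformly in probability by the first hypothesis.

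\textbf{Martingale part.} The martingale FCLT needs two inputs. The first is convergence of the predictable bracket: $n^{-2/3}\langle M\rangle_{\lfloor sn^{2/3}\rfloor}\to(d-1)s$ in probability for each $s\le T$. By \eqref{eq:bracket} and \eqref{eq:variance},
\[
 n^{-2/3}\langle M\rangle_{\lfloor sn^{2/3}\rfloor}=(d-1)\lfloor sn^{2/3}\rfloor n^{-2/3}+n^{-2/3}\sum_{j<sn^{2/3}}\mathcal V_{n,j}.
\]
The first term tends to $(d-1)s$. The error term is dominated by the full sum up to $Tn^{2/3}$, which tends to zero by hypothesis. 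One caveat: if $\mathcal V_{n,j}$ can be negative, I would instead use the hypothesis in the form $n^{-2/3}\sum_{j<Tn^{2/3}}|\mathcal V_{n,j}|\to0$, or simply apply it at each $s$, which the stated hypothesis permits if read for all $T$. The second input is the conditional Lindeberg condition \eqref{eq:lindeberg}, which is assumed verbatim. The jumps of $\mathcal M_n$ are $n^{-1/3}\Delta M_j$, so \eqref{eq:lindeberg} is exactly the Lindeberg condition in rescaled units. These two inputs give $\mathcal M_n\Rightarrow\sqrt{d-1}\,B$ in $D[0,T]$, with a continuous limit, and hence also tightness in the uniform topology.

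\textbf{Reflection.} The map $\R f(s)=f(s)-\inf_{u\le s}f(u)$ is Lipschitz for the uniform norm on $[0,T]$ (constant $2$). Since the limit $W_{d,\lambda}$ is continuous, Skorokhod convergence to it is equivalent to local uniform convergence. The continuous mapping theorem therefore gives $\R(X_n)\Rightarrow\R(W_{d,\lambda})$ on $[0,T]$.

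\textbf{Main obstacle.} The only genuinely delicate point is the restart convention. Each new component contributes a $-1$ increment that is not of the form \eqref{eq:exact-drift}, and these increments accumulate in $S$. I would handle them by the standard device: define the walk without the restart increments, so that $S_t$ counts discoveries minus explored vertices. The restart increments then shift $S$ exactly by $-(\text{number of components started})$, which equals $-\bigl(\min_{j\le t}S_j$ of the unshifted walk$\bigr)$ up to a unit correction. This shift is precisely absorbed by the reflection, so the reflected processes coincide and the pre-reflection statement holds for the walk without restarts. The remaining steps are direct applications of the stated hypotheses.
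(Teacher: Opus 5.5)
Your proposal is correct and follows essentially the same route as the paper. Both use the Doob decomposition, the martingale functional CLT (Ethier--Kurtz) from bracket convergence plus the conditional Lindeberg condition, a Riemann sum for the compensator, addition of the two limits, and Lipschitz continuity of reflection at continuous paths. Your extra remarks are refinements that the paper's proof passes over silently, not departures from it: the variance-error estimate is needed at every horizon $s\le T$ (or in absolute value) to control the bracket at intermediate times, and the restart shift is absorbed by reflection because $\R(f+\underline f)=\R f$, where $\underline f$ is the running minimum.
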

\begin{proof}
By~\eqref{eq:bracket}, the variance-error estimate gives, uniformly
for $s\le T$,
\[
 n^{-2/3}\langle M\rangle_{\lfloor sn^{2/3}\rfloor}
                   \xrightarrow{p}(d-1)s.
\]
Together with~\eqref{eq:lindeberg}, this allows application of
the functional central limit theorem for
martingales~\cite{EthierKurtz86}, giving convergence of
$n^{-1/3}M$ to $\sqrt{d-1}\,B$.
Summing~\eqref{eq:drift} and using a Riemann sum gives
\[
 n^{-1/3}A_{\lfloor sn^{2/3}\rfloor}
          \xrightarrow{p}\int_0^s(\lambda-(d-1)u)\,du.
\]
Adding the two limits establishes the unreflected assertion.
The reflection map is Lipschitz for the uniform norm on compact
intervals and continuous, in the Skorokhod topology, at continuous
paths; this proves the last assertion.
\end{proof}

\begin{lemma}[Uniform estimates on a finite horizon]\label{lem:finite}
Fix $T,K<\infty$ and stop exploration at
\[
 \tau_K=\inf\{t:Q_t>Kn^{1/3}\}.
\]
Uniformly for $t\le Tn^{2/3}\wedge\tau_K$, with $s=t/n^{2/3}$,
\begin{align}
 \EE(\Delta S_t\mid\mathcal F_t)
 &=n^{-1/3}(\lambda-(d-1)s)+O_{T,K}(n^{-2/3}),\label{eq:proved-drift}\\
 \Var(\Delta S_t\mid\mathcal F_t)
 &=d-1+O_{T,K}(n^{-1/3}).\label{eq:proved-var}
\end{align}
By~\eqref{eq:proved-var}, the Lindeberg
condition~\eqref{eq:lindeberg} holds for the stopped process.
\end{lemma}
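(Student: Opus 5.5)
Starting from the exact conditional moments of Proposition~\ref{prop:moments}, I will show that the two estimates follow by bookkeeping of the explored set. Put $t\le Tn^{2/3}\wedge\tau_K$. The explored set has at most $t+Q_t+1\le Tn^{2/3}+Kn^{1/3}+1$ vertices, and the active set has at most $Kn^{1/3}$ vertices. Hence $|U_t|=n-t-O_{T,K}(n^{1/3})$ for the neutral set.

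For a neutral $w$, the untested edges through $v_t$ and $w$ are those whose other $d-2$ vertices avoid the discovered-or-processed set. Already tested edges contain an explored vertex processed earlier, so they are excluded automatically. This gives
\[
 a_t(w)=\binom{|U_t|-1}{d-2}
 =\binom{n-2}{d-2}\Bigl(1-\frac{(d-2)t}{n}+O_{T,K}(n^{-2/3})\Bigr).
\]
The bound holds because $t/n=O(n^{-1/3})$, and its square and the active-set correction are both $O(n^{-2/3})$. With $p\binom{n-2}{d-2}=(1+\lambda n^{-1/3}+O(n^{-1}))/(n-1)$, we have $pa_t(w)=O(n^{-1})$. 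The expansion $1-q^{a}=pa+O(p^2a^2)$ then gives $\pi_t(w)=pa_t(w)+O(n^{-2})$. Summing over $|U_t|$ vertices via \eqref{eq:exact-drift} gives
\[
 \EE(\Delta S_t\mid\mathcal F_t)=-1+\frac{(n-t)(1+\lambda n^{-1/3})}{n-1}\Bigl(1-\frac{(d-2)t}{n}\Bigr)+O_{T,K}(n^{-2/3}).
\]
This equals $\lambda n^{-1/3}-(d-1)t/n+O(n^{-2/3})$, which is \eqref{eq:proved-drift} since $t/n=sn^{-1/3}$. The main obstacle is making sure every correction is genuinely $O(n^{-2/3})$ uniformly. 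The cross term $\lambda n^{-1/3}\cdot t/n$ is $O(n^{-2/3})$. The active-queue correction $Kn^{1/3}/n$ is also $O(n^{-2/3})$, and this is exactly where the stopping at $\tau_K$ is used. Finally, the $O(n^{-2})$ per-vertex errors sum to $O(n^{-1})$.

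For the variance I use \eqref{eq:exact-var}. The diagonal sum is $\sum\pi_t(w)-\sum\pi_t(w)^2=1+O(n^{-1/3})$ by the drift computation. For the covariances, $c_t(w,w')=\binom{|U_t|-2}{d-3}$, so $pc_t=O(n^{-2})$ while $q^{a+a'}=1-O(n^{-1})$. Therefore each covariance equals $pc_t(w,w')(1+O(n^{-1}))$. Summing over unordered pairs and doubling gives $|U_t|^2p\binom{|U_t|-2}{d-3}(1+O(n^{-1/3}))$. Exactly as in the calibration corollary, this is $(d-2)+O(n^{-1/3})$, and \eqref{eq:proved-var} follows. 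Restart steps contribute increment $-1$ deterministically. They number at most the components begun before time $t$, and their effect on drift and variance is absorbed in the convention of Theorem~\ref{thm:criterion}.

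For the Lindeberg condition, \eqref{eq:proved-var} bounds the conditional second moments, but that is not enough by itself. I would add a uniform fourth-moment bound $\EE((\Delta M_j)^4\mid\mathcal F_j)=O(1)$. This holds because $\xi_t\le(d-1)\cdot\#\{\text{present edges in }\mathcal E_t\}$, and that count is Binomial with mean $O(1)$. Then $\EE[(\Delta M)^2\mathbf1_{\{|\Delta M|>\varepsilon n^{1/3}\}}\mid\mathcal F]\le\EE[(\Delta M)^4\mid\mathcal F]/(\varepsilon^2n^{2/3})=O(n^{-2/3})$. Summing $Tn^{2/3}$ terms and multiplying by $n^{-2/3}$ gives $O(n^{-2/3})\to0$, which proves \eqref{eq:lindeberg} for the stopped process.
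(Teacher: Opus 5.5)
Your proposal is correct and follows the paper's proof essentially step for step. The shared ingredients are: the exact moments of Proposition~\ref{prop:moments}; explicit counts of $a_t$, $c_t$ and $|U_t|$; stopping at $\tau_K$ so that the active-queue correction $Q_t/n$ is $O(n^{-2/3})$; the covariance sum $|U_t|^2p\,c_t=d-2+O(n^{-1/3})$; and the domination $|\Delta S_t|\le1+(d-1)Z_t$, with $Z_t$ binomial of bounded mean, for the Lindeberg condition. For that last step your fourth-moment bound replaces the paper's integrated Chernoff tail, and you are right that~\eqref{eq:proved-var} alone would not suffice.

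The one difference is the testing convention. The paper tests an edge when its first vertex is processed, so $a_t(w)=\binom{n-t-2}{d-2}$ also counts untested edges through active vertices, and $Q_t$ enters only through $|U_t|$. This is also what makes excursions coincide with whole components when $d\ge3$. Your count $\binom{|U_t|-1}{d-2}$ omits those edges. The two counts differ by a relative $O(Kn^{-2/3})$, so both estimates are unaffected.
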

\begin{proof}
Adopt the following convention: a hyperedge is tested when its
first vertex in exploration order is processed.
After $t$ processed vertices, a still-testable edge containing the
current vertex and a neutral vertex $w$ chooses its remaining
$d-2$ vertices among those not yet processed.
Up to a modification of at most one unit at a component start,
we therefore have exactly
\[
 a_t(w)=\binom{n-t-2}{d-2},\qquad
 c_t(w,w')=\binom{n-t-3}{d-3},
\]
the second quantity being zero for $d=2$, and
\[
 |U_t|=n-t-Q_t+O(1).
\]
These identities remain valid if active vertices occur among the
$d-2$ other vertices, since their presence does not prevent the
edge test; only already processed vertices prevent it.

Put $a=a_t(w)$ and $u=|U_t|$.
Since $pa=O(n^{-1})$, the uniform expansion
$1-(1-p)^a=pa+O(p^2a^2)$ and binomial coefficient ratios give
\[
 upa=1+n^{-1/3}(\lambda-(d-1)s)-\frac{Q_t}{n}+O_T(n^{-2/3}).
\]
On $\{t<\tau_K\}$, the term $Q_t/n$ is $O_K(n^{-2/3})$.
The sum of errors $uO(p^2a^2)$ is $O(n^{-1})$.
Formula~\eqref{eq:exact-drift} therefore gives~\eqref{eq:proved-drift}.

For the variance, the sum of individual variances
in~\eqref{eq:exact-var} is
\[
 u\pi_t(w)(1-\pi_t(w))=1+O_{T,K}(n^{-1/3}).
\]
When $d\ge3$, the exact formula~\eqref{eq:cov} and
$pc_t(w,w')=O(n^{-2})$ give, uniformly,
\[
 q^{2a-c_t(w,w')}-q^{2a}
             =p\binom{n-t-3}{d-3}+O(n^{-3}).
\]
Summing over ordered pairs of neutral vertices,
\[
 u(u-1)p\binom{n-t-3}{d-3}=d-2+O_{T,K}(n^{-1/3}).
\]
For $d=2$, this contribution is zero.
This proves~\eqref{eq:proved-var} in every case.

Finally, if $Z_t$ is the number of present hyperedges among those
tested at step $t$, then
\[
 |\Delta S_t|\le1+(d-1)Z_t.
\]
Conditionally on $\mathcal F_t$, $Z_t$ is binomial with uniformly
bounded mean for $t\le Tn^{2/3}$.
Chernoff's inequality gives, for a constant $C=C(d,T)$ and every
integer $r\ge C$,
\[
 \PP(Z_t\ge r\mid\mathcal F_t)\le(eC/r)^r.
\]
After integrating this tail and summing $O(n^{2/3})$ terms,
the left-hand side of~\eqref{eq:lindeberg} tends to zero.
Conditional centring does not change this conclusion, completing
the proof.
\end{proof}

\begin{remark}[Overshoot control at $\tau_K$]
We have
\[
 \max_{t\le Tn^{2/3}}Z_t=O_{\PP}(\log n),
\]
where $Z_t$ is the number of hyperedges revealed at step $t$.
Indeed, conditionally on $\mathcal F_t$, $Z_t$ has a binomial
distribution with bounded parameters, and Chernoff's inequality
gives $\PP(Z_t\ge c\log n\mid\mathcal F_t)\le n^{-c'}$
for constants $c,c'>0$ depending on $d$ and $T$.
A union bound over $O(n^{2/3})$ steps and the choice $c>3$ give
convergence in probability.
Thus $|\Delta S_t|\le1+(d-1)Z_t=O_{\PP}(\log n)$ uniformly for
$t\le Tn^{2/3}$, ensuring the Lindeberg
condition~\eqref{eq:lindeberg} directly on $[0,Tn^{2/3}]$,
without using the stopping time $\tau_K$.
This control shows that stopping at $\tau_K$ in Lemma~\ref{lem:finite}
is not a structurally necessary hypothesis:
Theorem~\ref{thm:invariance} nevertheless removes it through
Doob's maximal inequality, which provides a more direct route
to the proof of the invariance principle.
\end{remark}

\begin{theorem}[Invariance principle without a finite-horizon hypothesis]\label{thm:invariance}
In $H_d(n,p)$, for each $T<\infty$,
\[
 \left(n^{-1/3}S_{\lfloor sn^{2/3}\rfloor}\right)_{0\le s\le T}
 \Longrightarrow
 \left(\sqrt{d-1}\,B(s)+\lambda s-\frac{d-1}{2}s^2\right)_{0\le s\le T}.
\]
The same convergence holds after reflection at the running minimum.
\end{theorem}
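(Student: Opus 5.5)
The plan is to reduce the statement to Theorem~\ref{thm:criterion} applied to the exploration stopped at $\tau_K$, and then to remove the stopping by showing that $\tau_K>Tn^{2/3}$ with probability tending to one once $K$ is large. For fixed $T,K$, Lemma~\ref{lem:finite} supplies exactly the hypotheses of Theorem~\ref{thm:criterion} for the stopped walk $S^{(K)}_t=S_{t\wedge\tau_K}$. Indeed, summing \eqref{eq:proved-drift} over $t<Tn^{2/3}$ gives a cumulative drift error $n^{-1/3}\cdot Tn^{2/3}\cdot O(n^{-2/3})=O(n^{-1/3})\to0$. Likewise, \eqref{eq:proved-var} gives $n^{-2/3}\sum_j\mathcal V_{n,j}=O(n^{-1/3})$. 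The Lindeberg condition is stated in the lemma. Hence $n^{-1/3}S^{(K)}_{\lfloor sn^{2/3}\rfloor}$ converges on $[0,T]$ to $W_{d,\lambda}$ stopped at the first time its reflection $\R(W_{d,\lambda})$ exceeds $K$. This uses that the hitting time is a.s.\ a continuity point of the functional, since $W_{d,\lambda}$ is a Brownian motion with smooth drift.

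Next I would show that $\lim_{K\to\infty}\limsup_n\PP\{\tau_K\le Tn^{2/3}\}=0$. Since $Q_t=S_t-\min_{j\le t}S_j\le 2\max_{j\le t}|S_j|$, it suffices to bound $\max_{t\le Tn^{2/3}\wedge\tau_K}|S_t|$ in probability by $o(K)n^{1/3}$ uniformly in $K$. I would use the Doob decomposition of Theorem~\ref{thm:doob}, $S=M+A$. For the compensator, \eqref{eq:proved-drift} holds on $\{t<\tau_K\}$ with a leading term $n^{-1/3}(\lambda-(d-1)s)$ independent of $K$. The $K$-dependent piece is $-Q_t/n$, which contributes at most $Tn^{2/3}\cdot Kn^{1/3}/n=TK n^{0}\cdot n^{-0}$; rescaled by $n^{-1/3}$ this is $O(TKn^{-1/3})\to0$. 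Thus $n^{-1/3}\sup|A|\le |\lambda|T+(d-1)T^2/2+o(1)$ uniformly in $K$. For the martingale part I would apply Doob's $L^2$ maximal inequality to $M_{t\wedge\tau_K}$. This gives $\EE\max_{t\le Tn^{2/3}}M_{t\wedge\tau_K}^2\le4\EE\langle M\rangle_{Tn^{2/3}\wedge\tau_K}$. Crucially, the conditional variance should be bounded \emph{without} using $\tau_K$: from \eqref{eq:exact-var}, $\Var(\Delta S_t\mid\mathcal F_t)\le C(d,T)$ always, since $|U_t|\le n$ and the $a_t,c_t$ formulas in the proof of Lemma~\ref{lem:finite} do not involve $Q_t$. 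Hence $\EE\max|M|^2\le 4C Tn^{2/3}$. Chebyshev then gives $\PP\{n^{-1/3}\max|M|>K/4\}\le 64CT/K^2$, uniformly in $n$. Combining the two parts yields the claim.

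On the event $\{\tau_K>Tn^{2/3}\}$ the stopped and unstopped walks agree on $[0,Tn^{2/3}]$. The limit process $W_{d,\lambda}$ agrees with its $K$-stopped version on $[0,T]$ with probability $\to1$ as $K\to\infty$. A standard approximation argument then transfers the convergence to the unstopped walk: for bounded Lipschitz functionals $F$, the difference $|\EE F(\cdot)-\EE F(\cdot^{(K)})|$ is at most $2\|F\|_\infty$ times the two exceptional probabilities. Finally, reflection is a continuous map at continuous paths, as noted in the proof of Theorem~\ref{thm:criterion}, which gives the reflected statement. The component-start convention changes $S$ by $-1$ per new component. Its cumulative effect is absorbed in the running minimum, since these steps occur exactly when $Q=0$, so it needs no separate treatment.

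The main obstacle is the uniform-in-$K$ tightness step. One must check carefully that the variance bound and the leading drift really hold before $\tau_K$ with constants independent of $K$, so that only the harmless $Q_t/n$ term carries $K$. If that bookkeeping fails, I would fall back on the overshoot remark: increments are $O_{\PP}(\log n)$, and Freedman's inequality \eqref{eq:freedman} with $v=CTn^{2/3}$, $u=Kn^{1/3}/4$ and $b_n=C\log n$ gives the same exponential tail $\exp(-cK^2/T)$.
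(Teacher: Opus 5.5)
Your proposal is essentially the paper's own proof. Lemma~\ref{lem:finite} feeds Theorem~\ref{thm:criterion} up to $\tau_K$, the rescaled compensator is bounded independently of $K$ (the $Q_t/n$ term costs only $KTn^{-1/3}$), and Doob's maximal inequality together with $\|\R f\|_\infty\le2\|f\|_\infty$ yields $\lim_{K\to\infty}\limsup_n\PP(\tau_K\le Tn^{2/3})=0$, after which the stopping is removed and reflection is handled by continuity.

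The differences are minor. You bound $\Var(\Delta S_t\mid\mathcal F_t)$ directly from \eqref{eq:exact-var}, so the Doob constant is visibly independent of $K$, whereas the paper reads it off \eqref{eq:proved-var} for large $n$; you also add a Freedman fallback. There is one point, glossed in the paper too: the walk frozen at $\tau_K$ fails the bracket hypothesis of Theorem~\ref{thm:criterion} after $\tau_K$. Since your tightness estimate does not rely on the stopped convergence, you can use it instead to check the hypotheses of Theorem~\ref{thm:criterion} in probability for the unstopped walk directly, which avoids that issue.
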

\begin{proof}
For fixed $K$, Lemma~\ref{lem:finite} verifies all hypotheses of
Theorem~\ref{thm:criterion} up to $\tau_K$.
It remains to remove the stopping.
On $[0,Tn^{2/3}\wedge\tau_K]$, the total variation of the predictable
part, divided by $n^{1/3}$, is bounded by a constant $C_T+o(1)$
independent of $K$.
Indeed, the additional term $Q_t/n$, stopped at $Kn^{1/3}$,
contributes at most $KTn^{-1/3}=o(1)$ for fixed $K$.
By~\eqref{eq:proved-var}, there is $C_T<\infty$ such that
\[
 \EE\langle M\rangle_{Tn^{2/3}\wedge\tau_K}\le C_Tn^{2/3}.
\]
Doob's maximal inequality then gives, for $K>4C_T$,
\[
 \PP\left\{\max_{t\le Tn^{2/3}\wedge\tau_K}|M_t|
                         >\frac K4n^{1/3}\right\}
 \le\frac{16C_T}{K^2}.
\]
Since $\|\R f\|_\infty\le2\|f\|_\infty$ when $f(0)=0$, the event
$\{\tau_K\le Tn^{2/3}\}$ is contained in the preceding event for
all sufficiently large $n$.
Consequently,
\[
 \lim_{K\to\infty}\limsup_{n\to\infty}\PP(\tau_K\le Tn^{2/3})=0.
\]
Stopped convergence then implies unstopped convergence.
Continuity of reflection at continuous paths gives the last
assertion.
\end{proof}

\begin{lemma}[Subcritical domination after a macroscopic time]\label{lem:late}
There are constants $c,C,T_0>0$, depending only on $d$ and a compact
set of $\lambda$-values, such that, for $T\ge T_0$ and $t\ge Tn^{2/3}$,
exploration of a still-undiscovered component, conditionally on
$\mathcal F_t$, is dominated by a Galton--Watson process whose
offspring $X_{n,t}$ has a uniformly bounded exponential moment and
satisfies
\[
 \EE X_{n,t}\le1-cTn^{-1/3}.
\]
If $\mathcal T_{n,t}$ is its total progeny, then, for $k\ge1$,
\begin{align}
 \PP(\mathcal T_{n,t}\ge k\mid\mathcal F_t)
 &\le Ck^{-1/2}\exp(-cT^2kn^{-2/3}),\label{eq:late-tail}\\
 \EE(\mathcal T_{n,t}\mid\mathcal F_t)
 &\le Cn^{1/3}/T.\label{eq:late-mean}
\end{align}
\end{lemma}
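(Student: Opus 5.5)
The plan is to build the dominating Galton--Watson process directly from the exploration mechanism of Lemma~\ref{lem:finite}. Conditionally on $\mathcal F_t$, the vertices already processed number at least $t\ge Tn^{2/3}$. Exploration of a new component started at time $t'\ge t$ tests, for each processed vertex, only hyperedges whose other $d-1$ vertices are still unprocessed, and these hyperedges are independent Bernoulli$(p)$. Each vertex processed inside the new component therefore discovers at most $(d-1)Z$ new vertices, where $Z$ is binomial with parameters $\binom{n-t-1}{d-1}$ and $p$; this is an upper bound because processed vertices only shrink the pool. We set $X_{n,t}=(d-1)Z$ with $Z\sim\mathrm{Bin}(\binom{n-t-1}{d-1},p)$. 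Then a standard coupling, revealing edges in exploration order and discarding collisions, shows that the component's size is at most the total progeny $\mathcal T_{n,t}$.

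For the mean, write $(d-1)\binom{n-t-1}{d-1}p=\binom{n-t-1}{d-1}\binom{n-1}{d-1}^{-1}(1+\lambda n^{-1/3}+O(n^{-1}))$. The ratio of binomial coefficients is at most $(1-t/n)^{d-1}\le 1-(d-1)t/n+O(t^2/n^2)$, and it is monotone in $t$. For $Tn^{2/3}\le t\le n/2$ this gives $\EE X_{n,t}\le 1+\lambda n^{-1/3}-\tfrac{d-1}{2}Tn^{-1/3}\le 1-cTn^{-1/3}$ once $T_0\ge 4|\lambda|_{\max}/(d-1)$. For larger $t$ the bound $\EE X\le 2^{-(d-1)}(1+o(1))$ is even stronger, and since $T n^{-1/3}\lesssim 1$ is irrelevant there, we may cap $c$ accordingly. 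The exponential moment is uniform because $\EE e^{\theta Z}\le\exp\{\binom{n-1}{d-1}p(e^\theta-1)\}$ with $\binom{n-1}{d-1}p=O(1)$.

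The tail bound~\eqref{eq:late-tail} is where the main work lies. We would use the hitting-time (Otter--Dwass) identity $\PP(\mathcal T=k)=k^{-1}\PP(\sum_{i\le k}X_i=k-1)$ and combine it with a Chernoff bound with tilt $\theta>0$ small. Since $\log\EE e^{\theta(X-1)}\le -\varepsilon\theta+C\theta^2$ with $\varepsilon=cTn^{-1/3}$, optimizing at $\theta=\varepsilon/(2C)$ gives $\exp(-k\varepsilon^2/(4C))=\exp(-c'T^2kn^{-2/3})$. The factor $k^{-1/2}$ requires a local bound: we would either use a uniform local limit estimate $\PP(\sum X_i=k-1)\le Ck^{-1/2}e^{-c k\varepsilon^2}$, obtained after exponential tilting (the tilted law has variance bounded below, since $X$ takes the values $0$ and $d-1$ with probabilities bounded away from zero, giving a lattice of span $d-1$ compatible with $k-1$), or simply sum the tilted local estimate $k^{-3/2}e^{-ck\varepsilon^2}$ over $j\ge k$. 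That sum yields $Ck^{-1/2}e^{-ck\varepsilon^2}$. Uniformity of the local CLT over $n,t$ is the delicate point; it follows because the family of tilted binomial laws lies in a compact set with nondegenerate variance, so Berry--Esseen type local bounds hold uniformly.

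Finally,~\eqref{eq:late-mean} is immediate: $\EE\mathcal T=1/(1-\EE X)\le n^{1/3}/(cT)$. Alternatively, it follows by summing~\eqref{eq:late-tail}, since $\sum_k k^{-1/2}e^{-ck\varepsilon^2}\asymp \varepsilon^{-1}$.
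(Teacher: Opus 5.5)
Your proposal is correct and follows the paper's proof essentially step by step. It uses the same domination of the offspring by $(d-1)\operatorname{Bin}\bigl(\binom{n-t-1}{d-1},p\bigr)$ and the same mean bound via $(1-t/n)^{d-1}$. It then uses Dwass's hitting-time formula with exponential tilting at scale $\varepsilon=cTn^{-1/3}$ to get $\PP(\mathcal T_{n,t}=j)\le Cj^{-3/2}e^{-c\varepsilon^2j}$, sums over $j\ge k$, and takes $\EE\mathcal T_{n,t}=(1-\EE X_{n,t})^{-1}$ for the mean; you in fact give more detail than the paper on the uniform local bound.

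One small repair is needed in that local bound. The nondegeneracy you invoke (atoms at $0$ and $d-1$ bounded away from zero) fails when $n-t=o(n)$, since then $\binom{n-t-1}{d-1}p\to0$. You can avoid this by using the larger dominating offspring $X_{n,\min(t,\lfloor n/2\rfloor)}$, which still has mean at most $1-cTn^{-1/3}$ once $c\le1/4$.
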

\begin{proof}
At time $t$, each processed vertex tests at most
$\binom{n-t-1}{d-1}$ still-possible hyperedges.
Ignoring collisions and counting $d-1$ new vertices for each
present edge gives the domination
\[
 X_{n,t}\preceq(d-1)\operatorname{Bin}
                       \left(\binom{n-t-1}{d-1},p\right).
\]
Collisions and already active vertices can only decrease the
true offspring. Uniformly for $t\ge Tn^{2/3}$,
\[
 \EE X_{n,t}
 \le(1+\lambda n^{-1/3}+O(n^{-1}))(1-t/n)^{d-1}
 \le1-cTn^{-1/3}
\]
as soon as $T\ge T_0$.
The exponential transform of $X_{n,t}$ is uniformly finite in a
fixed neighbourhood of the origin, since the binomial mean number
of edges is bounded.

Put $\varepsilon=cTn^{-1/3}$.
The dominating law $(d-1)\operatorname{Bin}(\binom{n-t-1}{d-1},p)$
is integer-valued, has offspring independent from generation to
generation, and has Laplace transform $\EE[e^{\theta X_{n,t}}]$
entire in $\theta$ (because a binomial has a finite exponential
moment whenever $p=O(1/n)$).
These three conditions are exactly those under which Dwass's
cyclic formula applies.
Exponential tilting with a parameter of order $\varepsilon$
gives, uniformly,
\[
 \PP(\mathcal T_{n,t}=j)\le Cj^{-3/2}e^{-c\varepsilon^2j}.
\]
Summing over $j\ge k$ establishes~\eqref{eq:late-tail}.
Finally, for a subcritical Galton--Watson process,
$\EE\mathcal T_{n,t}=(1-\EE X_{n,t})^{-1}$,
which gives~\eqref{eq:late-mean}.
\end{proof}

\begin{lemma}[Active queue at the cutoff time]\label{lem:cutoff}
For every compact set of $\lambda$-values, there exist
$C,c,T_0>0$ such that, for $T\ge T_0$ and $x\ge0$,
\[
 \limsup_{n\to\infty}
 \PP\left(Q_{\lfloor Tn^{2/3}\rfloor}>x\frac{n^{1/3}}T\right)
 \le Ce^{-cx}.
\]
Thus $Q_{\lfloor Tn^{2/3}\rfloor}=O_{\PP}(n^{1/3}/T)$ uniformly
in the double limit $n\to\infty$, then $T\to\infty$.
\end{lemma}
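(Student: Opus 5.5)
We need a tail bound for $Q_{\lfloor Tn^{2/3}\rfloor}$ at scale $n^{1/3}/T$. By construction $Q_t=S_t-\min_{j\le t}S_j$ is the number of active vertices of the component being explored at time $t$. The plan is to show that after a macroscopic time the walk has strongly negative drift, so it cannot sit far above its running minimum. This is the discrete analogue of the fact that $\R(W_{d,\lambda})(T)$ is, for large $T$, comparable to the stationary reflected Brownian motion with drift $\lambda-(d-1)T\approx-(d-1)T$ and variance $d-1$. That stationary law is exponential with mean $1/(2T)$, which gives exactly the claimed $n^{1/3}/T$ scale.

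\emph{Step 1 (limit identification).} By Theorem~\ref{thm:invariance}, together with continuity of reflection, $n^{-1/3}Q_{\lfloor Tn^{2/3}\rfloor}\Rightarrow \R(W_{d,\lambda})(T)$. The set $\{y>xT^{-1}\}$ is open, so the portmanteau theorem gives
\[
 \limsup_n\PP\bigl(Q_{\lfloor Tn^{2/3}\rfloor}>xn^{1/3}/T\bigr)\le\PP\bigl(\R(W_{d,\lambda})(T)\ge x/T\bigr).
\]
Hence it suffices to bound the continuum quantity; no discrete uniformity is needed.

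\emph{Step 2 (continuum bound).} Time reversal gives
\[
 \R(W)(T)=\sup_{0\le u\le T}\bigl(W(T)-W(u)\bigr).
\]
Put $r=T-u$. Then
\[
 W(T)-W(T-r)=\sqrt{d-1}\,\tilde B(r)+\lambda r-\tfrac{d-1}{2}\bigl(T^2-(T-r)^2\bigr),
\]
where $\tilde B$ is a Brownian motion. The deterministic term is at most $\lambda r-\frac{d-1}{2}Tr$ for $r\le T$. For $T\ge T_0:=4|\lambda|_{\max}/(d-1)$ this drift is $\le-\mu r$ with $\mu=\frac{d-1}{4}T$. Therefore $\R(W)(T)$ is stochastically dominated by
\[
 \sup_{r\ge0}\bigl(\sqrt{d-1}\,\tilde B(r)-\mu r\bigr),
\]
which is exponential with rate $2\mu/(d-1)=T/2$. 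This yields
\[
 \PP\bigl(\R(W)(T)\ge x/T\bigr)\le e^{-x/2},
\]
so the lemma holds with $C=1$ and $c=1/2$.

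\emph{Alternative discrete route.} One could instead argue directly with Lemma~\ref{lem:late}. For $t$ in $[Tn^{2/3}/2,Tn^{2/3}]$ the drift is $\le -cTn^{-1/3}$ and the increments have uniform exponential moments. An exponential supermartingale $e^{\theta(S_{t}-S_{u})}$ with $\theta\asymp Tn^{-1/3}$, combined with a union bound over starting points $u$ in the window, handles excursions that start late. A last-exit decomposition, using the $O_{\PP}(n^{1/3})$ size of $Q$ at time $Tn^{2/3}/2$ and the negative drift $T\cdot Tn^{2/3}/2\cdot n^{-1/3}\gg n^{1/3}$, handles excursions that start early.

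\emph{Main obstacle.} In the discrete route the main difficulty is the union bound over starting times. It costs a polynomial factor, which must be absorbed by a maximal inequality rather than a pointwise one. This is why I prefer the weak-limit route: the stated bound is only a $\limsup_n$ for fixed $T$, so Step 1 makes the uniformity issues disappear. The one point to check is that Theorem~\ref{thm:invariance} provides convergence at the fixed time $T$ after reflection. It does, because the limit is continuous and the evaluation map is continuous at continuous paths.
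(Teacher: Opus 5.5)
Your proposal is correct and follows the paper's proof essentially step for step. Both arguments take the weak limit at the fixed time $T$ from Theorem~\ref{thm:invariance}, then use the time reversal $\sup_{0\le r\le T}\{\sqrt{h}\,\tilde B(r)+(\lambda-hT)r+\frac{h}{2}r^2\}$ with $h=d-1$, dominate it by the all-time supremum of Brownian motion with drift $-hT/4$, and read off the exponential law with rate $T/2$, giving $C=1$ and $c=1/2$.

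Two minor fixes. The portmanteau bound you write comes from the closed set $[x/T,\infty)\supset(x/T,\infty)$, not from openness of $\{y>x/T\}$, since open sets only give $\liminf$ lower bounds. Also take $T_0>0$ strictly, e.g.\ $T_0=4(1+\max|\lambda|)/(d-1)$ as the paper does.
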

\begin{proof}
First fix $T<\infty$. Theorem~\ref{thm:invariance}, evaluated at
time $T$, gives
\[
 n^{-1/3}Q_{\lfloor Tn^{2/3}\rfloor}
                         \Longrightarrow\R(W_{d,\lambda})(T).
\]
It therefore suffices to obtain an estimate uniform in $T$ for
the limit variable. Put $h=d-1$.
By reversing Brownian increments,
\[
 \R(W_{d,\lambda})(T)\overset{\mathrm{law}}=
 \sup_{0\le r\le T}
 \left\{\sqrt h\,\widetilde B(r)+(\lambda-hT)r+\frac h2r^2\right\},
\]
where $\widetilde B$ is standard Brownian motion.
For $T\ge T_0:=4(1+|\lambda|)/h$ and $0\le r\le T$,
\[
 (\lambda-hT)r+\frac h2r^2\le-\frac{hT}{4}r.
\]
Consequently,
\[
 \R(W_{d,\lambda})(T)\preceq
 \sup_{r\ge0}\left\{\sqrt h\,\widetilde B(r)-\frac{hT}{4}r\right\}.
\]
The supremum of Brownian motion of variance $h$ and drift $-hT/4$
has an exponential distribution with parameter $T/2$:
\[
 \PP\left(\sup_{r\ge0}
       \{\sqrt h\,\widetilde B(r)-hTr/4\}>y\right)=e^{-Ty/2},
 \qquad y\ge0.
\]
Taking $y=x/T$, then applying the Portmanteau theorem to the
closed set $[x/T,\infty)$, gives
\[
 \limsup_{n\to\infty}
 \PP\left(Q_{\lfloor Tn^{2/3}\rfloor}>x\frac{n^{1/3}}T\right)
 \le e^{-x/2}.
\]
The proof is uniform for $\lambda$ in a compact set after a
uniform choice of $T_0$.
\end{proof}

\begin{theorem}[Global excursion convergence in the binomial model]\label{thm:global}
In $H_d(n,p)$,
\[
 n^{-2/3}(|\mathcal C_1|,|\mathcal C_2|,\ldots)
 \Longrightarrow(\xi_1,\xi_2,\ldots)\quad\text{in }\ell^2_\downarrow.
\]
\end{theorem}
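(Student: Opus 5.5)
The proof follows Aldous's scheme~\cite{Ald97}: convergence of excursions on compact time intervals, combined with a tightness estimate in $\ell^2$ for excursions started late.

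\emph{Step 1: finite horizon.} Fix $T\ge T_0$. By Theorem~\ref{thm:invariance} and Skorokhod representation, we may assume $n^{-1/3}Q_{\lfloor sn^{2/3}\rfloor}\to\R(W_{d,\lambda})(s)$ uniformly on $[0,T]$ almost surely. The limit process has the standard properties that Aldous establishes for $W_{2,\lambda}$, and they carry over after the Brownian rescaling $h=d-1$:
\begin{itemize}
\item[(a)] the set of zeros of $\R(W_{d,\lambda})$ has no isolated points and Lebesgue measure zero;
\item[(b)] no two excursions have equal lengths;
\item[(c)] every excursion endpoint is a time at which the running infimum strictly decreases, i.e.\ the path goes strictly below its previous minimum immediately afterwards.
\end{itemize}
Under these properties, uniform convergence of the walks implies convergence of the ordered excursion lengths of the discrete reflected walk, restricted to excursions contained in $[0,T]$, towards those of the limit. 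There is one discretization detail. The discrete walk has restart steps of size $-1$, so component ends are exactly the new minima. Condition (c) rules out spurious limiting excursions coming from long components whose discrete walk hovers near its minimum without attaining it.

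\emph{Step 2: late components.} Write $\xi^{(>T)}$ for the excursions of the limit that start after time $T$, and $\mathcal C^{(>T)}$ for the components whose exploration starts after $Tn^{2/3}$, including the component straddling $T$. It suffices to show
\[
 \lim_{T\to\infty}\limsup_{n\to\infty}
 \PP\Bigl(n^{-4/3}\sum_{\mathcal C\in\mathcal C^{(>T)}}|\mathcal C|^2>\varepsilon\Bigr)=0,
\]
together with the analogous statement for the limit, namely $\sum\xi_i^2\mathbf1\{\text{start}>T\}\to0$ almost surely. The limit statement follows from the parabolic drift: $\R(W_{d,\lambda})$ has finitely many excursions of length $\ge\delta$, and $\sum\xi_i^2<\infty$ almost surely.

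For the discrete bound, first treat the straddling component. Its remaining size is governed by $Q_{\lfloor Tn^{2/3}\rfloor}=O_{\PP}(n^{1/3}/T)$ (Lemma~\ref{lem:cutoff}). Each of these active vertices roots a subtree dominated by the subcritical Galton--Watson process of Lemma~\ref{lem:late}, so by \eqref{eq:late-mean} the remaining part has expected size $O(n^{1/3}/T)\cdot n^{1/3}/T=O(n^{2/3}/T^2)$. For the genuinely new components, use the identity $\sum_{\mathcal C}|\mathcal C|^2=\sum_v|\mathcal C(v)|$. This gives
\[
 \EE\Bigl[\sum_{\mathcal C\in\mathcal C^{(>T)}}|\mathcal C|^2\Bigm|\mathcal F_{Tn^{2/3}}\Bigr]\le n\cdot\sup_t\EE(\mathcal T_{n,t}\mid\mathcal F_t)\le Cn^{4/3}/T,
\]
by \eqref{eq:late-mean} and monotonicity of the domination in $t$. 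Markov's inequality then yields a bound $C/(\varepsilon T)$, which tends to zero as $T\to\infty$.

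\emph{Step 3: combination.} A standard $\ell^2_\downarrow$ approximation argument finishes the proof: for the ordered sequences, the distance is bounded by the finite-horizon discrepancy plus the two tail norms.

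The main obstacle is Step 1. Transferring uniform convergence of reflected paths into convergence of excursion lengths requires properties (a)--(c) for the drifted process $W_{d,\lambda}$. I would obtain these from Brownian scaling: $W_{d,\lambda}(s)\overset{\mathrm{law}}=h^{1/3}\cdots W_{2,\lambda h^{-2/3}}(h^{1/3}s)$, up to the normalization of Aldous's process. This reduces them to Aldous's lemmas rather than reproving them.
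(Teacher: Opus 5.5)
Your proposal follows the paper's proof essentially step for step: finite-horizon excursion convergence from Theorem~\ref{thm:invariance} plus Aldous's excursion-continuity lemma, then the size-biasing identity with the Galton--Watson bound~\eqref{eq:late-mean} for late components, and Lemma~\ref{lem:cutoff} combined with~\eqref{eq:late-mean} for the residual of the component straddling the cutoff. The only slip is bookkeeping at the cutoff: you put the whole straddling component into the tail but bound only its residual $R_{n,T}$, and your limit-side decomposition omits the excursion of $\R(W_{d,\lambda})$ straddling $T$; fix this either by counting the pre-cutoff part among the excursions observed on $[0,T]$, as the paper does, or by noting that its rescaled length converges to $T-g_T$, where $g_T$ is the last zero of $\R(W_{d,\lambda})$ before $T$, and that $T-g_T\to0$ in probability as $T\to\infty$.
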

\begin{proof}
On $[0,T]$, Theorem~\ref{thm:invariance} and continuity of
reflection give uniform convergence in law to $\R(W_{d,\lambda})$.
Almost surely, this process has no interval of zeros, its finite
excursions are isolated above every positive length threshold,
and their lengths are distinct.
The deterministic excursion-continuity lemma, in the form used
in the construction of the multiplicative coalescent~\cite{Ald97}
and in the metric analysis of~\cite{ABG12},
therefore implies convergence of every finite family of excursions
of length at least $\eta>0$ contained in $[0,T]$.

It remains to eliminate successively $\eta\downarrow0$ and
$T\to\infty$. On $[0,T]$, the sum of squared lengths of excursions
of length at most $\eta$ is deterministically bounded by
\[
 \sum_{i:|I_i|\le\eta}|I_i|^2
       \le\eta\sum_i|I_i|\le\eta T.
\]
The same inequality holds for the discrete excursions after
division by $n^{2/3}$, up to a mesh error $o(1)$.
It therefore allows $\eta$ to tend to zero uniformly after
convergence on the compact interval.
Consider components whose exploration begins after $Tn^{2/3}$.
The size-biasing identity
\[
 \sum_i|\mathcal C_i|^2=\sum_{v=1}^n|\mathcal C(v)|
\]
expresses the sum of squared sizes as a sum over vertices.
For a late vertex $v$, $|\mathcal C(v)|$ is bounded by the total
progeny $\mathcal T_{n,t_v}$ of the Galton--Watson process of
Lemma~\ref{lem:late}, started at the exploration time
$t_v\ge Tn^{2/3}$ of $v$.
Galton--Watson domination and bound~\eqref{eq:late-mean} thus give,
by the law of total expectation,
\[
 \begin{split}
 \EE\left[n^{-4/3}\sum_{\mathcal C_i\ {\rm late}}|\mathcal C_i|^2\right]
 &\le n^{-4/3}\sum_{v\ {\rm late}}\EE|\mathcal C(v)|\\
 &\le n^{-4/3}\cdot n\cdot\frac{Cn^{1/3}}T=\frac CT.
 \end{split}
\]
The component possibly in progress at time $Tn^{2/3}$ is handled
by continuing to the next return of $Q_t$ to zero.
Conditionally on $Q_{\lfloor Tn^{2/3}\rfloor}=q$, the domination
of Lemma~\ref{lem:late}, started with $q$ ancestors, bounds the
number $R_{n,T}$ of vertices still to be explored by a sum of $q$
total progenies. Thus
\[
 \EE(R_{n,T}\mid\mathcal F_{\lfloor Tn^{2/3}\rfloor})
       \le\frac{Cn^{1/3}}T Q_{\lfloor Tn^{2/3}\rfloor}.
\]
Lemma~\ref{lem:cutoff} gives, with uniform quantifiers,
$Q_{\lfloor Tn^{2/3}\rfloor}=O_{\PP}(n^{1/3}/T)$.
On the event $\{Q_{\lfloor Tn^{2/3}\rfloor}\le An^{1/3}/T\}$,
conditional Markov's inequality gives, for every $\delta>0$,
\[
 \PP(R_{n,T}>\delta n^{2/3}\mid\mathcal F_{\lfloor Tn^{2/3}\rfloor})
                   \le\frac{CA}{\delta T^2}.
\]
Combining this estimate with Lemma~\ref{lem:cutoff} gives explicitly
\[
 \limsup_{n\to\infty}\PP(R_{n,T}>\delta n^{2/3})
                \le Ce^{-cA}+\frac{CA}{\delta T^2}.
\]
First choose $A$ sufficiently large, then $T$ sufficiently large.
It follows that
\[
 \lim_{T\to\infty}\limsup_{n\to\infty}
              \PP(R_{n,T}>\delta n^{2/3})=0.
\]
The part of this component before the cutoff time already belongs
to the excursions observed on $[0,T]$; only the residual length
$R_{n,T}$ must therefore be added.
Its effect on the $\ell^2_\downarrow$ norm tends to zero with $T$.
By Markov's inequality,
\[
 \lim_{T\to\infty}\limsup_{n\to\infty}
 \PP\left\{n^{-4/3}\sum_{\mathcal C_i\ {\rm late}}|\mathcal C_i|^2
                                                        >\delta\right\}=0.
\]
The same property holds for the limiting process, whose parabolic
drift tends to $-\infty$.
Excursion convergence on compact intervals, together with these
two tail estimates, gives convergence in $\ell^2_\downarrow$.
\end{proof}

\begin{theorem}[Transfer to the fixed-edge model]\label{thm:fixed}
For $M$ given by~\eqref{eq:window},
\[
 n^{-2/3}L_n\Longrightarrow\xi_1
\]
in $H_d(n,M)$.
\end{theorem}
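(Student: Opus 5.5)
We transfer Theorem~\ref{thm:global} from $H_d(n,p)$ to $H_d(n,M)$ by a sandwich argument based on the monotonicity of $L_n$ under edge addition. Fix $\varepsilon>0$ and set $\lambda_\pm=\lambda\pm\varepsilon$. We choose $p_\pm$ so that $(d-1)\binom{n-1}{d-1}p_\pm=1+\lambda_\pm n^{-1/3}$, and let $E_\pm\sim\operatorname{Bin}(N,p_\pm)$ be the edge counts. We compare the means with $M$ from~\eqref{eq:window}. Since $N(d-1)\binom{n-1}{d-1}^{-1}=n/d$, we get
\[
 \EE E_\pm=\frac{n}{d(d-1)}\bigl(1+\lambda_\pm n^{-1/3}\bigr).
\]
Hence $\EE E_\pm-M=\pm\frac{\varepsilon}{d(d-1)}n^{2/3}+O(1)$. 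The standard deviation of $E_\pm$ is $O(n^{1/2})=o(n^{2/3})$, so Chebyshev gives $\PP\{E_-\le M\le E_+\}\to1$.

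\textbf{Coupling.} By~\eqref{eq:conditioning}, conditionally on $E=k$ the binomial model is uniform on $k$-edge hypergraphs. We realize the whole process $(H_d(n,k))_{0\le k\le N}$ as the random-graph process: a uniformly random ordering of the $N$ potential hyperedges, with the first $k$ kept. Taking $E_\pm$ independent of the ordering gives an exact copy of $H_d(n,p_\pm)$ as the first $E_\pm$ edges. On the event $E_-\le M\le E_+$ we have the inclusions $H_d(n,p_-)\subseteq H_d(n,M)\subseteq H_d(n,p_+)$. The largest component is monotone under adding edges, so $L_n^{-}\le L_n\le L_n^{+}$ with probability $1-o(1)$. (A coupling of $E_-$ and $E_+$ with each other is unnecessary; only the marginals matter.)

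\textbf{Limits.} Theorem~\ref{thm:global} applies with parameters $\lambda_\pm$: the coordinate projection $\ell^2_\downarrow\to\mathbb R$ onto the first entry is continuous, so $n^{-2/3}L_n^\pm\Rightarrow\xi_1^{(\lambda_\pm)}$, the longest excursion of $\R(W_{d,\lambda\pm\varepsilon})$. This gives, for every $a$,
\[
 \PP\{\xi_1^{(\lambda+\varepsilon)}<a\}
 \le\liminf_n\PP\{n^{-2/3}L_n<a\}
 \le\limsup_n\PP\{n^{-2/3}L_n\le a\}
 \le\PP\{\xi_1^{(\lambda-\varepsilon)}\le a\}.
\]

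\textbf{Main obstacle.} It remains to let $\varepsilon\downarrow0$, which needs continuity in $\lambda$ of the law of $\xi_1^{(\lambda)}$ at continuity points $a$. We construct all $W_{d,\mu}$ from the same Brownian motion $B$. Then $W_{d,\mu}\to W_{d,\lambda}$ uniformly on compacts as $\mu\to\lambda$, and the reflected paths converge too. Almost surely, the excursions of $\R(W_{d,\lambda})$ of length at least $\eta$ on $[0,T]$ are finitely many with distinct lengths, and their endpoints are strict crossing points. Under these conditions the excursion-continuity argument quoted in the proof of Theorem~\ref{thm:global} gives $\xi_1^{(\mu)}\to\xi_1^{(\lambda)}$ almost surely, provided $\xi_1$ is attained inside $[0,T]$ with high probability uniformly in $\mu$ near $\lambda$. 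That uniformity follows from the strongly negative drift for large $s$, in the form of the bound in Lemma~\ref{lem:cutoff}, which is uniform over compact $\lambda$-sets. Hence $\xi_1^{(\lambda\pm\varepsilon)}\to\xi_1^{(\lambda)}$ in law. At a continuity point $a$ both outer bounds tend to $\PP\{\xi_1^{(\lambda)}\le a\}$, which proves $n^{-2/3}L_n\Rightarrow\xi_1$.

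The $O(1)$ term in~\eqref{eq:window} is harmless: it shifts the means by $O(1)$, which the $\varepsilon n^{2/3}$ margin absorbs.
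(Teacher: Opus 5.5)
Your proposal is correct and follows essentially the paper's own route: a monotone coupling that sandwiches $H_d(n,M)$ between $H_d(n,p_-)$ and $H_d(n,p_+)$ on an event of probability $1-o(1)$ (Chebyshev on the edge counts), Theorem~\ref{thm:global} for the two extreme laws, and continuity in $\lambda$ of the law of $\xi_1$ via a common Brownian motion plus late-excursion control. Your uniform edge ordering with independent binomial cut-offs (instead of i.i.d.\ uniform labels) and your half-line Portmanteau bounds (instead of increasing test functions) are only cosmetic differences.

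Two small slips. First, the identity you need is $N/\bigl((d-1)\binom{n-1}{d-1}\bigr)=n/(d(d-1))$. Second, Lemma~\ref{lem:cutoff} only controls the height $\R(W_{d,\mu})(T)$, not excursions starting after $T$. The uniformity in $\mu$ is cleanest from the pathwise fact that, for $\mu\le\mu'$, every excursion of $\R(W_{d,\mu})$ lies inside one of $\R(W_{d,\mu'})$, because $W_{d,\mu'}(t)-W_{d,\mu'}(l)\ge W_{d,\mu}(t)-W_{d,\mu}(l)$ for $t\ge l$.
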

\begin{proof}
For $\theta\in\mathbb R$, put
\[
 m_n(\theta)=\left\lfloor\frac{n}{d(d-1)}(1+\theta n^{-1/3})\right\rfloor,
 \qquad p_n(\theta)=\frac{m_n(\theta)}N.
\]
Fix $\varepsilon>0$.
Assign independent uniform $[0,1]$ variables $U_E$ to the $N$
possible hyperedges and order the edges by increasing values of
$U_E$. For $p\in[0,1]$, put
\[
 H(p)=\{E:U_E\le p\},\qquad E(p)=|H(p)|.
\]
Then $H(p)$ has exactly the law $H_d(n,p)$; the edge order is a
uniform permutation; and conditionally on $E(p)=r$, $H(p)$ is
uniform among hypergraphs with $r$ edges.
On the same space, the hypergraph formed by the first $M$ edges
has exactly the law $H_d(n,M)$.

Put $p_-=p_n(\lambda-\varepsilon)$, $p_+=p_n(\lambda+\varepsilon)$
and $E_-=E(p_-)$, $E_+=E(p_+)$.
Simultaneously, $H(p_-)\subseteq H(p_+)$ and $E_-\le E_+$.
It follows that
\[
 \EE E_\pm=m_n(\lambda\pm\varepsilon),\qquad
 \Var(E_\pm)=Np_\pm(1-p_\pm)=O(n).
\]
Since $M=m_n(\lambda)+O(1)$ and
\[
 m_n(\lambda\pm\varepsilon)-M
       =\pm\frac{\varepsilon}{d(d-1)}n^{2/3}+O(1),
\]
Chebyshev's inequality, or a Chernoff bound, gives
\[
 \PP\{E_-\le M\le E_+\}\longrightarrow1,
\]
because the gap between their means and $M$ has order $n^{2/3}$,
whereas their standard deviation has order $n^{1/2}$.
On this event, the coupling by the uniform order satisfies
\[
 H(p_-)\subseteq H_d(n,M)\subseteq H(p_+),
\]
and consequently
\[
 L_n(\lambda-\varepsilon)\le L_n(M)\le L_n(\lambda+\varepsilon).
\]

Theorem~\ref{thm:global} identifies the two extreme laws.
Under a common Brownian motion, $W_{d,\lambda\pm\varepsilon}$
converges uniformly on every compact interval to $W_{d,\lambda}$
as $\varepsilon\downarrow0$.
The late-excursion control used in the preceding proof makes this
continuity global; the longest excursion is almost surely unique.
Thus
\[
 \xi_1(\lambda\pm\varepsilon)\Longrightarrow\xi_1(\lambda).
\]
For every continuous bounded increasing function $f$, the coupling gives
\[
 \begin{split}
 \EE f(n^{-2/3}L_n(\lambda-\varepsilon))-o(1)
 &\le\EE f(n^{-2/3}L_n(M))\\
 &\le\EE f(n^{-2/3}L_n(\lambda+\varepsilon))+o(1).
 \end{split}
\]
First let $n$ tend to infinity, then $\varepsilon\downarrow0$.
Continuous bounded increasing functions determine laws on
$\mathbb R$, completing the proof.
\end{proof}

\begin{remark}[Absence of local conditioning]
The preceding proof never divides by $\PP\{E(p)=M\}$ and therefore
requires no binomial local theorem.
It uses only monotonicity of the largest component size under edge
addition, concentration of the two edge counts, and continuity
in $\lambda$ of the limiting law.
The argument applies unchanged to every fixed integer $d\ge2$.
\end{remark}
Lemma~\ref{lem:finite} and Theorem~\ref{thm:invariance} close
finite-horizon convergence without hypotheses.
This result does not suffice for the infinite ordered sequence of
excursions: tightness beyond every horizon and control of
component starts are still needed.

\section{Wright normalization and identification with Pittel}
\subsection{Correct normalization of Wright's constants}
Two families bear the name Wright constants. The coefficients $c_\ell$
of the formal kernel series have factorial growth~\cite{Jan07}.
The normalized coefficients $\gamma_\ell$ (written $\gamma_{\ell,d}$
when one wishes to recall the dependence on $d$; for $d=2$ the two
notations coincide), which enter the critical law, are defined by
\begin{equation}\label{eq:gamma-def}
 C_2(k,k+\ell)=(1+o(1))\gamma_\ell k^{k-1/2+3\ell/2}\qquad(k\to\infty),
\end{equation}
and satisfy~\cite{Wri77,Wri78,Wri80,VKM09}
\begin{equation}\label{eq:gamma-growth}
 \gamma_\ell=(1+o(1))\sqrt{\frac3{4\pi}}
                         \left(\frac e{12\ell}\right)^{\ell/2}.
\end{equation}
Consequently,
\begin{equation}\label{eq:G}
 G(s)=\sum_{\ell\ge-1}\gamma_\ell s^\ell
\end{equation}
converges absolutely for every $s\ne0$.
This series is related to the moment generating function of Brownian
excursion area~\cite{Spe97,VKM09}.
The factor $\Gamma(3\ell/2)$ appears in the coefficient asymptotics
(Theorem~\ref{thm:wright}), where it occurs as
$b_{\ell,d}/\Gamma(3\ell/2)$, and this term is precisely
$\gamma_\ell\cdot(1+o(1))$.
It is this cancellation between the factorial growth of $b_{\ell,d}$
and the factor $\Gamma(3\ell/2)$ that produces a $\gamma_\ell$ with
subexponential growth~\eqref{eq:gamma-growth}, making series~\eqref{eq:G}
entire. The summation criterion~\eqref{eq:excess-bound}, meanwhile,
uses $\Gamma(\ell/2+1)$, a weaker factor that would suffice for the
Weierstrass test. This uniform estimate is neither asserted nor used
in the proof of the main theorem.

\begin{proposition}[Compatibility of the critical normalizations]\label{prop:normalization}
Put $h=d-1$ and
\[
 m_n(\theta)=\left\lfloor\frac n{d(d-1)}(1+\theta n^{-1/3})\right\rfloor,
 \qquad p_n(\theta)=\frac{m_n(\theta)}{\binom nd}.
\]
Then, uniformly for $\theta$ in a compact set,
\begin{equation}\label{eq:pn}
 p_n(\theta)=\frac{1+\theta n^{-1/3}+O(n^{-1})}{h\binom{n-1}{d-1}}.
\end{equation}
Consequently, exploration at scale $t=\lfloor sn^{2/3}\rfloor$ has limit
\[
 W_{d,\theta}(s)=\sqrt h\,B(s)+\theta s-\frac h2s^2.
\]
After the time change $s=h^{-1/3}u$,
\begin{equation}\label{eq:brownian-scale}
 W_{d,\theta}(h^{-1/3}u)\overset{\mathrm{law}}=
 h^{1/3}\left\{B(u)+\theta h^{-2/3}u-\frac12u^2\right\}.
\end{equation}
Thus the three normalizations used in the manuscript (critical window,
$n^{2/3}$ scale and Pittel parameters) are compatible without an
additional implicit constant.
\end{proposition}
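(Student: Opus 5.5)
The proof has three parts, taken in order: the expansion \eqref{eq:pn} of $p_n(\theta)$, the identification of the exploration limit by citation, and the Brownian scaling identity \eqref{eq:brownian-scale}.

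\emph{Step 1: the expansion \eqref{eq:pn}.} Write $m_n(\theta)=\frac{n}{dh}(1+\theta n^{-1/3})+O(1)$. The floor costs $O(1)$, which is a relative error $O(n^{-1})$, uniformly for $\theta$ in a compact set. Next use the exact identity
\[
 \binom nd=\frac nd\binom{n-1}{d-1}.
\]
This gives
\[
 p_n(\theta)=\frac{m_n(\theta)}{\binom nd}=\frac{d\,m_n(\theta)}{n\binom{n-1}{d-1}}
 =\frac{1+\theta n^{-1/3}+O(n^{-1})}{h\binom{n-1}{d-1}}.
\]
The implied constant depends only on $d$ and the compact set. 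This is the only place where arithmetic with the window enters. It shows that $p_n(\theta)$ satisfies exactly the calibration assumed at the start of Section~7 and in the corollary on calibration, with $\lambda$ replaced by $\theta$.

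\emph{Step 2: the exploration limit.} With the calibration of Step 1 in hand, I would invoke Theorem~\ref{thm:invariance} with parameter $\theta$. This directly yields $n^{-1/3}S_{\lfloor sn^{2/3}\rfloor}\Rightarrow W_{d,\theta}$ on every $[0,T]$. No new estimate is needed; Lemma~\ref{lem:finite} was proved for any $p$ of this form.

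\emph{Step 3: the scaling identity \eqref{eq:brownian-scale}.} Substitute $s=h^{-1/3}u$ into each of the three terms of $W_{d,\theta}$:
\begin{itemize}
\item Brownian term: by self-similarity $B(h^{-1/3}u)\overset{\mathrm{law}}=h^{-1/6}B(u)$ as processes. Hence $\sqrt h\,B(h^{-1/3}u)\overset{\mathrm{law}}=h^{1/3}B(u)$.
\item Linear term: $\theta h^{-1/3}u=h^{1/3}\cdot\theta h^{-2/3}u$.
\item Quadratic term: $\frac h2h^{-2/3}u^2=h^{1/3}\cdot\frac12u^2$.
\end{itemize}
Factoring out $h^{1/3}$ gives \eqref{eq:brownian-scale}. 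The equality holds in law as processes, because only the Brownian term is random and the time change is deterministic and linear.

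\emph{Consequence for excursion lengths.} Reflection commutes with positive space scaling and with deterministic time changes. The excursion lengths of $\R(W_{d,\theta})$ are therefore $h^{-1/3}$ times those of $\R(B(u)+\mu u-u^2/2)$ with $\mu=\theta h^{-2/3}$. The latter is Aldous's process, whose longest excursion has law $\FP(\cdot,\mu)$. Consequently $\PP\{\xi_1\le a\}=\FP(h^{1/3}a,\theta h^{-2/3})$, which is the compatibility asserted.

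\emph{Main obstacle.} The only delicate point I anticipate is this last identification of Pittel's parameters with Aldous's process $B(u)+\mu u-u^2/2$, with unit variance and the same sign convention for $\mu$. I would check it against the $d=2$ case, where $h=1$ and the statement must reduce to Pittel's and Aldous's theorem for $G(n,M)$ with $M=\frac n2(1+\lambda n^{-1/3})$. That case fixes any potential constant factor and confirms that no additional implicit constant is present.
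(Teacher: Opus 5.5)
Your proposal is correct and follows the paper's proof essentially step for step: the identity $\binom nd=\frac nd\binom{n-1}{d-1}$ together with the relative $O(n^{-1})$ floor error gives \eqref{eq:pn}, Theorem~\ref{thm:invariance} with parameter $\theta$ gives the limit $W_{d,\theta}$, and term-by-term Brownian self-similarity gives \eqref{eq:brownian-scale}. Your closing paragraph on excursion lengths is not needed for this proposition; it is essentially the argument the paper gives separately in the proof of Theorem~\ref{thm:pittel}.
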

\begin{proof}
The identity
\[
 \frac n{d(d-1)}\binom nd^{-1}=\frac1{(d-1)\binom{n-1}{d-1}}
\]
and the rounding error $O(1)$ in $m_n(\theta)$ give~\eqref{eq:pn};
indeed, this error becomes relative $O(n^{-1})$ since $m_n(\theta)$
has order $n$. The conditional expansions~\eqref{eq:drift}--\eqref{eq:variance}
and Theorem~\ref{thm:invariance} then give the displayed process.
Finally, self-similarity
$B(h^{-1/3}u)\overset{\mathrm{law}}=h^{-1/6}B(u)$ implies
\[
 \sqrt h\,B(h^{-1/3}u)=h^{1/3}B(u),\qquad
 \theta h^{-1/3}u=h^{1/3}(\theta h^{-2/3}u),
\]
and $(h/2)h^{-2/3}u^2=h^{1/3}u^2/2$, proving~\eqref{eq:brownian-scale}.
\end{proof}

\subsection{Exact reduction of the hypergraph law to Pittel's law}
Let $\FP(x,\mu)$ denote the distribution function of the longest
excursion interval of
\[
 B(t)+\mu t-\frac12t^2.
\]
Pittel~\cite[equation (3.25)]{Pit01} gives an explicit integral
representation and proves that it is the limiting law of the largest
component of the critical graph. This notation fixes the published
integral without ambiguity, including its amplitude and contour, and
avoids reconstructing these constants from a merely formal phase.

\begin{theorem}[Identification with Pittel's law by Brownian self-similarity]\label{thm:pittel}
For every $d\ge2$, $a>0$ and $\lambda\in\mathbb R$, putting $h=d-1$,
the limit variable of Theorem~\ref{thm:excursions} satisfies
\begin{equation}\label{eq:pittel}
 \PP\{\xi_1\le a\}=\FP(h^{1/3}a,\lambda h^{-2/3}).
\end{equation}
Thus, after the substitution
\[
 a_{\mathrm{Pit}}=h^{1/3}a,\qquad \lambda_{\mathrm{Pit}}=\lambda h^{-2/3},
\]
Pittel's integral representation identifies the limit law for every
$d\ge2$, without any additional hypothesis on hypergraph kernels.
\end{theorem}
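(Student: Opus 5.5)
The proof is a pathwise rescaling argument that rests on Proposition~\ref{prop:normalization}. By Theorem~\ref{thm:excursions}, $\xi_1$ is the length of the longest excursion of $\R(W_{d,\lambda})$ away from zero. By the definition recalled just before the statement, $\FP(x,\mu)$ is the distribution function of the longest excursion of $\R(X_\mu)$, where $X_\mu(t)=B(t)+\mu t-\frac12 t^2$. So it suffices to relate the excursion structure of $W_{d,\lambda}$ to that of $X_\mu$ with $\mu=\lambda h^{-2/3}$.

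The first step is to upgrade \eqref{eq:brownian-scale} from a one-dimensional identity to an identity in law of processes on $C[0,\infty)$. Set $\widetilde B(u)=h^{1/6}B(h^{-1/3}u)$. By Brownian scaling, $\widetilde B$ is again a standard Brownian motion, and pathwise
\[
 W_{d,\lambda}(h^{-1/3}u)=h^{1/3}\bigl\{\widetilde B(u)+\lambda h^{-2/3}u-\tfrac12u^2\bigr\}
 =h^{1/3}X_{\mu}(u),
\]
where $X_\mu$ is built from $\widetilde B$. The computation is the one already done in the proof of Proposition~\ref{prop:normalization}; the only new point is that it holds simultaneously for all $u$ on a single probability space.

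The second step is to pass through the reflection map. Multiplication by the constant $h^{1/3}>0$ and the deterministic time change $u\mapsto h^{-1/3}u$ both commute with $\R$, since the running infimum transforms in the same way. Hence
\[
 \R(W_{d,\lambda})(h^{-1/3}u)=h^{1/3}\R(X_\mu)(u)\quad\text{for all }u .
\]
The zero sets therefore correspond under $u\mapsto h^{-1/3}u$. Every excursion interval of $\R(X_\mu)$ of length $\ell$ becomes an excursion interval of $\R(W_{d,\lambda})$ of length $h^{-1/3}\ell$, the ordering of lengths is preserved, and so $\xi_1=h^{-1/3}\xi_1^{\mathrm{Pit}}$ almost surely.

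It follows that $\PP\{\xi_1\le a\}=\PP\{\xi_1^{\mathrm{Pit}}\le h^{1/3}a\}=\FP(h^{1/3}a,\lambda h^{-2/3})$. This is \eqref{eq:pittel}, with the substitution $a_{\mathrm{Pit}}=h^{1/3}a$, $\lambda_{\mathrm{Pit}}=\lambda h^{-2/3}$ as stated.

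The main obstacle is not the scaling, which is elementary, but making sure that the $\FP$ defined as the longest-excursion law of $B(t)+\mu t-t^2/2$ really coincides with Pittel's published function in \cite{Pit01}. For this I would invoke Aldous' result \cite{Ald97}, applied to $d=2$. In that case $h=1$ and the present setting is exactly the Erd\H{o}s--R\'enyi critical window. Aldous shows that the ordered component sizes converge to the excursion lengths of $\R(X_\lambda)$, and Pittel proves that the largest component has his limit law, so the two distribution functions agree. A secondary point is that the longest excursion of $\R(X_\mu)$ is a.s.\ finite, unique and attained. This follows from the parabolic drift and from the property, already used in Theorem~\ref{thm:global}, that excursion lengths are almost surely distinct.
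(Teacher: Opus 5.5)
Your proposal is correct and follows essentially the same route as the paper. Both use the Brownian rescaling of Proposition~\ref{prop:normalization}, note that $\R$ commutes with the positive vertical factor $h^{1/3}$ and the linear time change $u\mapsto h^{-1/3}u$ (so $\xi_1(d,\lambda)\overset{\mathrm{law}}=h^{-1/3}\xi_1(2,\lambda h^{-2/3})$), and then invoke the $d=2$ identification with Pittel's law; your pathwise construction with $\widetilde B(u)=h^{1/6}B(h^{-1/3}u)$, and your direct check that the running infimum rescales, simply make explicit what the paper obtains by citing invariance of reflection under deterministic time changes.
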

\begin{proof}
Proposition~\ref{prop:normalization}, with $\theta=\lambda$, gives
exactly rescaling~\eqref{eq:brownian-scale}.
Vertical multiplication by the positive constant $h^{1/3}$ does not
change the zeros or excursion intervals of the reflection.
The time change $s=h^{-1/3}t$ is a deterministic increasing Lipschitz
map; the Skorokhod reflection operator commutes with continuous
deterministic time changes (see~\cite[Chapter VI]{EthierKurtz86}),
so excursion lengths are multiplied by $h^{-1/3}$ without altering
their order or distribution. Consequently,
\[
 \xi_1(d,\lambda)\overset{\mathrm{law}}=
 h^{-1/3}\xi_1(2,\lambda h^{-2/3}).
\]
The case $d=2$ is precisely the variable identified by Pittel's
integral formula~\cite{Pit01}; this gives~\eqref{eq:pittel}.
\end{proof}

\begin{remark}[Attribution of the integral equality]
Identification of the limit law with Pittel's integral formula rests
on two distinct contributions.
The equality between the law of the largest excursion of
$B(t)+\mu t-\frac12t^2$ and the limit of the largest component of the
critical random graph is due to Pittel~\cite{Pit01} and Aldous~\cite{Ald97}.
The specific contribution of the present article lies in the rescaling
factors $h^{1/3}$ and $\lambda h^{-2/3}$ that extend the formula to
$d$-uniform hypergraphs for every $d\ge2$, with $h=d-1$.
\end{remark}

\begin{corollary}[Main theorem by the probabilistic route]\label{cor:main}
For every fixed integer $d\ge2$, every $\lambda\in\mathbb R$ and
every integer sequence
\[
 M=\frac n{d(d-1)}(1+\lambda n^{-1/3})+O(1),
\]
if $L_n$ is the size of the largest component of $H_d(n,M)$, then,
at every continuity point $a>0$ of the distribution function,
\begin{equation}\label{eq:main-closed}
 \lim_{n\to\infty}\PP\{n^{-2/3}L_n\le a\}
 =\FP\bigl((d-1)^{1/3}a,\lambda(d-1)^{-2/3}\bigr).
\end{equation}
\end{corollary}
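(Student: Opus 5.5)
The plan is to chain the two probabilistic results already established. Fix $d\ge2$, $\lambda\in\mathbb R$ and a sequence $M$ as in~\eqref{eq:window}, and put $h=d-1$.

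First, Theorem~\ref{thm:fixed} (equivalently the second assertion of Theorem~\ref{thm:excursions}) gives the weak convergence $n^{-2/3}L_n\Longrightarrow\xi_1$ in $H_d(n,M)$. Here $\xi_1$ is the longest excursion length of $\R(W_{d,\lambda})$. That theorem was proved for the sequence $m_n(\lambda)+O(1)$, which is exactly the hypothesis on $M$. By the Portmanteau theorem, weak convergence on $\mathbb R$ yields $\PP\{n^{-2/3}L_n\le a\}\to\PP\{\xi_1\le a\}$ at every continuity point $a$ of the distribution function of $\xi_1$.

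Second, Theorem~\ref{thm:pittel} identifies this distribution function: $\PP\{\xi_1\le a\}=\FP(h^{1/3}a,\lambda h^{-2/3})$ for every $a>0$. The identification uses the Brownian rescaling~\eqref{eq:brownian-scale} of Proposition~\ref{prop:normalization}, which gives $\xi_1(d,\lambda)\overset{\mathrm{law}}=h^{-1/3}\xi_1(2,\lambda h^{-2/3})$. Since this is an equality of distribution functions, the continuity points of $a\mapsto\PP\{\xi_1\le a\}$ coincide with those of $a\mapsto\FP(h^{1/3}a,\lambda h^{-2/3})$. Hence the phrase ``continuity point of the distribution function'' in~\eqref{eq:main-closed} is unambiguous. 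Substituting gives~\eqref{eq:main-closed}, and therefore Theorem~\ref{thm:main}.

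The only delicate point is bookkeeping of hypotheses rather than new estimates. Two things need checking.
\begin{enumerate}
\item The $p$ used in Theorem~\ref{thm:global} must match the calibration~\eqref{eq:pn}, so that the limit process in Theorem~\ref{thm:fixed} is exactly $W_{d,\lambda}$ with the same $\lambda$ as in the window. Proposition~\ref{prop:normalization} supplies this.
\item $\FP(x,\mu)$ must be the law of the longest excursion of $B(t)+\mu t-t^2/2$, which is how it was fixed in Section~8.
\end{enumerate}

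If one wants more than convergence at continuity points, one can add that the limit law is in fact continuous. Pittel's integral formula gives a density, so $\FP$ is continuous in $x$. In that case, the convergence holds at every $a>0$.
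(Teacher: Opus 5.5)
Your proposal is correct and is essentially the paper's own proof. Theorem~\ref{thm:fixed}, which already packages the invariance principle, the late-excursion control of Lemma~\ref{lem:late} and Theorem~\ref{thm:global}, and the monotone coupling, gives $n^{-2/3}L_n\Rightarrow\xi_1$; Theorem~\ref{thm:pittel} then identifies the law of $\xi_1$ via the rescaling~\eqref{eq:brownian-scale}, and the Portmanteau theorem gives convergence at continuity points, with your checks on the calibration~\eqref{eq:pn} and on the definition of $\FP$ being exactly the implicit consistency conditions (your closing remark on continuity of $\FP$ is an optional strengthening, not needed for the statement).
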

\begin{proof}
Theorem~\ref{thm:invariance} gives convergence of the exploration
process on every compact interval.
Lemma~\ref{lem:late} and Theorem~\ref{thm:global} ensure tightness
of the ordered excursions and passage to $\ell^2_\downarrow$.
Theorem~\ref{thm:fixed} transfers the longest excursion from the
binomial model to the $M$-edge model.
Finally, Theorem~\ref{thm:pittel} applies Brownian self-similarity and
gives exactly the right-hand side of~\eqref{eq:main-closed}.
The Portmanteau theorem turns convergence in distribution into
convergence of distribution functions at continuity points.
\end{proof}

\begin{remark}[Precise meaning of ``without an analytic hypothesis'']
This expression means only that the proof of Corollary~\ref{cor:main}
does not use the hypergraph contour derivation or the series $g_d$.
It does not, however, dispense with verification of the probabilistic
estimates in Lemmas~\ref{lem:finite} and~\ref{lem:late}, excursion
continuity, and transfer to the fixed-edge model, which are the
explicit ingredients of the proof.
\end{remark}

\begin{corollary}[A differential equation for verification]
Setting
\[
 \Phi(x;\mu)=\frac1{x\sqrt{2\pi}}G(x^{3/2})
       \exp\left\{-\frac{\mu^3}{6}+\frac{(\mu-x)^3}{6}\right\},
\]
one obtains
\begin{equation}\label{eq:ode}
 \partial_x\FP(x,\mu)=\FP(x,\mu-x)\Phi(x;\mu).
\end{equation}
This identity, proved by the local theorem of~\cite{VKM09},
provides an independent check on the normalization and positivity
of the density.
\end{corollary}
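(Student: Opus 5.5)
The plan is to derive \eqref{eq:ode} in the graph case $d=2$ only, since $\FP$ is defined through $d=2$. I would use a first-component decomposition at finite $n$ and pass to the limit along the lines of \cite{VKM09}. Fix $x>0$ and $\mu$, put $k=\lfloor xn^{2/3}\rfloor$, and take $M=\frac n2(1+\mu n^{-1/3})+O(1)$. The event $\{L_n=k\}$ is, up to an event of negligible probability (two components of size exactly $k$), the event that there is a component of size $k$ and all other components have size at most $k$. Counting labelled graphs exactly, as in Theorem~\ref{thm:exact}, gives
\[
 \PP\{L_n=k\}\approx\sum_{\ell\ge-1}\binom nk\,C_2(k,k+\ell)\,
 \frac{\binom{N'}{M'}}{\binom NM}\,
 \PP'\{L_{n-k}\le k\},
\]
where $N'=\binom{n-k}2$, $M'=M-k-\ell$, and $\PP'$ refers to $G(n-k,M')$.

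Step 1 (the residual graph). I would check that $G(n-k,M')$ lies in the critical window with parameter $\mu'$ defined by $M'=\frac{n-k}2(1+\mu'(n-k)^{-1/3})$. This gives $\mu'=\mu-x+o(1)$, uniformly for $\ell=o(n^{1/3})$. The threshold $k$ becomes $x(1+o(1))(n-k)^{2/3}$. By Corollary~\ref{cor:main} with $d=2$, and continuity of $\FP$, the last factor then converges to $\FP(x,\mu-x)$. This is where the shift $\mu\mapsto\mu-x$ in \eqref{eq:ode} comes from.

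Step 2 (the weight of the big component). Insert the Wright asymptotics \eqref{eq:gamma-def} for $C_2(k,k+\ell)$. Expand $\binom nk\binom{N'}{M'}/\binom NM$ by Stirling's formula, performing the cubic cancellation exactly as in Theorem~\ref{thm:cancel} and its corollary. The $\ell$-dependence should combine with $k^{3\ell/2}$ into $\gamma_\ell(k^{3/2}/n)^\ell\to\gamma_\ell x^{3\ell/2}$. The surviving terms should be the prefactor $(x\sqrt{2\pi})^{-1}n^{-2/3}$ and the exponential $\exp\{-\mu^3/6+(\mu-x)^3/6\}$. Summing over $\ell$ then gives $n^{2/3}\PP\{L_n=k\}\to\FP(x,\mu-x)\Phi(x;\mu)$, with $G(x^{3/2})$ from \eqref{eq:G}. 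A local limit statement of this kind implies \eqref{eq:ode} by summing over $k\le an^{2/3}$, which converts the density into $\partial_x\FP$.

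Main obstacle. The hard part is interchanging $\lim_n$ with $\sum_\ell$, which needs a bound on $C_2(k,k+\ell)$ that holds uniformly in $\ell$. Fixed-$\ell$ Wright asymptotics are not enough here; this is exactly the gap the paper flags for $d>2$. For $d=2$, however, Bollobás' bound $C_2(k,k+\ell)\le c\,(\ell)^{-\ell/2}k^{k-1/2+3\ell/2}$ is available and dominates \eqref{eq:gamma-growth}. I would first use it to truncate at $\ell\le L$ with an error uniform in $n$, and then apply fixed-$\ell$ Wright asymptotics. For the very large excess regime, say $\ell\ge n^{\epsilon}$, I would use a crude global count of connected graphs, which decays superexponentially relative to the main term. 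A second, smaller point is uniformity of the convergence in Step 1 over the shifted parameters. This follows from the monotone coupling in Theorem~\ref{thm:fixed} together with monotonicity of $\FP$ in both arguments.
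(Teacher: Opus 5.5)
The paper gives no proof of this corollary; it only attributes it to the local limit theorem of \cite{VKM09}. Your first-component decomposition is precisely that argument, transposed to $G(n,M)$: a designated component of size $k$ and excess $\ell$, the residual graph in the window with parameter $\mu-x$, Wright's fixed-$\ell$ asymptotics, and Bollob\'as domination of the excess sum. Your bookkeeping is right. The tree term gives $n^{-2/3}(2\pi)^{-1/2}x^{-5/2}e^{-\mu^3/6+(\mu-x)^3/6}$, and excess $\ell$ multiplies it by $\gamma_\ell x^{3(\ell+1)/2}$. For bounded $\ell$, the hypergeometric weight $\binom{N'}{M'}/\binom NM$ differs from its binomial counterpart only by a factor $1+O(n^{-1/3})$.

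One step fails as written: the crude global count does not handle $\ell\ge n^{\epsilon}$. Ignoring connectivity already overcounts at bounded excess by a factor of order $(e/2)^k$; compare $\binom{\binom k2}{k}$ with $C_2(k,k)\sim\sqrt{\pi/8}\,k^{k-1/2}$. Precisely, for $\ell\le\sqrt k$ and $k=xn^{2/3}$,
\[
 \frac{\binom{\binom k2}{k+\ell}}{k^{k-2}}\,n^{-(\ell+1)}
 \asymp\Bigl(\frac e2\Bigr)^{k}x^{3/2}\Bigl(\frac{x}{2n^{1/3}}\Bigr)^{\ell},
\]
whereas the true relative weight is $\gamma_\ell x^{3(\ell+1)/2}$. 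So the crude bound is $e^{\Theta(n^{2/3})}$ throughout $n^{\epsilon}\le\ell\le c\,n^{2/3}/\log n$, for some $c=c(x)>0$. It only becomes effective once $\ell$ is of order $n^{2/3}/\log n$, and comfortably so for $\ell\ge k$.

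The repair uses tools you already have. Bollob\'as' bound holds on the whole range $1\le\ell\le\binom k2-k$, so apply it to the entire tail $\ell>L$. Beyond the range of your Stirling expansion, control the edge weight by the elementary monotonicity
\[
 \binom{N'}{j-1}=\frac{j}{N'-j+1}\binom{N'}{j}\le\frac{M}{N'-M}\binom{N'}{j}
 \quad(j\le M),\qquad
 \frac{M}{N'-M}=\frac1n\bigl(1+O(n^{-1/3})\bigr),
\]
and bound the residual probability by $1$. This gives $Cn^{-2/3}\ell^{-\ell/2}(Cx^{3/2})^{\ell}$ for the excess-$\ell$ term, uniformly for large $n$ and $x$ in compact subsets of $(0,\infty)$. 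That is summable in $\ell$, so no crude count is needed, and with this change the argument closes.

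Two smaller points arise when you sum the local limit over $k\le an^{2/3}$. First, make sure the convergence is uniform for $x$ in compacts; your monotonicity argument supplies this for the residual factor. Second, use $\FP(\delta,\mu)\to0$ as $\delta\downarrow0$ and joint continuity of $\FP$ (e.g.\ from Pittel's formula) to identify the continuous density with $\partial_x\FP$.
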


\section{The $d$-uniform Bollob\'as bound and the analytic domain}
The Bollob\'as bound is formulated here with the proved quantifiers,
in two regimes: Wright for each fixed excess, combinatorial throughout
the domain. It does not enter the proof of Theorem~\ref{thm:main}.

\begin{proposition}[Verified Puiseux expansion]
Let $h=d-1$ and let $T_d(x)$ be defined by $T_d=x\exp(T_d^h/h!)$.
If $\tau_d$ satisfies $h\tau_d^h/h!=1$ and
$\rho_d=\tau_d\exp(-\tau_d^h/h!)$, then
\[
 T_d(x)=\tau_d-\tau_d\sqrt{\frac2h}\sqrt{1-x/\rho_d}+O(1-x/\rho_d).
\]
\end{proposition}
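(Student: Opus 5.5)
This is the same computation as Proposition~\ref{prop:puiseux}, now with the constants written in the $h=d-1$ normalization. The plan is to invert the explicit function $x(t)=t\exp(-t^h/h!)$ near its positive critical point. First I will check that the constants agree with the earlier ones. The condition $h\tau_d^h/h!=1$ means $\tau_d^h=(h-1)!=(d-2)!$, so $\tau_d$ is the same as before. Then $\tau_d^h/h!=1/h$, which gives $\rho_d=\tau_de^{-1/h}$, again matching Proposition~\ref{prop:puiseux}.

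Next, the functional equation $T_d=x\exp(T_d^h/h!)$ says exactly that $x=x(T_d)$. We have
\[
 x'(t)=\exp(-t^h/h!)\bigl(1-t^h/(h-1)!\bigr),
\]
which vanishes on the positive axis only at $t=\tau_d$. For $0<t<\tau_d$ the map $x$ is increasing from $0$ to $\rho_d$. The branch $T_d$ with $T_d(0)=0$ is its inverse, and it is analytic on $[0,\rho_d)$ by the implicit function theorem.

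For the second derivative, differentiate $x'(t)=e^{-t^h/h!}\phi(t)$ with $\phi(t)=1-t^h/(h-1)!$. Since $\phi(\tau_d)=0$, only one term survives at the critical point:
\[
 x''(\tau_d)=e^{-1/h}\phi'(\tau_d)=-e^{-1/h}\,h\tau_d^{h-1}/(h-1)!=-e^{-1/h}h/\tau_d.
\]
Dividing by $\rho_d$ gives $x''(\tau_d)/\rho_d=-h/\tau_d^2$. Taylor's formula then yields
\[
 1-x/\rho_d=\frac{h}{2\tau_d^2}(t-\tau_d)^2+O\bigl((t-\tau_d)^3\bigr).
\]
Solving on the branch $t<\tau_d$, where $\tau_d-t>0$, gives
\[
 \tau_d-t=\tau_d\sqrt{2/h}\,\sqrt{1-x/\rho_d}\,\bigl(1+O(\sqrt{1-x/\rho_d})\bigr),
\]
and this is the stated expansion.

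The only delicate point is justifying the $O(1-x/\rho_d)$ remainder rather than merely $o(\sqrt{\cdot})$. I would handle it with the standard square-root inversion. Write
\[
 1-x/\rho_d=\psi(t)^2,\qquad \psi(t)=(\tau_d-t)\sqrt{g(t)},
\]
where $g$ is analytic with $g(\tau_d)=h/(2\tau_d^2)>0$; this uses the analyticity of $x$ and the simple-zero structure of $x'$ at $\tau_d$. Then $\psi$ is analytic with $\psi'(\tau_d)\neq0$. The analytic inverse function theorem gives $t$ as an analytic function of $\psi=\sqrt{1-x/\rho_d}$, and the first two Taylor coefficients of that function are the ones computed above. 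The remainder is therefore quadratic in $\sqrt{1-x/\rho_d}$, which is what the statement claims.
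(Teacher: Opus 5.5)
Your proposal is correct and follows the same route as the paper: you invert $x(t)=t\exp(-t^h/h!)$ at its positive critical point, using $x'(\tau_d)=0$, $x''(\tau_d)/\rho_d=-h/\tau_d^2$ and the quadratic Taylor expansion of $1-x/\rho_d$. The paper cites the singular inversion theorem of Flajolet--Sedgewick (and mentions continuation to a dented domain) for the remainder, whereas you prove the $O(1-x/\rho_d)$ bound directly by writing $1-x/\rho_d=\psi(t)^2$ and inverting $\psi$ analytically, which is the content of that cited theorem.
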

\begin{proof}
Put $x(T)=T\exp(-T^h/h!)$. We have $x'(\tau_d)=0$ and
\[
 \frac{x''(\tau_d)}{\rho_d}=-\frac h{\tau_d^2}.
\]
Taylor's formula gives
$1-x/\rho_d=\frac{h}{2\tau_d^2}(T-\tau_d)^2+O((T-\tau_d)^3)$.
Inversion on the real increasing branch, followed by continuation
into a dented domain, gives the stated expansion.
It agrees with the singular inversion scheme of~\cite[Theorem VI.6]{FS09}.
\end{proof}

\begin{theorem}[$d$-uniform Bollob\'as bound]\label{thm:bollobas}
Fix $d\ge2$ and put $h=d-1$.

\emph{(i) Wright regime.}
For each fixed integer $\ell\ge1$, there exist $K_{d,\ell}<\infty$
and $v_0(d,\ell)$ such that, for every $v\ge v_0(d,\ell)$ satisfying
$v+\ell\equiv0\pmod h$,
\begin{equation}\label{eq:wright-bound}
 C_d\left(v,\frac{v+\ell}{h}\right)
       \le K_{d,\ell}v!\rho_d^{-v}v^{3\ell/2-1}.
\end{equation}

\emph{(ii) Global domain.}
For all admissible $v\ge1$ and $\ell\ge1$, putting $q=(v+\ell)/h$,
\begin{equation}\label{eq:global}
 C_d(v,q)\le\binom{\binom vd}{q}
         \le\left(\frac{e\binom vd}{q}\right)^q.
\end{equation}
\end{theorem}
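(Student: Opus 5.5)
Part (ii) is the easy half. I will prove it first and directly. A connected $d$-uniform hypergraph on a fixed labelled set of $v$ vertices with $q$ edges is in particular a $q$-element subset of the $\binom vd$ possible $d$-sets. Hence $C_d(v,q)\le\binom{\binom vd}{q}$, with the convention that the binomial vanishes when $q>\binom vd$. The second inequality is the standard estimate $\binom Nq\le N^q/q!\le(eN/q)^q$, which follows from $q!\ge(q/e)^q$, itself a consequence of $e^q\ge q^q/q!$. No use of connectedness, of the excess, or of admissibility beyond $q=(v+\ell)/h$ being an integer is needed.

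For part (i) I will invoke Theorem~\ref{thm:wright}. Its conclusion gives
\[
 C_d\bigl(v,(v+\ell)/h\bigr)=\frac{(d-1)b_{\ell,d}}{\Gamma(3\ell/2)}\,v!\rho_d^{-v}v^{3\ell/2-1}\bigl(1+O(v^{-1/2})\bigr).
\]
So for $v\ge v_0(d,\ell)$ the factor $1+O(v^{-1/2})$ is at most $2$, and we may take $K_{d,\ell}=2(d-1)b_{\ell,d}/\Gamma(3\ell/2)$. The task is therefore to supply the hypothesis of Theorem~\ref{thm:wright} for each fixed $\ell$. That hypothesis is a local singular expansion of $A_{\ell,d}$ at the $d-1$ rotated dominant singularities with exponent $-3\ell/2$, with no other dominant singularity. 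An upper bound only needs a $\Delta$-domain estimate $A_{\ell,d}(x)=O((1-x/\rho_d)^{-3\ell/2})$ together with analyticity elsewhere on $|x|\le\rho_d$, followed by the $O$-transfer of \cite{FS09}.

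To obtain this I will use the kernel decomposition for fixed excess $\ell\ge1$. Every connected component of excess $\ell$ is built from a finite set of kernels: incidence-graph multigraphs with minimum degree at least $3$ and cyclomatic number $\ell+1$. The number of such kernel shapes is finite for fixed $\ell$, since the number of kernel vertices and edges is bounded linearly in $\ell$. Each kernel is decorated by substituting paths of hyperedges, with generating function of the form $1/(1-Y_d)$, along kernel edges, and rooted hypertrees $T_d$ at vertices, exactly as in the proof of Theorem~\ref{thm:unicyclic}. This makes $A_{\ell,d}$ a finite sum of rational functions in $T_d$ and $Y_d$. By the computation in Proposition~\ref{prop:V-asym}, $1-Y_d\asymp\sqrt{1-x/\rho_d}$. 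A kernel with $k$ edges therefore contributes $O((1-x/\rho_d)^{-k/2})$, and the $3$-regular (cubic) kernels, which maximise $k$ at $3\ell$ for cyclomatic excess $\ell$, give the exponent $-3\ell/2$.

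Singularities at other points come only from $T_d$, whose singularities on $|x|=\rho_d$ are the rotations described in Proposition~\ref{prop:puiseux}. The factor $1-Y_d$ vanishes only there, because $|Y_d|<1$ elsewhere in the closed disc. The main obstacle is doing this kernel reduction honestly for $d\ge3$: hyperedges must be kept whole, and $d$-edges sharing two or more vertices give kernel configurations of the kind behind the $Y_d^2/4$ term. I will first try to show that these degenerate configurations only produce kernels with fewer path-substitution slots, hence smaller exponents. Since only an upper bound with an $\ell$-dependent constant is claimed, positivity of $b_{\ell,d}$ and any uniformity in $\ell$ can be ignored.
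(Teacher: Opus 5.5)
Part (ii) is the paper's argument verbatim, and so is your first step in (i). The paper applies Theorem~\ref{thm:wright}, chooses $v_0(d,\ell)$ so that the relative remainder is at most $1$, and sets $K_{d,\ell}$ equal to twice the leading constant, which it calls $\gamma_{\ell,d}$. The paper stops there. But Theorem~\ref{thm:wright} is conditional on a local singular expansion of $A_{\ell,d}$, with $b_{\ell,d}>0$, and the paper never establishes this for $d\ge3$. So the paper's proof of (i) is short but in fact conditional, and your additional step is what makes it unconditional.

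Your route also asks for less than that hypothesis: an $O\bigl((1-x/\rho_d)^{-3\ell/2}\bigr)$ bound in $\Delta$-domains at the $h$ points $\omega\rho_d$ with $\omega^h=1$, followed by $O$-transfer. It needs no exact expansion and no positivity, and after enlarging $K_{d,\ell}$ it gives the bound for every $v\ge1$. The facts you rely on are correct:
\begin{itemize}
\item the support of $T_d$ is $\{1+he:e\ge0\}$, so $|Y_d(x)|<1$ on $|x|\le\rho_d$ except at those $h$ points;
\item $T_d(\omega x)=\omega T_d(x)$ reduces $\Delta$-analyticity to the single point $\rho_d$;
\item a multigraph of minimum degree $3$ and cyclomatic number $\ell+1$ has at most $3\ell$ edges.
\end{itemize}

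Since your kernels already live in the incidence graph, the obstacle you flag is not a real one.
\begin{itemize}
\item Hyperedges are kept whole automatically there. An edge-node of core degree $k\in\{3,\dots,d\}$ is simply a kernel node carrying the analytic factor $T_d^{d-k}/(d-k)!$.
\item Hyperedges sharing two or more vertices are just short cycles of a simple bipartite graph. They appear either as small-$j$ terms of the path series (for example $\sum_{j\ge0}Y_d^j$ between a vertex-node and an edge-node), or as kernels with edge-type nodes. The latter still have at most $3\ell$ edges, so there is nothing to prove about ``fewer slots''.
\item The remaining constraints (the incidence graph is simple, and no two edge-nodes have the same neighbourhood) only remove polynomial terms in $T_d$. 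For an upper bound you may drop them, and bound symmetry factors crudely in terms of $\ell$.
\end{itemize}
This is the kind of fixed-excess kernel analysis carried out in~\cite{KL97,AR05}. Those works would be the natural reference for the hypothesis the paper leaves unverified.
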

\begin{proof}
For (i), Theorem~\ref{thm:wright} gives, for fixed $\ell$ and along
admissible classes,
\[
 C_d\left(v,\frac{v+\ell}{h}\right)
 =\gamma_{\ell,d}v!\rho_d^{-v}v^{3\ell/2-1}
                  \bigl(1+O_{d,\ell}(v^{-1/2})\bigr).
\]
It suffices to choose $v_0(d,\ell)$ so that the absolute value of the
relative remainder is at most $1$, then $K_{d,\ell}=2\gamma_{\ell,d}$.
For (ii), every connected hypergraph counted by $C_d(v,q)$ is a choice
of $q$ edges from the $\binom vd$ possible edges.
The second inequality is $\binom Nq\le(eN/q)^q$.
\end{proof}

\begin{remark}[Attribution and scope]
The name ``Bollob\'as bound'' refers to the strategy of
uniform majorization from connected graph enumeration.
Theorem~\ref{thm:bollobas} gives a rigorously valid $d$-uniform
extension in two-regime form.
It does not assert the existence of constants $A_d,B_d$ independent
of $\ell$ in a Wright bound valid up to the dense regime.
Such a formula would be false for $d\ge4$ when $\ell$ has order $v^d$.

The two parts of Theorem~\ref{thm:bollobas} have distinct domains and
functions; they are not combined to announce a uniform Wright-type
bound. Consequently, no unjustified summation over $\ell$ and no
interchange of the operators $\lim$ and $\sum$ occurs in the proof
of the main result.
\end{remark}

\section{Assessment of the two routes}
Corollary~\ref{cor:main} closes the limit law unconditionally by the
probabilistic route: convergence of the exploration process on every
finite horizon (Theorem~\ref{thm:invariance}), tightness of ordered
excursions (Theorem~\ref{thm:global}), transfer to the $M$-edge model
(Theorem~\ref{thm:fixed}), and identification of the law by Brownian
self-similarity (Theorem~\ref{thm:pittel}).
No uniformity in the excess enters.

The analytic route, for its part, produces exact identities and
normalization cancellations that the probabilistic route does not
establish. The formulas in Theorem~\ref{thm:exact},
Proposition~\ref{prop:trees}, and Theorem~\ref{thm:unicyclic} hold
for every fixed $d\ge2$; their constants and normalizations are
verifiable and will serve subsequent work.
What Section 9 delineates is precisely what the analytic route
proves today, and what it does not yet prove without an additional
hypothesis on hypergraph kernels.

\section*{Acknowledgements}

The author is grateful to Professor Vlady Ravelomanana (IRIF, CNRS,
Universit\'e Paris Cit\'e) for his careful reading of an earlier
draft, his precise remarks on the analytic combinatorics arguments,
and his encouragement to pursue the $d$-uniform generalisation beyond
the classical $d=2$ case.

\end{document}